\DeclareMathAlphabet{\mathpzc}{OT1}{pzc}{m}{it}
\begin{document}

\newtheorem{theorem}{\bf Theorem}[section]
\newtheorem{definition}{\bf Definition}[section]
\newtheorem{corollary}{\bf Corollary}[section]
\newtheorem{lemma}{\bf Lemma}[section]
\newtheorem{assumption}{Assumption}
\newtheorem{condition}{\bf Condition}[section]
\newtheorem{proposition}{\bf Proposition}[section]
\newtheorem{definitions}{\bf Definition}[section]
\newtheorem{problem}{\bf Problem}

\newcommand{\skp}{\vspace{\baselineskip}}
\newcommand{\noi}{\noindent}
\newcommand{\osc}{\mbox{osc}}
\newcommand{\lfl}{\lfloor}
\newcommand{\rfl}{\rfloor}

\theoremstyle{remark}
\newtheorem{example}{\bf Example}[section]
\newtheorem{remark}{\bf Remark}[section]

\newcommand{\Amb}{{\mathbb{A}}}
\newcommand{\Bmb}{{\mathbb{B}}}
\newcommand{\Cmb}{{\mathbb{C}}}
\newcommand{\Dmb}{{\mathbb{D}}}
\newcommand{\Emb}{{\mathbb{E}}}
\newcommand{\Fmb}{{\mathbb{F}}}
\newcommand{\Gmb}{{\mathbb{G}}}
\newcommand{\Hmb}{{\mathbb{H}}}
\newcommand{\Imb}{{\mathbb{I}}}
\newcommand{\Jmb}{{\mathbb{J}}}
\newcommand{\Kmb}{{\mathbb{K}}}
\newcommand{\Lmb}{{\mathbb{L}}}
\newcommand{\Mmb}{{\mathbb{M}}}
\newcommand{\Nmb}{{\mathbb{N}}}
\newcommand{\Omb}{{\mathbb{O}}}
\newcommand{\Pmb}{{\mathbb{P}}}
\newcommand{\Qmb}{{\mathbb{Q}}}
\newcommand{\Rmb}{{\mathbb{R}}}
\newcommand{\Smb}{{\mathbb{S}}}
\newcommand{\Tmb}{{\mathbb{T}}}
\newcommand{\Umb}{{\mathbb{U}}}
\newcommand{\Vmb}{{\mathbb{V}}}
\newcommand{\Wmb}{{\mathbb{W}}}
\newcommand{\Xmb}{{\mathbb{X}}}
\newcommand{\Ymb}{{\mathbb{Y}}}
\newcommand{\Zmb}{{\mathbb{Z}}}

\newcommand{\Amc}{{\mathcal{A}}}
\newcommand{\Bmc}{{\mathcal{B}}}
\newcommand{\Cmc}{{\mathcal{C}}}
\newcommand{\Dmc}{{\mathcal{D}}}
\newcommand{\Emc}{{\mathcal{E}}}
\newcommand{\Fmc}{{\mathcal{F}}}
\newcommand{\Gmc}{{\mathcal{G}}}
\newcommand{\Hmc}{{\mathcal{H}}}
\newcommand{\Imc}{{\mathcal{I}}}
\newcommand{\Jmc}{{\mathcal{J}}}
\newcommand{\Kmc}{{\mathcal{K}}}
\newcommand{\lmc}{{\mathcal{l}}}
\newcommand{\Lmc}{{\mathcal{L}}}
\newcommand{\Mmc}{{\mathcal{M}}}
\newcommand{\Nmc}{{\mathcal{N}}}
\newcommand{\Omc}{{\mathcal{O}}}
\newcommand{\Pmc}{{\mathcal{P}}}
\newcommand{\Qmc}{{\mathcal{Q}}}
\newcommand{\Rmc}{{\mathcal{R}}}
\newcommand{\Smc}{{\mathcal{S}}}
\newcommand{\Tmc}{{\mathcal{T}}}
\newcommand{\Umc}{{\mathcal{U}}}
\newcommand{\Vmc}{{\mathcal{V}}}
\newcommand{\Wmc}{{\mathcal{W}}}
\newcommand{\Xmc}{{\mathcal{X}}}
\newcommand{\Ymc}{{\mathcal{Y}}}
\newcommand{\Zmc}{{\mathcal{Z}}}

\newcommand{\one} {{\boldsymbol{1}}}
\newcommand{\dbd} {{\boldsymbol{d}}}
\newcommand{\Dbd} {{\boldsymbol{D}}}
\newcommand{\ebd} {{\boldsymbol{e}}}
\newcommand{\fbd} {{\boldsymbol{f}}}
\newcommand{\Hbd} {{\boldsymbol{H}}}
\newcommand{\Kbd} {{\boldsymbol{K}}}
\newcommand{\Lbd} {{\boldsymbol{L}}}
\newcommand{\lbd} {{\boldsymbol{l}}}
\newcommand{\mbd} {{\boldsymbol{m}}}
\newcommand{\mubd} {{\boldsymbol{\mu}}}
\newcommand{\nbd} {{\boldsymbol{n}}}
\newcommand{\nubd} {{\boldsymbol{\nu}}}
\newcommand{\Nbd} {{\boldsymbol{N}}}
\newcommand{\Nalpha} {{\boldsymbol{N_\alpha}}}
\newcommand{\Nbeta} {{\boldsymbol{N_\beta}}}
\newcommand{\Ngamma} {{\boldsymbol{N_\gamma}}}
\newcommand{\pbd} {{\boldsymbol{p}}}
\newcommand{\Pbd} {{\boldsymbol{P}}}
\newcommand{\phibd} {{\boldsymbol{\phi}}}
\newcommand{\phihatbd} {{\boldsymbol{\hat{\phi}}}}
\newcommand{\psibd} {{\boldsymbol{\psi}}}
\newcommand{\psihatbd} {{\boldsymbol{\hat{\psi}}}}
\newcommand{\rbd} {{\boldsymbol{r}}}
\newcommand{\Rbd} {{\boldsymbol{R}}}
\newcommand{\sbd} {{\boldsymbol{s}}}
\newcommand{\ubd}{{\boldsymbol{u}}}
\newcommand{\Ubd} {{\boldsymbol{U}}}
\newcommand{\Wbd} {{\boldsymbol{W}}}
\newcommand{\Xbd} {{\boldsymbol{X}}}
\newcommand{\ybd} {{\boldsymbol{y}}}
\newcommand{\Ybd} {{\boldsymbol{Y}}}
\newcommand{\zbd} {{\boldsymbol{z}}}

\newcommand{\Amcbar}{{\bar{\Amc}}}
\newcommand{\bbar}{{\bar{b}}}
\newcommand{\Bbar}{{\bar{B}}}
\newcommand{\ebdbar}{{\bar{\ebd}}}
\newcommand{\Gmcbar} {{\bar{\Gmc}}}
\newcommand{\Ibar}{{\bar{I}}}
\newcommand{\Jmcbar} {{\bar{\Jmc}}}
\newcommand{\mubar} {{\bar{\mu}}}
\newcommand{\Mmbbar}{{\bar{\Mmb}}}
\newcommand{\omegabar} {{\bar{\omega}}}
\newcommand{\Omegabar} {{\bar{\Omega}}}
\newcommand{\pbar} {{\bar{p}}}
\newcommand{\Pbar} {{\bar{P}}}
\newcommand{\Phibar} {{\bar{\Phi}}}
\newcommand{\Pmbbar} {{\bar{\Pmb}}}
\newcommand{\Qmbbar} {{\bar{\Qmb}}}
\newcommand{\Rmcbar} {{\bar{\Rmc}}}
\newcommand{\sigmabar} {{\bar{\sigma}}}
\newcommand{\Smcbar} {{\bar{\Smc}}}
\newcommand{\taubar} {{\bar{\tau}}}
\newcommand{\Thetabar} {{\bar{\Theta}}}
\newcommand{\Vbar} {{\bar{V}}}
\newcommand{\Wbar}{{\bar{W}}}
\newcommand{\xbar} {{\bar{x}}}
\newcommand{\Xbar} {{\bar{X}}}
\newcommand{\Xitbar}{{\bar{X}_t^i}}
\newcommand{\Xjtbar}{{\bar{X}_t^j}}
\newcommand{\Xktbar}{{\bar{X}_t^k}}
\newcommand{\Xisbar}{{\bar{X}_s^i}}
\newcommand{\ybar} {{\bar{y}}}
\newcommand{\Ybar} {{\bar{Y}}}
\newcommand{\Zbar} {{\bar{Z}}}

\newcommand{\Ahat}{{\hat{A}}}
\newcommand{\bhat}{{\hat{b}}}
\newcommand{\fhat}{{\hat{f}}}
\newcommand{\ghat}{{\hat{g}}}
\newcommand{\hhat}{{\hat{h}}}
\newcommand{\Jmchat}{{\hat{\Jmc}}}
\newcommand{\lambdahat} {{\hat{\lambda}}}
\newcommand{\lhat}{{\hat{l}}}
\newcommand{\muhat} {{\hat{\mu}}}
\newcommand{\nuhat} {{\hat{\nu}}}
\newcommand{\phihat} {{\hat{\phi}}}
\newcommand{\psihat} {{\hat{\psi}}}
\newcommand{\rhohat} {{\hat{\rho}}}
\newcommand{\Yhat}{{\hat{Y}}}

\newcommand{\atil}{{\tilde{a}}}
\newcommand{\Atil}{{\tilde{A}}}
\newcommand{\btil}{{\tilde{b}}}
\newcommand{\Btil}{{\tilde{B}}}
\newcommand{\Ctil}{{\tilde{C}}}
\newcommand{\ebdtil}{{\tilde{\ebd}}}
\newcommand{\etatil}{{\tilde{\eta}}}
\newcommand{\ftil}{{\tilde{f}}}
\newcommand{\Ftil}{{\tilde{F}}}
\newcommand{\Fmctil}{{\tilde{\Fmc}}}
\newcommand{\Gmctil}{{\tilde{\Gmc}}}
\newcommand{\htil}{{\tilde{h}}}
\newcommand{\Itil}{{\tilde{I}}}
\newcommand{\Jtil}{{\tilde{J}}}
\newcommand{\Jmctil}{{\tilde{\Jmc}}}
\newcommand{\mutil}{{\tilde{\mu}}}
\newcommand{\Ntil}{{\tilde{N}}}
\newcommand{\nubdtil}{{\tilde{\nubd}}}
\newcommand{\omegatil}{{\tilde{\omega}}}
\newcommand{\Omegatil}{{\tilde{\Omega}}}
\newcommand{\Pmbtil}{{\tilde{\Pmb}}}
\newcommand{\qtil}{{\tilde{q}}}
\newcommand{\rbdtil}{{\tilde{\rbd}}}
\newcommand{\sigmatil}{{\tilde{\sigma}}}
\newcommand{\Tmctil}{{\tilde{\Tmc}}}
\newcommand{\thetatil}{{\tilde{\theta}}}
\newcommand{\Vmctil}{{\tilde{\Vmc}}}
\newcommand{\wtil}{{\tilde{w}}}
\newcommand{\Wtil}{{\tilde{W}}}
\newcommand{\xitil}{{\tilde{\xi}}}
\newcommand{\Xittil}{{\tilde{X}^{i}_t}}
\newcommand{\Xistil}{{\tilde{X}^{i}_s}}
\newcommand{\xtil}{{\tilde{x}}}
\newcommand{\Xtil}{{\tilde{X}}}
\newcommand{\ytil}{{\tilde{y}}}
\newcommand{\zetatil}{{\tilde{\zeta}}}
\newcommand{\Zittil}{{\tilde{Z}^{i,N}_t}}
\newcommand{\Zistil}{{\tilde{Z}^{i,N}_s}}
\newcommand{\Ztil}{{\tilde{Z}}}

\newcommand{\Rd}{{\Rmb^d}}
\newcommand{\Xt}{{\Xmb_t}}
\newcommand{\XT}{{\Xmb_T}}
\newcommand{\YT}{{\Ymb_T}}
\newcommand{\mumt}{{\mu^m(t)}}
\newcommand{\mums}{{\mu^m(s)}}
\newcommand{\Gammainf}{{\| \Gamma \|_\infty}}
\newcommand{\intXt}{{\int_\Xt}}
\newcommand{\intXT}{{\int_\XT}}

\newcommand{\EmbP}{{\Emb_{\Pmb^N}}}
\newcommand{\EmbPbar}{{\Emb_{\Pmbbar^N}}}
\newcommand{\EmbPG}{{\Emb_{\Pmb^N,\Gmc}}}

\newcommand{\UiNt}{{U^{i,N}_t}}
\newcommand{\UiNs}{{U^{i,N}_s}}
\newcommand{\Wit}{{W^i_t}}
\newcommand{\Wis}{{W^i_s}}
\newcommand{\Xit}{{X^i_t}}
\newcommand{\Xis}{{X^i_s}}
\newcommand{\Xjt}{{X^j_t}}
\newcommand{\Xjs}{{X^j_s}}
\newcommand{\Xkt}{{X^k_t}}
\newcommand{\Xks}{{X^k_s}}
\newcommand{\XiNt}{{X^{i,N}_t}}
\newcommand{\XiNs}{{X^{i,N}_s}}
\newcommand{\XjNt}{{X^{j,N}_t}}
\newcommand{\XjNs}{{X^{j,N}_s}}
\newcommand{\XkNt}{{X^{k,N}_t}}
\newcommand{\XkNs}{{X^{k,N}_s}}
\newcommand{\YiNt}{{Y^{i,N}_t}}
\newcommand{\YiNs}{{Y^{i,N}_s}}
\newcommand{\Yit}{{Y_t^i}}
\newcommand{\Yis}{{Y_s^i}}
\newcommand{\Yjt}{{Y_t^j}}
\newcommand{\Ykt}{{Y_t^k}}
\newcommand{\ZiNt}{{Z_t^{i,N}}}
\newcommand{\ZjNt}{{Z_t^{j,N}}}
\newcommand{\ZiNs}{{Z_s^{i,N}}}
\newcommand{\ZjNs}{{Z_s^{j,N}}}

\newcounter{constant}
\setcounter{constant}{1}
\newcommand{\const}{{\arabic{constant}}}
\newcommand{\constplus}{{\stepcounter{constant}\const}}
\newcommand{\constminus}{{\addtocounter{constant}{-1}\const}}

\newcommand{\beginsec}{
\setcounter{lemma}{0} \setcounter{theorem}{0}
\setcounter{corollary}{0} \setcounter{definition}{0}
\setcounter{example}{0} \setcounter{proposition}{0}
\setcounter{condition}{0} \setcounter{assumption}{0}
\setcounter{remark}{0} }

\numberwithin{equation}{section}
\numberwithin{lemma}{section}

\begin{frontmatter}
\title{Some Fluctuation Results for Weakly Interacting Multi-type Particle Systems}

 \runtitle{CLT for  Multi-type Particle Systems}

\begin{aug}
\author{ Amarjit Budhiraja and Ruoyu Wu\\ \ \\
}
\end{aug}

\today

\skp

\begin{abstract}
A collection of $N$-diffusing interacting particles where each particle belongs to one of $K$ different populations is considered. 
Evolution equation for a particle from population $k$ depends on the $K$ empirical measures of particle states corresponding to the various populations and the form of this dependence may change from one population to another.
In addition, the drift coefficients in the particle evolution equations may depend on a factor that is common to all particles and which is described through the solution of a stochastic differential equation coupled, through the empirical measures, with the $N$-particle dynamics. 
We are interested in the asymptotic behavior as $N\to \infty$. 
Although the full system is not exchangeable, particles in the same population have an exchangeable distribution. 
Using this structure, one can prove using standard techniques a law of large numbers result and a propagation of chaos property. 
In the current work we study fluctuations about the law of large number limit. 
For the case where the common factor is absent the limit is given in terms of a Gaussian field whereas in the presence of a common factor it is characterized through a mixture of Gaussian distributions. 
We also obtain, as a corollary, new fluctuation results for disjoint sub-families of single type particle systems, i.e.\ when $K=1$. 
Finally, we establish limit theorems for multi-type statistics of such weakly interacting particles, given in terms of multiple Wiener integrals.

\noi {\bf AMS 2000 subject classifications:} 60F05; 60K35; 60H30; 60J70.

\noi {\bf Keywords:} Mean field interaction, common factor, weakly interacting diffusions, propagation of chaos, central limit theorems, fluctuation limits, exchangeability, symmetric statistics, multiple Wiener integrals.
\end{abstract}

\end{frontmatter}


\section{Introduction}\label{section of introduction}\beginsec

For $N \ge 1$, let $Z^{1,N}, \dotsc, Z^{N,N}$ be $\Rmb^d$-valued stochastic processes, representing trajectories of $N$ particles, each of which belongs to one of $K$ types (populations) with the membership map denoted by $\pbd: \{1,\dotsc,N\} \to \{1,\dotsc,K\} \doteq \Kbd$, namely $i$-th particle is type $\alpha$ if $\pbd(i)=\alpha$.
The dynamics is given in terms of a collection of stochastic differential equations (SDE) driven by mutually independent Brownian motions (BM) with each particle's initial condition governed independently by a probability law that depends only on its type.
The $N$ stochastic processes interact with each other through the coefficients of the SDE which, for the $i$-th process, with $\pbd(i)=\alpha$, depend on not only the $i$-th state process and the $\alpha$-th type, but also the empirical measures $\mu^{\gamma,N}_t = \frac{1}{N_\gamma} \sum_{j:\pbd(j)=\gamma} \delta_{Z^{j,N}_t}$, $\gamma \in \{1,\dotsc,K\}$ and a stochastic process that is common to all particle equations (common factor).
Here $N_\gamma$ is the total number of particles that belong to the $\gamma$-th type.
The common factor is an $m$-dimensional stochastic process described once more through an SDE driven by a BM which is independent of the other noise processes.
Such stochastic systems are commonly referred to as {\em weakly interacting diffusion processes} and have a long history.
Classical works that study law of large number (LLN) results and central limit theorems (CLT) include McKean~\cite{McKean1966class,McKean1967propagation}, Braun and Hepp~\cite{BraunHepp1977vlasov}, Dawson~\cite{Dawson1983critical}, Tanaka~\cite{Tanaka1984limit}, Oelschal\"{a}ger~\cite{Oelschlager1984martingale}, Sznitman~\cite{Sznitman1984,Sznitman1991}, Graham and M\'{e}l\'{e}ard~\cite{GrahamMeleard1997}, Shiga and Tanaka~\cite{ShigaTanaka1985}, M\'{e}l\'{e}ard~\cite{Meleard1998}. 
All the above papers consider exchangeable populations, i.e.\ $K=1$, and a setting where the common factor is absent.
Motivated by approximation schemes for Stochastic Partial Differential Equations (SPDE) the papers~\cite{KurtzXiong1999, KurtzXiong2004} considered a setting where the common factor is modeled as a Brownian sheet that drives the dynamics of each particle. 
The paper~\cite{KurtzXiong1999} studied LLN and~\cite{KurtzXiong2004} considered fluctuations about the LLN limit. 
In a setting where particle dynamics are given through jump-diffusions and the common factor is described by another jump-diffusion that is coupled with the particle dynamics, a CLT was recently obtained in~\cite{BudhirajaSaha2014}.
The fluctuation limit theorems in~\cite{KurtzXiong2004,BudhirajaSaha2014}, although allowing for a common factor, are limited to exchangeable populations. 
The goal of the current work is to study fluctuations for multi-type particle systems. 
Since these systems are not exchangeable (there is also no natural way to regard the system as a $K$-vector of $d$-dimensional exchangeable particles), classical techniques for proving CLT, developed in the above papers~\cite{Sznitman1984,ShigaTanaka1985,Meleard1998,KurtzXiong2004}, are not directly applicable . 

Multi-type systems have been proposed as models in social sciences~\cite{ContucciGalloMenconi2008phase}, statistical mechanics~\cite{Collet2014macroscopic}, neurosciences~\cite{BaladronFaugeras2012}, etc.
In particular the last paper~\cite{BaladronFaugeras2012}, considers interacting diffusions of the form studied in the current work and establishes a LLN result and a propagation of chaos property. 
Our results in particular will provide asymptotic results on fluctuations from the LLN behavior obtained in~\cite{BaladronFaugeras2012}. 
Systems with a common factor also arise in many different areas. 
In Mathematical Finance, they have been used to model correlations between default probabilities of multiple firms~\cite{CvitanicMaZhang2012law}; in neuroscience modeling these arise as systematic noise in the external current input to a neuronal ensemble~\cite{FaugerasTouboulCessac2009constructive}; and for particle approximation schemes for SPDE, the common factor corresponds to the underlying driving noise in the SPDE~\cite{KurtzXiong1999, KurtzXiong2004}.  
The goal of this work is to study a family of multi-type weakly interacting diffusions with a common factor.
Our main objective is to establish a suitable CLT where the summands are quite general functionals of the trajectories of the particles with suitable integrability properties. 
Specifically, in the case where there is no common factor, letting $\Nalpha$ denote the set of indices $i$ such that $\pbd(i)=\alpha$ and 
\begin{equation*}
	\xi^N_\alpha (\phi) = \frac{1}{\sqrt{N_\alpha}} \sum_{i \in \Nalpha} \phi (Z^{i,N}),
\end{equation*}
for functions $\phi$ on the path space of the particles, we will establish (see Theorem~\ref{thmchap2multitype:CLT}) the weak convergence of the family $\{\xi^N_\alpha (\phi), \phi \in \Amc_\alpha, \alpha \in \Kbd\}$, in the sense of finite dimensional distributions, to a mean $0$ Gaussian field $\{\xi_\alpha (\phi), \phi \in \Amc_\alpha, \alpha \in \Kbd\}$. 
Here $\Amc_\alpha$ is a family of functions on the path space that are suitably centered and have appropriate integrability properties (see Section~\ref{secchap2multitype:CLT} for definitions.)  In the presence of a common factor the centering term is in general random (a function of the common factor)  and denoting by $\Vmc^N_\alpha(\phi)$ these suitably randomly centered and normalized sums of $\{\phi(Z^{i,N}), i \in \Nalpha\}$ (see~\eqref{eqchap3commonfactor:V alpha phi alpha}), we prove that under suitable conditions, $\{\Vmc^N_\alpha(\phi), \phi \in \Amcbar_\alpha, \alpha \in \Kbd\}$, where $\Amcbar_\alpha$ is once again a collection of functions on the path space with suitable integrability, converges in the sense of finite dimensional distributions to a random field whose distribution is given in terms of a Gaussian mixture. 

CLT established in this work also leads to new fluctuation results for the classical single type setting.  
Consider for example the case with no common factor and suppose one is interested in the joint asymptotic distribution of
 $(\xi^N_1(\phi_1), \xi^N_2(\phi_2))$, where 
\begin{equation*}
	\xi^N_1(\phi_1) = \frac{1}{\sqrt{\lfloor \lambda N \rfloor}} \sum_{i=1}^{\lfloor \lambda N \rfloor} \phi_1(Z^{i,N}), \quad \xi^N_2(\phi_2) = \frac{1}{\sqrt{N - \lfloor \lambda N \rfloor}} \sum_{i=\lfloor \lambda N \rfloor + 1}^N \phi_2(Z^{i,N})
\end{equation*}
and $\lambda \in (0,1)$. Existing results on central limit theorems for $K=1$ (eg.~\cite{ShigaTanaka1985, Sznitman1984, Meleard1998})  
do not give information on the joint limiting behavior of the above random variables.  Indeed, a naive guess that the propagation of
chaos property should imply the asymptotic independence of $\xi^N_1(\phi_1)$ and  $\xi^N_2(\phi_2))$ is in general false.
In Section~\ref{secchap2multitype:application} we will illustrate through a simple example how one can characterize the joint asymptotic distribution of the above pair. 

We are also interested in asymptotic behavior of path  functionals of particles of multiple type. Specifically, in the 
no common factor case, we will study in Section~\ref{secchap2multitype:generalization of CLT} the limiting
distribution of multi-type statistics of the form
\begin{equation} 
	\label{eq:intro1}
	\xi^N(\phi) = \frac{1}{\sqrt{N_1 \dotsm N_K}} \sum_{i_1 \in \boldsymbol{N_1}} \dotsb \sum_{i_K \in \boldsymbol{N_K}} \phi(Z^{i_1,N},\dots,Z^{i_K,N})
\end{equation}
where $\phi$ is a suitably centered function on the path space of $K \times d$ dimensional stochastic processes with appropriate integrability.
In the classical case (cf.~\cite{Dynkin1983}) where the particles have independent and identical dynamics, limit distributions of analogous statistics (with
$\boldsymbol{N_1} = \dotsb = \boldsymbol{N_K}$ ) are given through certain
multiple Wiener integrals (see Section~\ref{secchap2multitype:asymptotics of symmetric statistics}). 
In the setting considered here $\{Z^{i,N}\}$ are neither independent nor identical and we need to suitably extend the classical result for U-statistics to a multi-type setting and apply techniques as in the proof of Theorem~\ref{thmchap2multitype:CLT} to establish weak convergence of~\eqref{eq:intro1} and characterize the limit distributions.

The central idea in our proofs is a change of measure technique based on Girsanov's theorem that, in the case of single type populations with no common factor, goes back to the works~\cite{ShigaTanaka1985, Sznitman1984}.
For this case, the technique reduces the problem to a setting with i.i.d.\ particles and the main challenge is to suitably analyze the asymptotic behavior of the Radon-Nikodym derivative.  
In the multi-type setting (with no common factor), although one can similarly reduce to a problem for independent particles (however not identically distributed), the Radon-Nikodym derivative is given in terms of quantities that involve particles of different types and the classical results on asymptotics for symmetric statistics~\cite{Dynkin1983} that are used in~\cite{ShigaTanaka1985, Sznitman1984} are not directly applicable and one needs to suitably {\em lift} the problem to a higher dimensional space.  
For this, we first treat the easier setting where for all $\alpha, \gamma \in \Kbd$, $N_\alpha = N_\gamma$ so that under the new measure one can view the whole population as a collection of i.i.d.\ $K$-vector  particles with $k$-th coordinate belonging to the $k$-th type. 
We then extend the result to the setting where the number of particles of different  types may not be the same. 
For proofs in this general case, the property that, under the new measure, the particles are independent of each other together with classical results for multiple Wiener integrals, play an important role.
Note that when $N_\alpha = N_\gamma, \forall \, \alpha,\gamma \in \Kbd$, one can view $\{Z^{i,N}\}$ under the original measure as a $K$-vector of exchangeable particles.
For this case one can apply results of~\cite{ShigaTanaka1985,Sznitman1984} to deduce a CLT.
However it seems hard to use this result directly to treat the setting with different numbers of particles in different populations.
One of the key ingredients in the approach taken in~\cite{ShigaTanaka1985, Sznitman1984} for the single type setting is to identify the limit of Radon-Nikodym derivatives in terms of a suitable integral operator.
In the multi-type case (with no common factor) there is an analogous operator, however describing it and  the Hilbert space
on which it acts requires more work.  Roughly speaking the Hilbert space corresponds to the space of $\nuhat$-square integrable
functions on the path space of $K\times 2d$ dimensional continuous stochastic processes, where $\nuhat = \nu_1 \otimes \dotsb \otimes \nu_K$
and $\nu_{\alpha}$ for $\alpha \in \Kbd$ is  the limit law of processes associated with particles of type $\alpha$.

The setting with a common factor (see Sections~\ref{chap3commonfactor:weakly interacting particle systems with a common factor},~\ref{secchap3commonfactor:preparations} and~\ref{secchap3commonfactor:proofs}) presents several additional challenges. The factor process is fully coupled with the 
$N$-particle dynamics in the sense that the coefficients depend not only on its own state but also on the particle empirical measures $\{\mu^{\alpha,N}_t, \alpha \in \Kbd\}$. 
Due to the presence of the common factor, the limit of $\mu^{\alpha,N}_t$ will in general be a random measure.
As a result, the centering in the fluctuation theorem will typically be random as well and one expects the limit law for such fluctuations to be not Gaussian but rather a ``Gaussian mixture".
Our second main result (Theorem~\ref{thmchap3commonfactor:CLT}) establishes such a CLT under appropriate conditions. 
Proof proceeds by first considering a closely related collection of $N$ stochastic processes which, conditionally on a common factor, are independent with distribution that only depends on its type.
Unlike the setting with no common factor, there is no convenient change of measure under which this collection has the same law as that of the original collection of processes. 
We instead consider a change of measure, under which the distribution of the random centering remains invariant and one can view the distribution of a suitably {\em perturbed} form of the original vector of scaled and centered sums in terms of conditionally i.i.d.\ collections associated with the $K$ types of particles.  
Asymptotics of this latter collection can be analyzed in a manner analogous to the no-common-factor case, however in order to deduce the asymptotic properties of the original collection, one needs to carefully estimate the error introduced by the perturbation (cf.\ Section~\ref{secchap3commonfactor:study YN - Y}). 
Once again, to identify the limits of the Radon-Nikodym derivative, integral
operators on suitable $L^2$-path spaces are employed. 
A new aspect here is that we need to first consider limit laws conditional on the common factor (almost surely), which are now characterized in terms of certain random integral operators.
Finally, the description of the limit of the (un-conditional) Radon-Nikodym derivatives and synthesis of the limiting random field requires a measurable construction of multiple Wiener integrals with an additional random parameter (see Section~\ref{secchap3commonfactor:combining contributions from JN1 and JN2}).

Central limit theorems for systems of weakly interacting particles with a common factor have previously been studied in~\cite{KurtzXiong2004, BudhirajaSaha2014}. Both of these papers consider the case $K=1$. The paper~\cite{KurtzXiong2004}
establishes a fluctuation result for centered and scaled empirical measures in the space of suitably modified Schwartz distributions. Such a result does not immediately yield limit theorems for statistics that depend on particle states at multiple time instants (see~\cite{BudhirajaSaha2014} for a discussion of this point). In contrast the approach taken in~\cite{BudhirajaSaha2014}, and also in the current paper, allows to establish limit theorems for quite general square integrable path functionals. Furthermore, the paper~\cite{BudhirajaSaha2014} sketches an argument for recovering the weak convergence of empirical measures in the Schwartz space from the CLT for path functionals. 
Although not pursued here, with additional work and in an analogous manner, for the current setting as well one can establish convergence of suitably centered and normalized empirical measures $\{\mu^{\alpha,N}_t, \alpha \in \Kbd\}$ in an appropriate product Schwartz space.

The paper is organized as follows.
In Section~\ref{secchap2multitype:model} we begin by introducing our model of multi-type weakly interacting diffusions where the common factor is absent.
A basic condition (Condition~\ref{condchap2multitype:cond1}) is stated, under which both SDE for the pre-limit $N$-particle system and for the corresponding limiting nonlinear diffusion process have unique solutions, and a law of large numbers and a propagation of chaos property holds.
These results are taken from~\cite{BaladronFaugeras2012}.
For simplicity we consider here the case where the dependence of the drift coefficients on the empirical measures is linear. A more general nonlinear dependence is treated in Section~\ref{chap3commonfactor:weakly interacting particle systems with a common factor}.
In Section~\ref{secchap2multitype:fluctuations} we present a CLT (Theorem~\ref{thmchap2multitype:CLT}) for the no-common-factor case. 
As noted previously, this CLT gives new asymptotic results for a single type population as well. This point is illustrated through an example in Section~\ref{secchap2multitype:application}.  
We also give a limit theorem (Theorem~\ref{thmchap2multitype:general CLT}) for multi-type statistics of the form as in~\eqref{eq:intro1} in Section~\ref{secchap2multitype:generalization of CLT}. Proofs of Theorems~\ref{thmchap2multitype:CLT} and 
\ref{thmchap2multitype:general CLT} are provided in Section~\ref{secchap2multitype:proofs}.  
But before, in Section~\ref{chap3commonfactor:weakly interacting particle systems with a common factor}, we state our main results for the setting where a common factor is present. 
Specifically, Section~\ref{secchap3commonfactor:well posedness} states a basic condition (Condition~\ref{condchap3commonfactor:cond1}), which will ensure pathwise existence and uniqueness of solutions to both SDE for the $N$-particle system and a related family of SDE describing the limiting nonlinear Markov process.
Main result for the common factor setting is Theorem~\ref{thmchap3commonfactor:CLT} which appears in Section~\ref{secchap3commonfactor:CLT}.
For the sake of the exposition, some of the conditions for this theorem (Conditions~\ref{condchap3commonfactor:cond2} and~\ref{condchap3commonfactor:cond3}) and related notation appear later   in Section~\ref{secchap3commonfactor:preparations}. 
Finally Section~\ref{secchap3commonfactor:proofs} contains proofs of Theorem~\ref{thmchap3commonfactor:CLT} and related results.

The following notation will be used. 
Fix $T < \infty$. 
All stochastic processes will be considered over the time horizon $[0,T]$. 
We will use the notations $\{X_t\}$ and $\{X(t)\}$ interchangeably for stochastic processes.
Denote by $\Pmc(\Smb)$ the space of probability measures on a Polish space $\Smb$, equipped with the topology of weak convergence.
A convenient metric for this topology is the bounded-Lipschitz metric $d_{BL}$ defined as
\begin{equation*}
d_{BL}(\nu_1,\nu_2) = \sup_{f \in \Bmb_1} | \langle f, \nu_1 - \nu_2 \rangle |, \nu_1, \nu_2 \in \Pmc(\Smb),
\end{equation*}
where $\Bmb_1$ is the collection of all Lipschitz functions $f$ that are bounded by $1$ and such that the corresponding Lipschitz constant is also bounded by 1, and $\langle f,\mu \rangle = \int f \, d\mu$ for a signed measure $\mu$ on $\Smb$ and $\mu$-integrable function $f : \Smb \to \Rmb$.
Let $\Bmc(\Smb)$ be the Borel $\sigma$-field on $\Smb$.
Space of functions that are continuous from $[0,T]$ to $\Smb$ will be denoted as $\Cmb_\Smb[0,T]$ and equipped with the uniform topology. 
For $k \in \Nmb$, let $\Cmc_k$ denote $\Cmb_{\Rmb^k}[0,T]$ and let $\|f\|_{*,t} = \sup_{0 \le s \le t} \|f(s)\|$ for $f \in \Cmc_k$, $t \in [0,T]$.
Space of functions that are continuous and bounded from $\Smb$ to $\Rmb$ will be denoted as $\Cmb_b(\Smb)$.
For a bounded function $f$ from $\Smb$ to $\Rmb$, let $\|f\|_\infty = \sup_{x \in \Smb} |f(x)|$.
Probability law of an $\Smb$-valued random variable $\eta$ will be denoted as $\Lmc(\eta)$. 
Expected value under some probability law $\Pmb$ will be denoted as $\Emb_{\Pmb}$.
Convergence of a sequence $\{X_n\}$ of $\Smb$-valued random variables in distribution to $X$ will be written as $X_n \Rightarrow X$. 
For a $\sigma$-finite measure $\nu$ on a Polish space $\Smb$, denote by $L^2_{\Rmb^k}(\Smb,\nu)$ as the Hilbert space of $\nu$-square integrable functions from $\Smb$ to $\Rmb^k$. 
When $k$ and $\Smb$ are not ambiguous, we will merely write $L^2(\nu)$. 
The norm in this Hilbert space will be denoted as $\| \cdot \|_{L^2(\Smb,\nu)}$.
Given a sequence of random variables $X_n$ on probability spaces $(\Omega_n, \Fmc_n, P_n)$, $n \ge 1$, we say $X_n$ converges to $0$ in $L^2(\Omega_n, P_n)$ if $\int X_n^2 \, dP_n \to 0$ as $n \to \infty$.
We will usually denote by $\kappa, \kappa_1, \kappa_2, \dotsc$, the constants that appear in various estimates within a proof. 
The value of these constants may change from one proof to another.
Cardinality of a finite set $A$ will be denoted as $|A|$.
For $x,y \in \Rmb^d$, let $x \cdot y = \sum_{i=1}^d x_i y_i$.


\section{Model} \label{secchap2multitype:model}

Consider an infinite collection of particles where each particle belongs to one of $K$ different populations. 
Letting $\Kbd = \{1,\dotsc,K\}$, define a function $\pbd : \Nmb \to \Kbd$ by $\pbd(i) = \alpha$ if the $i$-th particle belongs to $\alpha$-th population.
For $N \in \Nmb$, let $\Nbd = \{1,\dotsc,N\}$.
For $\alpha \in \Kbd$, let $\Nalpha = \{i \in \Nbd: \pbd(i) = \alpha \}$ and denote by $N_\alpha$ as the number of particles belonging to the $\alpha$-th population, namely $N_\alpha = |\Nalpha|$. 
Assume that $N_\alpha/N \to \lambda_\alpha \in (0,1)$ as $N \to \infty$.
Let $\Nmb_\alpha = \{i \in \Nmb: \pbd(i) = \alpha \}$.

For fixed $N \ge 1$, consider the following system of equations for the $\Rmb^d$-valued continuous stochastic processes $Z^{i,N}$, $i \in \Nbd$, given on a filtered probability space $(\Omega, \Fmc, \Pbd, \{\Fmc_t\})$.
For $i \in \Nalpha$, $\alpha \in \Kbd$,
\begin{equation} \label{eqchap2multitype:ZiNt}
	\ZiNt = Z_0^{i,N} + \int_0^t f_\alpha(s,\ZiNs) \, ds + \int_0^t \sum_{\gamma=1}^{K} \langle b_{\alpha\gamma}(\ZiNs,\cdot), \mu^{\gamma,N}_s \rangle \, ds + W^i_t,
\end{equation}
where $f_\alpha : [0,T] \times \Rmb^d \to \Rmb^d$ and $b_{\alpha\gamma} : \Rmb^d \times \Rmb^d \to \Rmb^d$ are suitable functions, and $\mu^{\gamma,N}_s = \frac{1}{N_\gamma} \sum_{j \in \Ngamma} \delta_{Z^{j,N}_s}$. 
Here $\{W^i, i \in \Nbd\}$ are mutually independent $d$-dimensional $\{\Fmc_t\}$-Brownian motions. 
We assume that $\{Z_0^{i,N}: i \in \Nbd\}$ are $\Fmc_0$-measurable and mutually independent, with $\Lmc(Z_0^{i,N}) = \mu^\alpha_0$ for $i \in \Nalpha, \alpha \in \Kbd$.

Conditions on the various coefficients will be introduced shortly. 
Along with the $N$-particle equation $\eqref{eqchap2multitype:ZiNt}$ we will also consider a related infinite system of equations for $\Rmb^d$-valued continuous stochastic processes $X^i$, $i \in \Nmb$, given (without loss of generality) on $(\Omega, \Fmc, \Pbd, \{\Fmc_t\})$. 
For $\alpha \in \Kbd$ and $i \in \Nmb_\alpha$,
\begin{equation} \label{eqchap2multitype:Xit}
	\Xit = X_0^i + \int_0^t f_\alpha(s,\Xis) \, ds + \int_0^t \sum_{\gamma=1}^{K} \langle b_{\alpha\gamma}(\Xis,\cdot), \mu^\gamma_s \rangle \, ds + W^i_t,
\end{equation}
where $\mu^\alpha_s = \Lmc(X^i_s)$.
We assume that $\{X_0^i : i \in \Nmb\}$ are $\Fmc_0$-measurable and mutually independent, with $\Lmc(X_0^i) = \mu^\alpha_0$ for $\alpha \in \Kbd$ and $i \in \Nmb_\alpha$.

The existence and uniqueness of pathwise solutions of $\eqref{eqchap2multitype:ZiNt}$ and $\eqref{eqchap2multitype:Xit}$ can be shown under following conditions on the coefficients (cf.~\cite{BaladronFaugeras2012}).

\begin{condition} \label{condchap2multitype:cond1}
	$(a)$ For all $\alpha \in \Kbd$, the functions $f_\alpha$ are locally Lipschitz in the second variable, uniformly in $t \in [0,T]$: 
	For every $r > 0$ there exists $L_r \in (0,\infty)$ such that for all $x, y \in \{ z \in \Rmb^d : \| z \| \le r \}$, and $t \in [0,T]$:
	\begin{equation*}
		\|f_\alpha(t,x) - f_\alpha(t,y)\| \le L_r \|x-y\|.
	\end{equation*}
	$(b)$ There exists $L \in (0,\infty)$ such that for all $\alpha \in \Kbd$, $t \in [0,T]$ and $x \in \Rmb^d$: 
	\begin{equation*}
		x \cdot f_\alpha(t,x) \le L(1+\|x\|^2).
	\end{equation*}
	$(c)$ For all $\alpha, \gamma \in \Kbd$, $b_{\alpha\gamma}$ are bounded Lipschitz functions: 
	There exists $L \in (0,\infty)$ such that for all $(x,y)$ and $(x',y')$ in $\Rmb^d \times \Rmb^d$, we have:
	\begin{gather*}
		\| b_{\alpha\gamma}(x,y)\| \le L, \\
		\|b_{\alpha\gamma}(x,y) - b_{\alpha\gamma}(x',y')\| \le L(\|x-x'\|+\|y-y'\|).
	\end{gather*}
	
\end{condition}

The paper~\cite{BaladronFaugeras2012} also proves the following propagation of chaos property:
For any $n$-tuple $(i_1^N,\dotsc,i_n^N) \in \Nbd^n$, $i_1^N \ne i_2^N \ne \dotsb \ne i_n^N$, with $\pbd(i_j^N) = \alpha_j$, $j = 1,\dotsc,n$,
\begin{equation} \label{eqchap2multitype:propagation of chaos}
	\Lmc(\{Z^{i_1^N,N},\dotsc,Z^{i_n^N,N}\}) \to \mu^{\alpha_1} \otimes \dotsb \otimes \mu^{\alpha_n} \text{ as } N \to \infty,
\end{equation}
in $\mathcal{P}(\mathcal{C}_d^n)$, where for $\alpha \in \Kbd$ and $i \in \Nmb_\alpha$, $\mu^\alpha = \Lmc(X^i) \in \mathcal{P}(\mathcal{C}_d)$ and $X^i$ is as in~\eqref{eqchap2multitype:Xit}.
Using the above result and a straightforward argument similar to~\cite{Sznitman1991} one can show the following law of large numbers.
Proof is omitted.

\begin{theorem} \label{thmchap2multitype:LLN}
	Suppose Condition~\ref{condchap2multitype:cond1} holds.
	For all $f \in \Cmb_b(\Cmc_d^K)$, as $N \to \infty$,
	\begin{equation} \label{eqchap2multitype:LLN}
		\frac{1}{N_1 \dotsm N_K} \sum_{\boldsymbol{i}} f(Z^{i_1,N}, \dotsc, Z^{i_K,N}) \Rightarrow \langle f, \mu^1 \otimes \dotsb \otimes \mu^K \rangle,
	\end{equation}
	where the summation is taken over all $K$-tuples $\boldsymbol{i} = (i_1,\dots,i_K) \in \boldsymbol{N_1} \times \dotsb \times \boldsymbol{N_K}$.
\end{theorem}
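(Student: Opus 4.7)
The plan is to prove $L^2$-convergence of
\[
S_N \doteq \frac{1}{N_1 \cdots N_K} \sum_{\boldsymbol{i}} f(Z^{i_1,N}, \ldots, Z^{i_K,N})
\]
to the deterministic constant $\langle f, \mu^1 \otimes \cdots \otimes \mu^K \rangle$; weak convergence to a constant limit is then automatic. Since $f \in \Cmb_b(\Cmc_d^K)$ is bounded, it suffices to compute the first two moments of $S_N$ and show they converge to $\langle f, \mu^1 \otimes \cdots \otimes \mu^K \rangle$ and its square, respectively. The two key inputs are the propagation of chaos property~\eqref{eqchap2multitype:propagation of chaos} and the exchangeability of particles within each population, which is inherited from the i.i.d.\ initial conditions and the symmetry of the coefficients in~\eqref{eqchap2multitype:ZiNt} under permutation of indices within a single type.

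For the first moment, observe that since $\{\Nalpha\}_{\alpha \in \Kbd}$ are pairwise disjoint, the entries of every $K$-tuple $\boldsymbol{i} \in \boldsymbol{N_1} \times \cdots \times \boldsymbol{N_K}$ are automatically distinct. By within-type exchangeability the value of $\Emb[f(Z^{i_1,N}, \ldots, Z^{i_K,N})]$ does not depend on the choice of $\boldsymbol{i}$, and by~\eqref{eqchap2multitype:propagation of chaos} (applied with $n = K$ and types $(1,\ldots,K)$) it converges to $\langle f, \mu^1 \otimes \cdots \otimes \mu^K \rangle$ as $N \to \infty$. Averaging over $\boldsymbol{i}$ yields $\Emb[S_N] \to \langle f, \mu^1 \otimes \cdots \otimes \mu^K \rangle$.

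For the second moment I expand $\Emb[S_N^2]$ as a double sum over pairs $(\boldsymbol{i}, \boldsymbol{j})$ and split according to whether the $2K$ indices $i_1, \ldots, i_K, j_1, \ldots, j_K$ are pairwise distinct. In the ``fully distinct'' regime I apply~\eqref{eqchap2multitype:propagation of chaos} at order $2K$ with type sequence $(1, \ldots, K, 1, \ldots, K)$, together with within-type exchangeability, to obtain
\[
\Emb\bigl[f(Z^{i_1,N}, \ldots, Z^{i_K,N})\, f(Z^{j_1,N}, \ldots, Z^{j_K,N})\bigr] \longrightarrow \langle f, \mu^1 \otimes \cdots \otimes \mu^K \rangle^2,
\]
and the fraction of such pairs among all $(\boldsymbol{i},\boldsymbol{j})$ tends to $1$. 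In the complementary regime some coordinate satisfies $i_\alpha = j_\alpha$; the number of such pairs is bounded by $\sum_{\alpha \in \Kbd} (N_1 \cdots N_K)^2 / N_\alpha$, so using $\|f\|_\infty < \infty$ together with $N_\alpha/N \to \lambda_\alpha > 0$, the contribution of this regime to $\Emb[S_N^2]$ is $O(1/N)$. Combining the two estimates yields $\Emb[S_N^2] \to \langle f, \mu^1 \otimes \cdots \otimes \mu^K \rangle^2$, hence $\mathrm{Var}(S_N) \to 0$ and $S_N \to \langle f, \mu^1 \otimes \cdots \otimes \mu^K \rangle$ in $L^2$, and therefore in distribution. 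There is no serious obstacle; the only point worth flagging is that propagation of chaos must be invoked at order $2K$, not $K$, in order to control the cross terms in the second-moment expansion.
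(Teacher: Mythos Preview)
Your proof is correct and is precisely the ``straightforward argument similar to~\cite{Sznitman1991}'' that the paper alludes to; the paper itself omits the proof entirely. The second-moment computation you outline, leveraging within-type exchangeability and propagation of chaos at order $2K$, is the standard route, and your counting of coincident pairs is accurate since the only possible collisions are $i_\alpha = j_\alpha$ within a single type.
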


Note that Theorem~\ref{thmchap2multitype:LLN} implies in particular that for all $\phi \in \Cmb_b(\Cmc_d)$ and $\alpha \in \Kbd$, as $N \to \infty$,
\begin{equation} \label{eqchap2multitype:LLN alpha}
	\frac{1}{N_\alpha} \sum_{i \in \Nalpha} \phi(Z^{i,N}) \Rightarrow \langle \phi, \mu^\alpha \rangle.
\end{equation}
In this work we are concerned with the fluctuations of expressions as in the LHS of~\eqref{eqchap2multitype:LLN} and~\eqref{eqchap2multitype:LLN alpha} about their law of large number limits given by the RHS of~\eqref{eqchap2multitype:LLN} and~\eqref{eqchap2multitype:LLN alpha}, respectively.
Limit theorems that characterize theses fluctuations are given in Theorems~\ref{thmchap2multitype:CLT} and~\ref{thmchap2multitype:general CLT}.
We will also establish central limit theorems in a setting where there is a common factor that appears in the dynamics of all the particles.
This setting, which in addition to a common factor will allow for a nonlinear dependence of the drift coefficient on the empirical measure, is described in Section~\ref{chap3commonfactor:weakly interacting particle systems with a common factor} and our main result for this case is given in Theorem~\ref{thmchap3commonfactor:CLT}.


\section{Fluctuations for multi-type particle system} \label{secchap2multitype:fluctuations}

Throughout this section Condition~\ref{condchap2multitype:cond1} will be assumed and will not be noted explicitly in statement of results.

\subsection{Canonical processes} \label{secchap2multitype:canonical processes}

We now introduce the following canonical spaces and stochastic processes. 
Let $\Omega_d = \Cmc_d \times \Cmc_d$. 
For $\alpha \in \Kbd$, denote by $\nu_\alpha \in \Pmc(\Omega_d)$ the law of $(W^i, X^i)$ where $i \in \Nalpha$ and $X^i$ is given by~\eqref{eqchap2multitype:Xit}. 
Let $\nuhat = \nu_1 \otimes \dotsb \otimes \nu_K$. 
Define for $N \in \Nmb$ the probability measure $\Pmb^N$ on $\Omega_d^N$ as
\begin{equation*}
	\Pmb^{N} = \Lmc \Big( (W^1, X^1), (W^2, X^2), \dotsc,(W^N, X^N) \Big) \equiv \nu_{\pbd(1)} \otimes \nu_{\pbd(2)} \otimes \dotsb \otimes \nu_{\pbd(N)},
\end{equation*}
For $\omegabar = (\omega_1, \omega_2, \dotsc, \omega_N) \in \Omega_d^N$, let $V^i(\omegabar) = \omega_i, i \in \Nbd$. 
Abusing notation,
\begin{equation} \label{eqchap2multitype:Vi}
	V^i = (W^i, X^i), \quad i \in \Nbd.
\end{equation}
Also define the canonical processes $V_* = (W_*, X_*)$ on $\Omega_d$ as
\begin{equation} \label{eqchap2multitype:V star}
	V_*(\omega) = (W_*(\omega), X_*(\omega)) = (\omega_1, \omega_2), \quad \omega = (\omega_1, \omega_2) \in \Omega_d.
\end{equation}


\subsection{Some integral operators} \label{secchap2multitype:some integral operators}

We will need the following functions for stating our first main theorem. 
Define for $\alpha, \beta, \gamma \in \Kbd$ and $t \in [0,T]$, the function $b_{\alpha\gamma,t}$ from $\Rmb^d \times \Rmb^d$ to $\Rmb^d$ as
\begin{equation} \label{eqchap2multitype:b alpha gamma t}
	b_{\alpha\gamma,t}(x,y) = b_{\alpha\gamma}(x,y) - \langle b_{\alpha\gamma}(x,\cdot), \mu^\gamma_t \rangle, \quad (x,y) \in \Rmb^d \times \Rmb^d.
\end{equation}
Define for $\alpha, \gamma \in \Kbd$, the function $h_{\alpha\gamma}$ from $\Omega_d \times \Omega_d$ to $\Rmb$ ($\nu_\alpha \otimes \nu_\gamma$ a.s.) as
\begin{equation}
	h_{\alpha\gamma}(\omega,\omega') = \sqrt{\frac{\lambda_\alpha}{\lambda_\gamma}} \int_0^T b_{\alpha\gamma,t} (X_{*,t}(\omega),X_{*,t}(\omega')) \cdot dW_{*,t}(\omega), \quad (\omega, \omega') \in \Omega_d \times \Omega_d. \label{eqchap2multitype:h alpha gamma}
\end{equation}
We now define {\em lifted} functions $\hhat_{\alpha\gamma}, \hhat$ from $\Omega_d^K \times \Omega_d^K$ to $\Rmb$ ($\nuhat \otimes \nuhat$ a.s.) for $\alpha, \gamma \in \Kbd$ as follows:
For $\omega = (\omega_1,\dotsc,\omega_K)$ and $\omega = (\omega'_1,\dotsc,\omega'_K)$ in $\Omega_d^K$, let
\begin{equation}
	\hhat_{\alpha\gamma}(\omega,\omega') = h_{\alpha\gamma}(\omega_\alpha,\omega'_\gamma), \quad \hhat(\omega,\omega') = \sum_{\alpha=1}^{K} \sum_{\gamma=1}^{K} \hhat_{\alpha\gamma}(\omega,\omega'). \label{eqchap2multitype:hhat}
\end{equation}

Now consider the Hilbert space $L^2(\Omega_d^K, \nuhat)$.
For $\alpha, \gamma \in \Kbd$, define integral operators $A_{\alpha\gamma}$ and $A$ on $L^2(\Omega_d^K, \nuhat)$ as follows: 
For $f \in L^2(\Omega_d^K, \nuhat)$ and $\omega \in \Omega_d^K$,
\begin{equation}
	A_{\alpha\gamma} f(\omega) = \int_{\Omega_d^K} \hhat_{\alpha\gamma}(\omega',\omega) f(\omega') \, \nuhat(d\omega'), \quad A f(\omega) = \int_{\Omega_d^K} \hhat(\omega',\omega) f(\omega') \, \nuhat(d\omega'). \label{eqchap2multitype:A} 
\end{equation}
It's clear from~\eqref{eqchap2multitype:hhat}-\eqref{eqchap2multitype:A} that $A = \sum_{\alpha=1}^{K} \sum_{\gamma=1}^{K} A_{\alpha\gamma}$. 
Denote by $I$ the identity operator on $L^2(\Omega_d^K, \nuhat)$.
The following lemma will be proved in Section~\ref{secchap2multitype:proof of lemma of trace}.

\begin{lemma} \label{lemchap2multitype:trace}
	$(a)$ $\displaystyle \textnormal{Trace}(AA^*) = \sum_{\alpha,\gamma=1}^K \frac{\lambda_\alpha}{\lambda_\gamma} \int_0^T \int_{\Omega_d^2} \| b_{\alpha\gamma,t}(X_{*,t}(\omega),X_{*,t}(\omega')) \|^2 \, \nu_\alpha(d\omega) \nu_\gamma(d\omega') \, dt$.
	$(b)$ \textnormal{Trace}$(A^n) = 0$ for all $n \ge 2$.
	$(c)$ $I - A$ is invertible.
\end{lemma}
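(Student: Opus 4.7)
The plan is to compute directly from the definitions, exploiting two centering properties of $h_{\alpha\gamma}$: (i) for $\nu_\gamma$-a.e.\ $\omega'$, the stochastic integral $h_{\alpha\gamma}(\cdot, \omega')$ has $\nu_\alpha$-mean zero by the It\^o martingale property; and (ii) the definition of $b_{\alpha\gamma,t}$ forces $\langle b_{\alpha\gamma,t}(x, \cdot), \mu^\gamma_t \rangle = 0$, so a Fubini exchange of the $\nu_\gamma$ integral with the It\^o integral gives $\int h_{\alpha\gamma}(\omega, \omega') \, \nu_\gamma(d\omega') = 0$ for $\nu_\alpha$-a.e.\ $\omega$. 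Boundedness of $b_{\alpha\gamma}$ from Condition~\ref{condchap2multitype:cond1} places $\hhat \in L^p(\nuhat \otimes \nuhat)$ for every $p$, so $A$ is Hilbert--Schmidt and $A^n$ is trace class for $n \ge 2$; this legitimises all trace manipulations below.

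For (a), I would start from the Hilbert--Schmidt formula $\textnormal{Trace}(AA^*) = \|\hhat\|_{L^2(\nuhat \otimes \nuhat)}^2$, expand $\hhat = \sum_{\alpha,\gamma} \hhat_{\alpha\gamma}$, and show every off-diagonal term $\int \hhat_{\alpha\gamma}(\omega,\omega') \hhat_{\alpha'\gamma'}(\omega,\omega') \, d\nuhat \otimes \nuhat$ with $(\alpha,\gamma) \ne (\alpha',\gamma')$ vanishes: if $\alpha \ne \alpha'$, then $\omega_\alpha$ appears only in the first factor and integrates to zero via (i); if instead $\gamma \ne \gamma'$, then $\omega'_\gamma$ plays the analogous role via (ii). The diagonal contributions $\int \int h_{\alpha\gamma}(\omega_\alpha, \omega'_\gamma)^2 \, \nu_\alpha(d\omega_\alpha) \nu_\gamma(d\omega'_\gamma)$ collapse via It\^o's isometry to exactly the stated formula.

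For (b), I would write $\textnormal{Trace}(A^n) = \int \prod_{i=1}^n \hhat(\omega^{(i)}, \omega^{(i-1)}) \, d\nuhat^{\otimes n}$ with cyclic indexing ($\omega^{(0)} := \omega^{(n)}$), expand each factor as $\sum_{\alpha_i, \gamma_i} h_{\alpha_i\gamma_i}(\omega^{(i)}_{\alpha_i}, \omega^{(i-1)}_{\gamma_i})$, and show each summand integrates to zero. The key point is that the $i$-th factor is an It\^o integral against the Brownian motion $W_*(\omega^{(i)}_{\alpha_i})$; as $i$ ranges over $\{1,\dotsc,n\}$ the superscripts $i$ are distinct, so these $n$ driving Brownian motions are mutually independent (with jointly adapted integrands) in the product filtration on $(\Omega_d^K)^n$. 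The classical fact that the expected product of $n \ge 2$ continuous $L^2$ martingales starting at zero with pairwise vanishing covariation is zero --- proved by iterating It\^o's product rule to expand $\prod_i M^{(i)}$ as a sum of mean-zero stochastic integrals, the cross-variation terms dropping out by independence --- then yields $\textnormal{Trace}(A^n) = 0$.

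Finally for (c), $A$ is compact (Hilbert--Schmidt from (a)) and $A^n$ is trace class for $n \ge 2$, so Lidskii's theorem combined with (b) gives $\sum_j \lambda_j^n = 0$ for every $n \ge 2$, where $\{\lambda_j\}$ is the eigenvalue sequence of $A$ with algebraic multiplicities. Either a spectral-radius / Vandermonde argument (the dominant modulus terms would otherwise make the Ces\`aro average of $|\textnormal{Trace}(A^n)/\Lambda^n|^2$ positive) or equivalently the Carleman--Fredholm identity $\det_2(I - A) = \exp(-\sum_{n \ge 2} \textnormal{Trace}(A^n)/n) = 1$ forces every $\lambda_j$ to vanish, so $1 \notin \sigma(A)$ and $I - A$ is invertible by the Fredholm alternative. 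The main obstacle I anticipate is the zero-expectation step in (b): one must verify carefully that in the product filtration on $(\Omega_d^K)^n$ the various $h_{\alpha_i\gamma_i}$ factors are simultaneously martingales with pairwise vanishing covariation, so that It\^o's product rule applies cleanly and no hidden covariation contribution survives in the trace.
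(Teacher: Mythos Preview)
Your proof is correct and follows essentially the same route as the paper. For (a) the paper decomposes $AA^* = \sum_{\alpha,\gamma,\alpha',\gamma'} A_{\alpha\gamma}A_{\alpha'\gamma'}^*$ and kills the off-diagonal traces using exactly your centering properties (i) and (ii), then evaluates the diagonal via It\^o isometry; your direct use of $\textnormal{Trace}(AA^*) = \|\hhat\|_{L^2(\nuhat\otimes\nuhat)}^2$ is the same computation reorganised. For (b) and (c) the paper simply cites Lemmas~2.7 and~1.3 of Shiga--Tanaka~\cite{ShigaTanaka1985}, whose proofs are precisely the product-of-orthogonal-martingales and Carleman--Fredholm arguments you wrote out; so you have supplied the details the paper delegates to the reference, with nothing substantively different.
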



\subsection{Central limit theorem} \label{secchap2multitype:CLT}

We can now present the first main result of this work.
For $\alpha \in \Kbd$, let $L^2_c(\Omega_d,\nu_\alpha)$ be the collection of $\phibd \in L^2(\Omega_d,\nu_\alpha)$ such that $\int_{\Omega_d} \phibd(\omega) \nu_\alpha(d\omega) = 0$.
Denote by $\Amc_\alpha$ the collection of all measurable maps $\phi : \Cmc_d \to \Rmb$ such that $\phibd \doteq \phi(X_*) \in L^2_c(\Omega_d, \nu_\alpha)$.
For $\phi \in \Amc_\alpha$, let 
$\xi^N_\alpha (\phi) = \frac{1}{\sqrt{N_\alpha}} \sum_{i \in \Nalpha} \phi (Z^{i,N})$. 
Given $\alpha \in \Kbd$ and $\phibd \in L^2(\Omega_d,\nu_\alpha)$, define lifted function $\phihatbd_\alpha \in L^2(\Omega_d^K, \nuhat)$ as follows:
\begin{equation}
	\phihatbd_\alpha(\omega) = \phibd(\omega_\alpha), \quad \omega = (\omega_1, \dotsc, \omega_K) \in \Omega_d^K. \label{eqchap2multitype:phihat alpha}
\end{equation}

\begin{theorem} \label{thmchap2multitype:CLT}
	$\{\xi_\alpha^N (\phi) : \phi \in \Amc_\alpha, \alpha \in \Kbd\}$ converges as $N \to \infty$ to a mean $0$ Gaussian field $\{\xi_\alpha(\phi) : \phi \in \Amc_\alpha, \alpha \in \Kbd\}$ in the sense of convergence of finite dimensional distributions, where for $\alpha, \gamma \in \Kbd$ and $\phi \in \Amc_\alpha, \psi \in \Amc_\gamma$,
	\begin{equation*}
		\Emb [\xi_\alpha(\phi) \xi_\gamma(\psi)] = \langle (I-A)^{-1} \phihatbd_\alpha, (I-A)^{-1} \boldsymbol{\psihat}_\gamma \rangle_{L^2(\Omega_d^K, \nuhat)},
	\end{equation*}
	with $\phibd = \phi(X_*)$, $\phihatbd_\alpha$ defined as in~\eqref{eqchap2multitype:phihat alpha}, and $\boldsymbol{\psi}$, $\boldsymbol{\psihat}_\gamma$ given similarly.
\end{theorem}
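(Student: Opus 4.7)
The plan is to follow a Girsanov-based change of measure approach, generalizing the classical Shiga--Tanaka~\cite{ShigaTanaka1985} and Sznitman~\cite{Sznitman1984} arguments from the single-type case to the multi-type setting, and exploiting a lifting device that makes the $K$-fold product space $\Omega_d^K$ with law $\nuhat$ the natural arena. Let $\Pbar^N$ be the probability measure on $(\Omega,\Fmc)$ defined by the Girsanov density that makes, for each $i \in \Nalpha$,
$$\Wtil^i_t \doteq W^i_t + \int_0^t \sum_{\gamma=1}^K \bigl[ \langle b_{\alpha\gamma}(Z^{i,N}_s,\cdot), \mu^{\gamma,N}_s\rangle - \langle b_{\alpha\gamma}(Z^{i,N}_s,\cdot), \mu^\gamma_s\rangle \bigr] \, ds$$
a Brownian motion; boundedness of $b_{\alpha\gamma}$ (Condition~\ref{condchap2multitype:cond1}(c)) ensures Novikov's condition and hence that $\Pbar^N$ is a well-defined probability measure. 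Under $\Pbar^N$, equation~\eqref{eqchap2multitype:ZiNt} collapses to an independent copy of~\eqref{eqchap2multitype:Xit} for each $i$, so the joint law of $\{(\Wtil^i, Z^{i,N})\}_{i \in \Nbd}$ is exactly the product $\Pmb^N$, and $\xi^N_\alpha(\phi)$ becomes a normalized sum of $N_\alpha$ i.i.d.\ centered random variables $\phi(Z^{i,N})$, $i \in \Nalpha$, with marginal $\mu^\alpha$.

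Writing $L^N = d\Pbd^N/d\Pbar^N$, direct computation gives
$$\log L^N = \sum_{\alpha=1}^K \sum_{i\in\Nalpha} \int_0^T \sum_{\gamma=1}^K \frac{1}{N_\gamma}\sum_{j\in\Ngamma} b_{\alpha\gamma,s}(Z^{i,N}_s, Z^{j,N}_s)\cdot d\Wtil^i_s - \tfrac12 Q^N,$$
where $Q^N$ is the corresponding quadratic-variation term and $b_{\alpha\gamma,s}$ is as in~\eqref{eqchap2multitype:b alpha gamma t}. The key identity is then
$$\Emb_{\Pbd^N}\bigl[e^{i\sum_j t_j \xi^N_{\alpha_j}(\phi_j)}\bigr] = \Emb_{\Pbar^N}\bigl[e^{i\sum_j t_j \xi^N_{\alpha_j}(\phi_j)}\, L^N\bigr],$$
so the problem reduces to establishing joint weak convergence of $(\{\xi^N_{\alpha_j}(\phi_j)\}_j, \log L^N)$ under $\Pbar^N$ and exchanging the limit with the expectation (uniform integrability of $L^N$ coming from the bounded drift).

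The heart of the argument is identifying the limit of $\log L^N$. After re-indexing, the stochastic-integral portion is a sum over pairs $(\alpha,\gamma)$ of double sums of the form $\frac{1}{\sqrt{N_\alpha N_\gamma}} \cdot \sqrt{\lambda_\alpha/\lambda_\gamma} \sum_{i\in\Nalpha}\sum_{j\in\Ngamma} h_{\alpha\gamma}(V^i, V^j)$ with $V^i = (\Wtil^i, Z^{i,N})$ and $h_{\alpha\gamma}$ as in~\eqref{eqchap2multitype:h alpha gamma}; this is a multi-type $U$-statistic. I would handle it in two stages, as the introduction anticipates. First, in the idealized case $N_\alpha = N$ for all $\alpha \in \Kbd$, group the particles into $N$ i.i.d.\ $K$-tuples $\Vbd^n = (V^{i_1(n)},\dotsc,V^{i_K(n)})$ with law $\nuhat$; the double sums become degenerate second-order $U$-statistics on $\Omega_d^K$ based on the lifted kernels $\hhat_{\alpha\gamma}$, to which the Dynkin--Mandelbaum theorem on symmetric statistics~\cite{Dynkin1983} applies, and together with the marginal CLT for $\xi^N_{\alpha_j}(\phi_j)$ one obtains joint Gaussian convergence of $(\xi^N, \log L^N)$ under $\Pbar^N$, with $\log L^N \Rightarrow M - \tfrac12 \Emb[M^2]$ for a centered Gaussian $M$ built from the operator $A$. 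Second, one removes the equal-count restriction for the general case $N_\alpha/N \to \lambda_\alpha$ by a careful coupling and block-rescaling argument that preserves independence under $\Pbar^N$; the Lipschitz and boundedness bounds of Condition~\ref{condchap2multitype:cond1}(c) control the discrepancy, and the $\sqrt{\lambda_\alpha/\lambda_\gamma}$ factors in the definition of $h_{\alpha\gamma}$ surface at exactly this step.

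Once the joint limit is jointly Gaussian, the identity for $\Emb_{\Pbd^N}[e^{i\sum_j t_j \xi^N_{\alpha_j}(\phi_j)}]$ becomes an exponential tilt of a Gaussian characteristic function, so the limit is again Gaussian; the covariance is read off by solving a linear fixed-point equation $C = C_0 + A\,C\,A^* + (\text{cross terms})$ coming from the tilt, whose unique solution on $L^2(\Omega_d^K,\nuhat)$ is $\langle (I-A)^{-1}\phihatbd_\alpha, (I-A)^{-1}\psihatbd_\gamma\rangle$ thanks to the invertibility of $I-A$ supplied by Lemma~\ref{lemchap2multitype:trace}(c). The main obstacle I foresee is the second stage: passing from the clean $K$-fold lifting available only when all $N_\alpha$ are equal to the genuinely asymmetric regime, while simultaneously tracking joint convergence of $\log L^N$ with the field $\{\xi^N_\alpha(\phi)\}$ and verifying that the limit operator is indeed $(I-A)^{-1}$ rather than some ad-hoc limit acting only on the diagonal of $L^2(\Omega_d^K,\nuhat)$; this is precisely where the multi-type analysis departs substantively from the classical single-type Shiga--Tanaka framework.
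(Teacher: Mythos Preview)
Your overall architecture---Girsanov change of measure, reduction to independent particles, Dynkin--Mandelbaum asymptotics for the Radon--Nikodym derivative, then passing the limit through the characteristic-function identity via Scheff\'e---matches the paper's proof. But there is a genuine error in how you identify the limit of $\log L^N$.

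You write that $\log L^N \Rightarrow M - \tfrac12 \Emb[M^2]$ for a centered Gaussian $M$, and that the joint limit of $(\xi^N,\log L^N)$ is jointly Gaussian. This is false. The stochastic-integral part $J^{N,1}(T)$ is a \emph{degenerate second-order} $U$-statistic in the lifted variables, and Dynkin--Mandelbaum gives $J^{N,1}(T)\Rightarrow I_2(\hhat^{sym})$, a second Wiener chaos element, not a Gaussian. Combined with the quadratic-variation limit, one obtains $J^N(T)\Rightarrow J \doteq \tfrac12\bigl[I_2(f)-\mathrm{Trace}(AA^*)\bigr]$ with $f$ as in~\eqref{eqchap2multitype:f}. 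The pair $(I_1(\phihatbd),J)$ is \emph{not} jointly Gaussian, so your ``exponential tilt of a Gaussian characteristic function'' reasoning and the fixed-point equation for the covariance both break down. The Gaussianity of the final limit is instead a nontrivial algebraic fact: Lemma~\ref{lemchap2multitype:trace} gives $\mathrm{Trace}(A^n)=0$ for $n\ge 2$, and the Shiga--Tanaka identity (Lemma~\ref{lemappendixchap2multitype:Shiga and Tanaka}) then yields directly
\[
\Emb\bigl[\exp\bigl(iI_1(\phihatbd)+J\bigr)\bigr]=\exp\Bigl(-\tfrac12\|(I-A)^{-1}\phihatbd\|^2_{L^2(\nuhat)}\Bigr),
\]
with no fixed-point argument needed.

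On the passage from equal to unequal $N_\alpha$: the paper does not use coupling or block-rescaling. It proves a standalone lemma (Lemma~\ref{lemchap2multitype:unbalanced populations}) by approximating each kernel $\psibd_{\alpha\gamma}$ and $\phibd$ by finite tensor products in a CONS $\{\one,e_\alpha^1,e_\alpha^2,\dotsc\}$ of $L^2(\nu_\alpha)$; for such finite-rank kernels the multi-type statistic factors into products of independent single-type sums, each handled by the ordinary Dynkin--Mandelbaum theorem, and an $L^2$ approximation (Lemma~\ref{lemchap2multitype:convergence via approximation}) closes the gap uniformly in $N$. This is cleaner than a coupling and is what actually makes the asymmetric case go through.
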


Proof of the theorem is given in Section~\ref{secchap2multitype:proof of CLT}.


\subsection{An application to single-type particle system} \label{secchap2multitype:application}

Consider the special case of~\eqref{eqchap2multitype:ZiNt} where $K = 1$.
Here, Theorem~\ref{thmchap2multitype:CLT} can be used to describe joint asymptotic distributions of suitably scaled sums formed from disjoint sub-populations of the particle system.
As an illustration, consider the single-type system given through the following collection of equations:
\begin{equation}\label{eqchap2multitype:ZiNt application}
	\ZiNt = Z_0^{i,N} + \int_0^t f(s,\ZiNs) \, ds + \frac{1}{N} \int_0^t \sum_{j=1}^N b(\ZiNs,\ZjNs) \, ds + W^i_t, \quad i \in \Nbd.
\end{equation}
Suppose that we are interested in the joint asymptotic distribution of $(\xi^N_1(\phi_1), \xi^N_2(\phi_2))$ for suitable path functionals $\phi_1$ and $\phi_2$, where 
\begin{equation*}
	\xi^N_1(\phi_1) = \frac{1}{\sqrt{\lfloor \lambda N \rfloor}} \sum_{i=1}^{\lfloor \lambda N \rfloor} \phi_1(Z^{i,N}), \quad \xi^N_2(\phi_2) = \frac{1}{\sqrt{N - \lfloor \lambda N \rfloor}} \sum_{i=\lfloor \lambda N \rfloor + 1}^N \phi_2(Z^{i,N}), \quad \lambda \in (0,1).
\end{equation*}
The propagation of chaos property in~\eqref{eqchap2multitype:propagation of chaos} says that any finite  collection of summands on the right sides of the above display are asymptotically mutually independent.
However, in general, this does not guarantee the asymptotic independence of $\xi^N_1(\phi_1)$ and $\xi^N_2(\phi_2)$.
In fact, one can use Theorem~\ref{thmchap2multitype:CLT} with $K=2$ to show that $(\xi^N_1(\phi_1), \xi^N_2(\phi_2))$ converges in distribution to a bivariate Gaussian random variable and provide expressions for the asymptotic covariance matrix.
We illustrate this below through a toy example.

\begin{example}
	Suppose $Z^{i,N}_0 \equiv 0$, $f \equiv 0$, $T=1$, $d=1$ and $b(x,y) = \beta(y)$, where $\beta$ satisfies Condition~\ref{condchap2multitype:cond1}$(2)$ with $b_{\alpha\gamma}$ replaced by $\beta$. 
	Further suppose that $\beta$ is an odd function, namely $\beta(y) = - \beta(-y)$ for all $y \in \Rmb$. 
	Due to the special form of $b$, one can explicitly characterize the measure $\nu_i$, $i = 1,2$.
	Indeed, noting that for a one dimensional BM, $\{W_t\}_{t \in [0,1]}$,
	\begin{equation*}
		W_t = \lambda \int_0^t \langle \beta, \mu_s \rangle \, ds + (1 - \lambda) \int_0^t \langle \beta, \mu_s \rangle \, ds + W_t, \quad \mu_s = \Lmc(W_s),
	\end{equation*}
	we see that $\nu_i = \Lmc(W, W)$ for $i=1,2$.
	Consider for $\omega \in \Cmc_1$,
	\begin{equation*}
		\phi_i(\omega) = \kappa_i \Big(\omega_1 - \int_0^1 \beta(\omega_t) \, dt\Big), \quad \kappa_i \in \Rmb, \quad i = 1,2.
	\end{equation*}
	Note that $\phi_i \in \Amc_i$ since $\beta$ is odd, $i=1,2$.
	For this example one can explicitly describe the asymptotic distribution of $(\xi^N_1(\phi_1), \xi^N_2(\phi_2))$ by regarding~\eqref{eqchap2multitype:ZiNt application} as a $2$-type population with $N_1 = \lfloor N \lambda \rfloor$ and $N_2 = N - \lfloor N \lambda \rfloor$.
	Following~~\eqref{eqchap2multitype:hhat} and~\eqref{eqchap2multitype:phihat alpha}, define for $\omega = (\omega_1,\omega_2) \in \Omega_d^2$,
	\begin{equation*}
		\phihatbd_i(\omega) = \kappa_i \Big(X_{*,1}(\omega_i) - \int_0^1 \beta(X_{*,t}(\omega_i)) \, dt\Big), \quad i =1,2,
	\end{equation*}
	and for $\omega' = (\omega'_1,\omega'_2) \in \Omega_d^2$,
	\begin{align*}
		\hhat(\omega',\omega) & = \int_0^1 \Big( \lambda \beta(X_{*,t}(\omega_1)) + \sqrt{\lambda(1-\lambda)} \beta(X_{*,t}(\omega_2)) \Big) \, dW_{*,t}(\omega_1') \\
		& \quad + \int_0^1 \Big( \sqrt{\lambda(1-\lambda)} \beta(X_{*,t}(\omega_1)) + (1-\lambda) \beta(X_{*,t}(\omega_2)) \Big) \, dW_{*,t}(\omega_2').
	\end{align*}
	The operator $A$ is then defined by~\eqref{eqchap2multitype:A}.
	The special form of $\phi_i$ allows us to determine $(I-A)^{-1} \phihatbd_i$, $i = 1,2$.
	Indeed, for $\omega = (\omega_1,\omega_2) \in \Omega_d^2$, let 
	\begin{align*}
		\psibd_1(\omega) & = \kappa_1 \Big[ X_{*,1}(\omega_1) + \int_0^1 \Big( \sqrt{\lambda(1-\lambda)} \beta(X_{*,t}(\omega_2)) - (1-\lambda) \beta(X_{*,t}(\omega_1)) \Big) \, dt \Big], \\
		\psibd_2(\omega) & = \kappa_2 \Big[ X_{*,1}(\omega_2) + \int_0^1 \Big( \sqrt{\lambda(1-\lambda)} \beta(X_{*,t}(\omega_1)) - \lambda \beta(X_{*,t}(\omega_2)) \Big) \, dt \Big].
	\end{align*}
	Then 
	\begin{align*}
		A \psibd_1(\omega) & = \int_{\Omega_d^2} \hhat(\omega',\omega) \psibd_1(\omega') \nuhat(d\omega') = \kappa_1 \int_0^1 \Big( \lambda \beta(X_{*,t}(\omega_1)) + \sqrt{\lambda(1-\lambda)} \beta(X_{*,t}(\omega_2)) \Big) \, dt, \\
		A \psibd_2(\omega) & = \kappa_2 \int_0^1 \Big( \sqrt{\lambda(1-\lambda)} \beta(X_{*,t}(\omega_1)) + (1-\lambda) \beta(X_{*,t}(\omega_2)) \Big) \, dt.
	\end{align*}
	This shows that $\psibd_i = (I-A)^{-1} \phihatbd_i$ for $i = 1$, $2$.
	From Theorem~\ref{thmchap2multitype:CLT} we then have that $(\xi^N_1(\phi_1), \xi^N_2(\phi_2))$ converges in distribution to a bivariate Gaussian random variable $(X,Y)$ with mean $0$ and covariance matrix $(\sigma_{ij})_{i,j=1}^2$, where
	\begin{align*}
		\sigma_{11} & = \kappa_1^2 \Emb \Big[ \Big( W_1 - (1-\lambda) \int_0^1 \beta(W_t) \, dt \Big)^2 + \lambda(1-\lambda) \Big( \int_0^1 \beta(W_t) \, dt \Big)^2 \Big], \\
		\sigma_{22} & = \kappa_2^2 \Emb \Big[ \Big( W_1 - \lambda \int_0^1 \beta(W_t) \, dt \Big)^2 + \lambda(1-\lambda) \Big( \int_0^1 \beta(W_t) \, dt \Big)^2 \Big], \\
		\sigma_{12} & = \sqrt{\lambda(1-\lambda)} \kappa_1\kappa_2 \Emb \Big[ \Big(2W_1 - \int_0^1 \beta(W_t) \, dt \Big) \int_0^1 \beta(W_t) \, dt \Big],
	\end{align*}
	and $W$ is a one dimensional BM.
\end{example}


\subsection{Asymptotics of symmetric statistics} \label{secchap2multitype:asymptotics of symmetric statistics}
The proofs of Theorems~\ref{thmchap2multitype:CLT} and~\ref{thmchap2multitype:general CLT} in Section~\ref{secchap2multitype:generalization of CLT} crucially rely on certain classical results from~\cite{Dynkin1983} on limit laws of degenerate symmetric statistics.
In this section we briefly review these results.

Let $\Smb$ be a Polish space and let $\{X_n\}_{n=1}^\infty$ be a sequence of i.i.d. $\Smb$-valued random variables having common probability law $\nu$.
For $k \in \Nmb$, let $L^2(\nu^{\otimes k})$ be the space of all real valued square integrable functions on $(\Smb^k, \Bmc(\Smb)^{\otimes k}, \nu^{\otimes k})$.
Denote by $L^2_c(\nu^{\otimes k})$ the subspace of {\em centered} functions, namely $\phi \in L^2(\nu^{\otimes k})$ such that for all $1 \le j \le k$,
\begin{equation*}
	\int_\Smb \phi(x_1,\dots,x_{j-1},x,x_{j+1},\dotsc,x_k) \, \nu(dx) = 0, \quad \nu^{\otimes k-1} \text{ a.e.\ } (x_1,\dots,x_{j-1},x_{j+1},\dots,x_K).
\end{equation*}
Denote by $L^2_{sym}(\nu^{\otimes k})$ the subspace of symmetric functions, namely $\phi \in L^2(\nu^{\otimes k})$ such that for every permutation $\pi$ on $\{1,\dots,k\}$,
\begin{equation*}
	\phi(x_1,\dots,x_k) = \phi(x_{\pi(1)},\dots,x_{\pi(k)}), \quad \nu^{\otimes k} \text{ a.e.\ } (x_1,\dots,x_k).
\end{equation*}
Also,  denote by $L^2_{c,sym}(\nu^{\otimes k})$ the subspace of centered symmetric functions in $L^2(\nu^{\otimes k})$, namely $L^2_{c,sym}(\nu^{\otimes k}) = L^2_c(\nu^{\otimes k}) \cap L^2_{sym}(\nu^{\otimes k})$.
Given $\phi_k \in L^2_{c,sym}(\nu^{\otimes k})$ define the symmetric statistic $\sigma^n_k (\phi_k)$ as
\begin{equation*}
	\sigma^n_k (\phi_k) =
	\begin{cases}
		\displaystyle \sum_{1 \le i_1 < i_2 < \dotsb < i_k \le n} \phi_k(X_{i_1},\dots,X_{i_k}) & \text{for } n \ge k, \\
		0 & \text{for } n<k.
	\end{cases}
\end{equation*}
In order to describe the asymptotic distributions of such statistics consider a Gaussian field $\{I_1(h); h \in L^2(\nu)\}$ such that
\begin{equation*}
	\Emb \big( I_1(h) \big) = 0, \: \Emb \big( I_1(h)I_1(g) \big) = \langle h,g \rangle_{L^2(\nu)}, \quad h,g \in L^2(\nu).
\end{equation*}
For $h \in L^2(\nu)$, define $\phi_k^h \in L^2_{sym}(\nu^{\otimes k})$ as
\begin{equation*}
	\phi_k^h(x_1,\dots,x_k) = h(x_1) \dotsm h(x_k)
\end{equation*}
and set $\phi_0^h = 1$.

The MWI of $\phi_k^h$, denoted as $I_k(\phi_k^h)$, is defined through the following formula.
For $k \ge 1$,
\begin{equation} \label{eqchap2multitype:MWI formula}
	I_k(\phi_k^h) = \sum_{j=0}^{\lfloor k/2 \rfloor} (-1)^j C_{k,j} ||h||^{2j}_{L^2(\nu)} (I_1(h))^{k-2j}, \text{ where } C_{k,j} = \frac{k!}{(k-2j)! 2^j j!}, j=0,\dots,\lfloor k/2 \rfloor.
\end{equation}
The following representation gives an equivalent way to characterize the MWI of $\phi_k^h $:
\begin{equation*}
	\sum_{k=0}^{\infty} \frac{t^k}{k!} I_k(\phi_k^h) = \exp \Big( tI_1(h) - \frac{t^2}{2} ||h||^2_{L^2(\nu)} \Big), t \in \Rmb,
\end{equation*}
where we set $I_0(\phi_0^h) = 1$. 
We extend the definition of $I_k$ to the linear span of $\{\phi_k^h, h \in L^2(\nu)\}$ by linearity. 
It can be checked that for all $f$ in this linear span,
\begin{equation} \label{eqchap2multitype:moments of MWI}
	\Emb(I_k(f))^2 = k! \, ||f||^2_{L^2(\nu^{\otimes k})}.
\end{equation}
Using this identity and standard denseness arguments, the definition of $I_k(f)$ can be extended to all $f \in L^2_{sym}(\nu^{\otimes k})$ and the identity $\eqref{eqchap2multitype:moments of MWI}$ holds for all $f \in L^2_{sym}(\nu^{\otimes k})$. 
The following theorem is taken from~\cite{Dynkin1983}.

\begin{theorem}[Dynkin-Mandelbaum~\cite{Dynkin1983}] \label{thmchap2multitype:Dynkin}
	Let $\{ \phi_k \}_{k=1}^\infty$ be such that, for each $k \ge 1$, $\phi_k \in L^2_{c,sym}(\nu^{\otimes k})$. 
	Then the following convergence holds as $n \to \infty$:
	\begin{equation*}
		\Big( n^{-\frac{k}{2}} \sigma^n_k (\phi_k) \Big)_{k \ge 1} \Rightarrow \Big( \frac{1}{k!} I_k(\phi_k) \Big)_{k \ge 1}
	\end{equation*}
	as a sequence of $\Rmb^\infty$ valued random variables.
\end{theorem}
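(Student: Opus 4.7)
The plan is to prove the statement first for product test functions of the form $\phi_k = h^{\otimes k}$ with $h \in L^2_c(\nu)$ (i.e.\ $\int h\,d\nu = 0$), and then extend to general $\phi_k \in L^2_{c,sym}(\nu^{\otimes k})$ by a density/isometry argument. For the product case, the strategy is to reduce the symmetric statistic to a polynomial in the power sums $S_{n,j} = \sum_{i=1}^n h(X_i)^j$, $j=1,\dots,k$. By a standard Newton-type combinatorial identity (or equivalently by expanding $(S_{n,1})^k$ and subtracting off terms in which two or more indices coincide), one obtains
\begin{equation*}
k!\,\sigma^n_k(h^{\otimes k}) = P_k(S_{n,1},S_{n,2},\dots,S_{n,k})
\end{equation*}
for an explicit polynomial $P_k$. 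After normalizing by $n^{k/2}$, classical LLN gives $S_{n,j}/n \to \int h^j\,d\nu$ a.s.\ for $j\ge 2$, while the CLT gives $S_{n,1}/\sqrt{n} \Rightarrow I_1(h)$ with variance $\|h\|^2_{L^2(\nu)}$. A monomial $\prod_j S_{n,j}^{m_j}$ normalized by $n^{k/2}$ has a nontrivial limit only when it contains no factor with $j\ge 3$ and when $m_1 + 2m_2 = k$; the surviving combinatorics reproduce exactly the Hermite-type expansion in~\eqref{eqchap2multitype:MWI formula}, yielding $k!\,n^{-k/2}\sigma^n_k(h^{\otimes k}) \Rightarrow I_k(\phi_k^h)$.

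For joint convergence over several levels $k$ simultaneously, I would work with test functions built from a finite family $h^{(1)},\dots,h^{(L)} \in L^2_c(\nu)$ and rely on the fact that all symmetric statistics of product form can be written as polynomials in the (joint) mixed power sums $\sum_i h^{(\ell_1)}(X_i)\cdots h^{(\ell_r)}(X_i)$. The multivariate CLT then gives joint convergence of $(n^{-1/2}\sum_i h^{(\ell)}(X_i))_\ell$ to the Gaussian field $(I_1(h^{(\ell)}))_\ell$, while all mixed power sums of total degree $\ge 2$ converge a.s.\ to the corresponding $\nu$-integrals; the continuous mapping theorem then delivers joint convergence of the normalized symmetric statistics to the corresponding vector of multiple Wiener integrals.

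To pass from products to general $\phi_k \in L^2_{c,sym}(\nu^{\otimes k})$, the key ingredient is a matching uniform $L^2$-bound on the prelimit,
\begin{equation*}
\Emb\bigl[(n^{-k/2}\sigma^n_k(\phi_k))^2\bigr] \le C_k\,\|\phi_k\|^2_{L^2(\nu^{\otimes k})},
\end{equation*}
with $C_k$ matching the isometry constant $k!$ of the limit. This is established by expanding the square into a double sum over ordered $k$-tuples, grouping terms by the cardinality of overlap of the two index sets, and using centering of $\phi_k$ along every coordinate (which is preserved after symmetrization) to kill every group except the fully-overlapping one, whose contribution is $O(n^k)$ with the right constant. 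Combined with the isometry $\Emb(I_k(f))^2 = k!\,\|f\|^2_{L^2(\nu^{\otimes k})}$ and the density of the linear span of $\{h^{\otimes k} : h \in L^2_c(\nu)\}$ in $L^2_{c,sym}(\nu^{\otimes k})$, this bound allows a standard approximation argument: for each $\phi_k$ pick $\phi_k^{(m)}$ in the span with $\|\phi_k^{(m)} - \phi_k\|_{L^2(\nu^{\otimes k})} \to 0$, deduce convergence for $\sigma^n_k(\phi_k^{(m)})$ from the product case, and control the residual $n^{-k/2}\sigma^n_k(\phi_k - \phi_k^{(m)})$ uniformly in $n$ by the variance estimate.

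The main obstacle I anticipate is precisely the uniform $L^2$-estimate with the sharp constant: one has to verify that the combinatorial cancellation from coordinate-wise centering produces exactly the same $k!$ that appears in the MWI isometry, so that the prelimit and limit live in isometric Hilbert spaces and the density extension is tight. The joint-in-$k$ convergence then follows from the finite-dimensional reduction above, the $L^2$-bound (applied level by level), and a diagonal approximation — the isometry guarantees that the vector of MWIs is the correct joint limit and that no mass is lost in passing to the limit.
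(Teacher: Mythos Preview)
The paper does not prove this theorem. It is quoted verbatim from Dynkin--Mandelbaum~\cite{Dynkin1983} as a known result (``The following theorem is taken from~\cite{Dynkin1983}'') and used as a black box in the proofs of Lemmas~\ref{lemchap2multitype:balanced populations} and~\ref{lemchap2multitype:unbalanced populations}. So there is no proof in the paper to compare your proposal against.

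That said, your outline is essentially the classical route to the Dynkin--Mandelbaum theorem and is sound in spirit. Two small points worth tightening. First, in the product case $\phi_k=h^{\otimes k}$ with $h\in L^2_c(\nu)$ only, the power sums $S_{n,j}=\sum_i h(X_i)^j$ for $j\ge 3$ need not satisfy a LLN (you do not have $h\in L^j$); the clean fix is to truncate $h$ first, prove convergence for bounded $h$, and then pass to general $h\in L^2_c(\nu)$ via the $L^2$ bound you describe. Second, the variance computation actually gives
\[
\Emb\bigl[(n^{-k/2}\sigma^n_k(\phi_k))^2\bigr]=\frac{1}{n^k}\binom{n}{k}\,\|\phi_k\|^2_{L^2(\nu^{\otimes k})}\le \frac{1}{k!}\,\|\phi_k\|^2_{L^2(\nu^{\otimes k})},
\]
which matches $\Emb\bigl[(I_k(\phi_k)/k!)^2\bigr]=\|\phi_k\|^2_{L^2(\nu^{\otimes k})}/k!$ exactly; so the relevant constant is $1/k!$, not $k!$. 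With these adjustments your density/approximation argument (via Lemma~\ref{lemchap2multitype:convergence via approximation} or an equivalent three-epsilon step) goes through.
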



\subsection{A limit theorem for statistics of multi-type populations} \label{secchap2multitype:generalization of CLT}

In this section we provide a CLT for statistics of the form given on the left side of~\eqref{eqchap2multitype:LLN}.

Denote by $L^2_c(\Omega_d^K, \nuhat)$ the subspace of  $\phibd \in L^2(\Omega_d^K, \nuhat)$ such that for each $\alpha \in \Kbd$,
\begin{equation*}
	\int_{\Omega_d} \phibd(\omega_1,\dots,\omega_K) \, \nu_\alpha(d\omega_\alpha) = 0, \nu_1 \otimes \dotsb \otimes \nu_{\alpha-1} \otimes \nu_{\alpha+1} \otimes \dotsb \otimes \nu_K \text{ a.e.\ } (\omega_1,\dots,\omega_{\alpha-1},\omega_{\alpha+1},\dots,\omega_K).
\end{equation*}
Denote by $\Amc^K$ the collection of measurable maps $\phi : \Cmc_d^K \to \Rmb$ such that $(\omega_1,\dotsc,\omega_K) \mapsto \phibd(\omega_1,\dotsc,\omega_K) \doteq \phi(X_*(\omega_1),\dotsc,X_*(\omega_K)) \in L^2_c(\Omega_d^K, \nuhat)$.
For $\phi \in \Amc^K$ and with $\{ Z^{i,N} \}$ as in~\eqref{eqchap2multitype:ZiNt}, let 
\begin{equation} \label{eqchap2multitype:xi N}
	\xi^N(\phi) = \frac{1}{\sqrt{N_1 \dotsm N_K}} \sum_{i_1 \in \boldsymbol{N_1}} \dotsb \sum_{i_K \in \boldsymbol{N_K}} \phi(Z^{i_1,N},\dots,Z^{i_K,N}).
\end{equation}
We will like to study the asymptotic behavior of $\xi^N(\phi)$ as $N \to \infty$.

Let $\{I_k(h), h \in L^2_{sym}(\Smb^k, \nu^{\otimes k})\}_{k \ge 1}$ be the MWI defined as in the previous section with $\Smb = \Omega_d^K$ and $\nu = \nuhat$, on some probability space $(\Omegatil,\tilde{\Fmc},\tilde{\Pmb})$.
Recall $\hhat$ defined in~\eqref{eqchap2multitype:hhat}.
Define $f \in L^2(\Omega_d^K \times \Omega_d^K, \nuhat \otimes \nuhat)$ as follows: For $(\omega,\omega') \in \Omega_d^K \times \Omega_d^K$
\begin{equation} \label{eqchap2multitype:f} 
	f(\omega, \omega') = \hhat(\omega, \omega') + \hhat(\omega', \omega) - \int_{\Omega_d^K} \hhat(\omega'',\omega) \hhat(\omega'', \omega') \, \nuhat(d\omega'').
\end{equation}
Define a random variable $J$ on $(\Omegatil,\tilde{\Fmc},\tilde{\Pmb})$ as
\begin{equation} \label{eqchap2multitype:J} 
	J = \frac{1}{2} [I_2(f)-\textnormal{Trace}(AA^*)],
\end{equation}
where $A$ is as introduced in~\eqref{eqchap2multitype:A}.
For $\phibd \in L^2_c(\Omega_d^K, \nuhat)$, define lifted symmetric function $\phihatbd^{sym} \in L^2((\Omega_d^K)^K,\nuhat^{\otimes K})$ as follows: 
For $\omega^j = (\omega_1^j,\dotsc,\omega_K^j) \in \Omega_d^K, j \in \Kbd$,
\begin{equation} \label{eqchap2multitype:phi hat sym}
	\phihatbd^{sym}(\omega^1,\omega^2,\dotsc,\omega^K) = \frac{1}{K!} \sum_\pi \phibd(\omega^{\pi(1)}_1,\omega^{\pi(2)}_2,\dotsc,\omega^{\pi(K)}_K),
\end{equation}
where the summation is taken over all permutations $\pi$ on $\Kbd$.
The following result characterizes the asymptotic distribution of $\xi^N(\phi)$.

\begin{theorem} \label{thmchap2multitype:general CLT}
	$\Emb_{\tilde{\Pmb}} \exp(J) = 1$.
	Denote by $\tilde{\Qmb}$ the probability measure on $(\Omegatil,\tilde{\Fmc})$ such that $d\tilde{\Qmb} = \exp(J) \cdot d\tilde{\Pmb}$.
	Then for $\phi \in \Amc^K$, $\Lmc(\xi^N(\phi)) \to \tilde{\Qmb} \circ (I_K(\phihatbd^{sym}))^{-1}$ as $N \to \infty$, where $\phibd = \phi(X_*(\cdot),\dotsc,X_*(\cdot))$  and $\phihatbd^{sym}$ is defined as in~\eqref{eqchap2multitype:phi hat sym}.
\end{theorem}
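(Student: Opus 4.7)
My plan is to extend the Girsanov-based change-of-measure strategy underlying the proof of Theorem~\ref{thmchap2multitype:CLT} from single sums to $K$-fold statistics, by combining a change of measure that decouples the particles with Theorem~\ref{thmchap2multitype:Dynkin} applied to a lifting into $\Omega_d^K$. Concretely, I would change measure from $\Pbd$ to the product measure $\Pmb^N$ on $\Omega_d^N$ under which the canonical processes $V^i=(W^i,X^i)$ are independent with $\Lmc(V^i)=\nu_{\pbd(i)}$; establish the joint weak convergence under $\Pmb^N$ of $(\xi^N(\phi),\Lambda^N)$, where $\Lambda^N \doteq d\Pbd/d\Pmb^N$, to $(I_K(\phihatbd^{sym}),\exp(J))$ on $(\Omegatil,\tilde{\Fmc},\tilde{\Pmb})$; and then conclude via the change-of-measure formula
\[
\Emb_\Pbd\big[F(\xi^N(\phi))\big] \;=\; \Emb_{\Pmb^N}\big[F(\xi^N(\phi))\,\Lambda^N\big]
\]
for bounded continuous $F$, together with a uniform-integrability argument.

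Comparing the drifts in~\eqref{eqchap2multitype:ZiNt} and~\eqref{eqchap2multitype:Xit}, Girsanov's theorem gives $\Lambda^N = \exp(M^N - \tfrac{1}{2}\langle M^N\rangle)$, where $M^N$ is a sum of stochastic integrals of the centred kernels $b_{\alpha\gamma,s}$ against the $W^i$. Expressed in terms of the lifted kernels $\hhat_{\alpha\gamma}$ of~\eqref{eqchap2multitype:hhat}, $M^N$ is, up to an asymptotically negligible remainder, a degenerate order-two U-statistic in the $V^i$ whose symmetrization yields the first two summands $\hhat(\omega,\omega')+\hhat(\omega',\omega)$ of $f$ in~\eqref{eqchap2multitype:f}, while the off-diagonal part of $-\tfrac{1}{2}\langle M^N\rangle$ supplies the third summand $-\int\hhat(\omega'',\omega)\hhat(\omega'',\omega')\,\nuhat(d\omega'')$. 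The diagonal contributions of both $M^N$ and $\langle M^N\rangle$ combine into the deterministic correction $-\tfrac{1}{2}\textnormal{Trace}(AA^*)$ identified in Lemma~\ref{lemchap2multitype:trace}(a). Since the $b_{\alpha\gamma}$ are bounded, $\Lambda^N$ is a true martingale with $\Emb_{\Pmb^N}\Lambda^N = 1$, and a standard uniform-integrability argument then yields $\Emb_{\tilde\Pmb}\exp(J) = 1$, making $\tilde\Qmb$ a probability measure.

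To analyze $\xi^N(\phi)$ under $\Pmb^N$ I would first treat the simpler case $N_1=\dotsb=N_K=M$, where the particles can be grouped into $M$ i.i.d.\ $\Omega_d^K$-valued vectors $\Xbd^{(1)},\dotsc,\Xbd^{(M)}$ with common law $\nuhat$. Expanding the sum according to whether the tuples $(i_1,\dotsc,i_K)$ correspond to distinct grouped vectors or not yields
\[
\xi^N(\phi) \;=\; \frac{K!}{M^{K/2}}\sum_{j_1<\dotsb<j_K}\phihatbd^{sym}(\Xbd^{(j_1)},\dotsc,\Xbd^{(j_K)}) \;+\; R^N,
\]
where $\phihatbd^{sym}\in L^2_{c,sym}(\nuhat^{\otimes K})$ (centredness follows from $\phi\in\Amc^K$) and $R^N\to 0$ in $L^2(\Pmb^N)$ by the degeneracy of $\phihatbd^{sym}$ in each coordinate, a standard consequence of the Hoeffding decomposition. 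Theorem~\ref{thmchap2multitype:Dynkin} applied to $\phihatbd^{sym}$ then gives convergence of the main term to $I_K(\phihatbd^{sym})$, and applying the same theorem jointly to $\phihatbd^{sym}$ (of order $K$) and to the symmetric kernel $f$ (of order $2$) driving $\log\Lambda^N$ produces the joint weak limit $(\xi^N(\phi),\Lambda^N)\Rightarrow(I_K(\phihatbd^{sym}),\exp(J))$ under $\Pmb^N$.

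The main obstacle is the extension to unequal population sizes, since the grouping into i.i.d.\ $K$-vectors used above is no longer available. Here I would appeal to a multi-sample analogue of Theorem~\ref{thmchap2multitype:Dynkin} in which each index $i_\alpha$ in~\eqref{eqchap2multitype:xi N} ranges independently over $\Nalpha$: under $\Pmb^N$ the family $\{V^i\}_{i\in\Nalpha}$ is i.i.d.\ with law $\nu_\alpha$ for each $\alpha\in\Kbd$, and the populations are mutually independent across $\alpha$. A Hoeffding-type orthogonal decomposition of centred symmetric functions of several independent samples, combined with this cross-type independence and the orthogonality of multiple Wiener integrals of different orders, will show that the limits of both $\xi^N(\phi)$ and $\log\Lambda^N$ persist as in the equal-$N_\alpha$ case. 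The delicate remaining points are (i) correctly matching the two summands $\hhat(\omega,\omega')$ and $\hhat(\omega',\omega)$ in~\eqref{eqchap2multitype:f} to the symmetrization of $M^N$ viewed as a degenerate U-statistic, and (ii) verifying uniform integrability of $\Lambda^N$ to pass the change-of-measure formula to the limit.
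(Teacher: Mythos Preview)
Your proposal is correct and follows essentially the same route as the paper: the Girsanov change of measure to the product law $\Pmb^N$, the identification of $J^N(T)$ with a degenerate order-two statistic converging to $J=\tfrac12[I_2(f)-\textnormal{Trace}(AA^*)]$, joint convergence with $\xitil^N(\phi)\Rightarrow I_K(\phihatbd^{sym})$ via the $\Omega_d^K$-lifting and Theorem~\ref{thmchap2multitype:Dynkin}, and passage to the limit by Scheff\'e/uniform integrability. The one minor difference is in the extension to unequal $N_\alpha$: the paper packages this as Lemma~\ref{lemchap2multitype:unbalanced populations}, proved by approximating $\phibd$ and $\psibd_{\alpha\gamma}$ by finite sums of tensor products from a CONS in each $L^2(\nu_\beta)$ and then invoking Lemma~\ref{lemchap2multitype:balanced populations} together with Lemma~\ref{lemchap2multitype:convergence via approximation}, rather than the Hoeffding-type multi-sample decomposition you sketch; both devices achieve the same end.
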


Proof of the theorem is given in Section~\ref{secchap2multitype:proof of general CLT}.

\begin{remark}
	In Theorems~\ref{thmchap2multitype:CLT} and~\ref{thmchap2multitype:general CLT}, the assumption that diffusion coefficients in~\eqref{eqchap2multitype:ZiNt} are identity matrices can be relaxed as follows:
	Replace $W^i_t$ in~\eqref{eqchap2multitype:ZiNt} by $\int_0^t \Sigma_\alpha(s,\ZiNs) \, d\Wtil^i_s + \int_0^t \sigma_\alpha(s,\ZiNs) \, d\Wbar^i_s$, where $\Sigma_\alpha : [0,T] \times \Rmb^d \to \Rmb^{d \times d}$ and $\sigma_\alpha : [0,T] \times \Rmb^d \to \Rmb^{d \times r}$ are bounded Lipschitz (in the second variable, uniformly in the first variable) functions such that $\Sigma_\alpha$ is invertible and $\Sigma_\alpha^{-1}$ is bounded. 
	Here $\{\Wtil^i, i \in \Nbd\}$ and $\{\Wbar^i, i \in \Nbd\}$ are mutually independent $d$ and $r$ dimensional BM respectively defined on $(\Omega, \Fmc, \Pbd, \{\Fmc_t\})$.
	Proof of the central limit theorem under this weaker condition is similar except that the change of measure introduced in Section~\ref{secchap2multitype:proof of CLT} takes a slightly different form.
	Specifically the function $b_{\alpha\gamma,t}$ appearing in~\eqref{eqchap2multitype:JN1} and~\eqref{eqchap2multitype:JN2} (see~\eqref{eqchap2multitype:b alpha gamma t}) is replaced by
	\begin{equation*}
		b_{\alpha\gamma,t}(x,y) = \Sigma_\alpha^{-1}(t,x) \Big( b_{\alpha\gamma}(x,y) - \langle b_{\alpha\gamma}(x,\cdot), \mu^\gamma_t \rangle \Big), \quad (x,y) \in \Rmb^d \times \Rmb^d, \quad \alpha, \gamma \in \Kbd,
	\end{equation*}
	and $\{W^i, i \in \Nbd\}$ is replaced by $\{ \Wtil^i, i \in \Nbd \}$.
	However, to keep the presentation less technical we will assume diffusion coefficients to be identity matrices throughout.
\end{remark}


\section{Weakly interacting particle systems with a common factor} \label{chap3commonfactor:weakly interacting particle systems with a common factor}

In this section we consider a setting where the drift coefficients of the interacting diffusions are suitable functions of not only the state of individual particles but also another stochastic process that represents a common source of random input to particle dynamics.
Additionally, unlike Section~\ref{secchap2multitype:model}, we consider a general nonlinear dependence of particle dynamics on empirical measures of particles of different types.

For fixed $N \ge 1$, consider the system of equations for the $\Rmb^d$-valued continuous stochastic processes $Z^{i,N}$, $i \in \Nbd$, and the $\Rmb^m$-valued continuous stochastic processes $U^N$, given on $(\Omega, \Fmc, \Pbd, \{\Fmc_t\})$.
For $i \in \Nalpha$,
\begin{gather} 
	\ZiNt = Z_0^{i,N} + \int_0^t b_\alpha (\ZiNs,U^N_s,\mubd^N_s) \, ds + W^i_t, \label{eqchap3commonfactor:ZiNt} \\
	U^N_t = U_0 + \int_0^t \bbar(U^N_s,\mubd^N_s) \, ds + \int_0^t \sigmabar(U^N_s,\mubd^N_s) \, d\Wbar_s, \label{eqchap3commonfactor:U N t} \\
	\mubd^N_t = (\mu^{1,N}_t, \dotsc, \mu^{K,N}_t),  \quad \mu^{\gamma,N}_t = \frac{1}{N_\gamma} \sum_{j \in \Ngamma} \delta_{\ZjNt} \label{eqchap3commonfactor:mu N t}.
\end{gather}
Here $(\Omega, \Fmc, \Pbd, \{\Fmc_t\})$, $\{ W^i \}$ are as in Section~\ref{secchap2multitype:model}, $\Wbar$ is an $m$-dimensional $\{\Fmc_t\}$-BM independent of $\{W^i\}$.
We assume that for $\alpha \in \Kbd$, $\{ Z^{i,N}_0 \}_{i \in \Nalpha}$ are i.i.d. with common distribution $\mu^\alpha_0$ and are also mutually independent. 
Moreover, $U_0$ is independent of $\{ Z^{i,N}_0 \}_{i \in \Nbd}$ and has probability distribution $\mubar_0$.
$\{Z^{i,N}_0\}_{i \in \Nbd}$ and $U_0$ are $\Fmc_0$ measurable.

We note that the model studied in Sections~\ref{secchap2multitype:model} and~\ref{secchap2multitype:fluctuations} corresponds to a setting where 
\begin{equation*}
b_\alpha(z,u,\nubd) = \sum_{\gamma=1}^K \langle b_{\alpha\gamma}(z,\cdot), \nu_\gamma \rangle, \quad \nubd = (\nu_1,\dotsc,\nu_K) \in [\Pmc(\Rmb^d)]^K, \quad (z,u) \in \Rmb^d \times \Rmb^m.
\end{equation*}

As in Section~\ref{secchap2multitype:model}, along with above $N$-particle equations, we will also consider a related infinite system of equations for the $\Rmb^d$-valued continuous stochastic processes $X^i$, $i \in \Nmb$, and the $\Rmb^m$-valued continuous stochastic processes $Y$, given on $(\Omega, \Fmc, \Pbd, \{\Fmc_t\})$. 
For $\alpha \in \Kbd$ and $i \in \Nmb_\alpha$,
\begin{gather}
	\Xit = X_0^i + \int_0^t b_\alpha(\Xis,Y_s,\mubd_s) \, ds + W^i_t, \label{eqchap3commonfactor:Xit} \\
	Y_t = Y_0 + \int_0^t \bbar(Y_s,\mubd_s) \, ds + \int_0^t \sigmabar(Y_s,\mubd_s) \, d\Wbar_s, \label{eqchap3commonfactor:Yt} \\
	\mubd_t = (\mu^1_t, \dotsc, \mu^K_t), \quad	\mu^{\gamma}_t = \lim_{N \to \infty} \frac{1}{N_\gamma} \sum_{j \in \Ngamma} \delta_{\Xjt}. \label{eqchap3commonfactor:mu t}
\end{gather}
Here $Y_0 = U_0$ and $\{X^i_0\}_{i \in \Nmb}$ are independent $\Fmc_0$-measurable random variables, with $\Lmc(X^i_0) = \mu^\alpha_0$ for $i \in \Nmb_\alpha$ and $\alpha \in \Kbd$,  and the limit in~\eqref{eqchap3commonfactor:mu t} is in a.s.\ sense.


\subsection{Well-posedness} \label{secchap3commonfactor:well posedness}

We now give conditions on the coefficient functions under which the systems of equations~\eqref{eqchap3commonfactor:ZiNt}-\eqref{eqchap3commonfactor:mu N t} and~\eqref{eqchap3commonfactor:Xit}-\eqref{eqchap3commonfactor:mu t} have unique pathwise solutions.
A pathwise solution of~\eqref{eqchap3commonfactor:Xit}-\eqref{eqchap3commonfactor:mu t} is a collection of continuous processes $(X^i,Y)$, $i \in \Nmb$, with values in $\Rmb^d \times \Rmb^m$ such that: $(a)$ $Y$ is $\{ \Gmc_t \}$-adapted, where $\Gmc_t = \sigma \{ Y_0, \Wbar_s, s \le t \}$; $(b)$ $\Xbd$ is $\{ \Fmc_t \}$-adapted where $\Xbd = (X_i)_{i \in \Nmb}$; $(c)$ stochastic integrals on the right sides of~\eqref{eqchap3commonfactor:Xit}-\eqref{eqchap3commonfactor:Yt} are well defined; (d) Equations~\eqref{eqchap3commonfactor:Xit}-\eqref{eqchap3commonfactor:mu t} hold a.s..
Uniqueness of pathwise solutions says that if $(\Xbd, Y)$ and $(\Xbd', Y')$ are two such solutions with $(\Xbd_0, Y_0) = (\Xbd_0', Y_0')$ then they must be indistinguishable. 
Existence and uniqueness of solutions to~\eqref{eqchap3commonfactor:ZiNt}-\eqref{eqchap3commonfactor:mu N t} are defined in a similar manner. 
In particular, in this case $(a)$ and $(b)$ are replaced by the requirement that $(Z^{i,N}, U^N)_{i \in \Nbd}$ are $\{ \Fmc_t \}$-adapted.

Define the metric $d_{BL}^{(K)}$ on $[\Pmc(\Rmb^d)]^K$ as $d_{BL}^{(K)}(\nubd,\nubd') = \sum_{\alpha=1}^K d_{BL}(\nu_\alpha,\nu_\alpha')$ for $\nubd,\nubd' \in [\Pmc(\Rmb^d)]^K$, $\nubd = (\nu_\alpha)_{\alpha \in \Kbd}$, $\nubd' = (\nu_\alpha')_{\alpha \in \Kbd}$. 
We now introduce conditions on the coefficients that will ensure existence and uniqueness of solutions.

\begin{condition} \label{condchap3commonfactor:cond1}
	There exists $L \in (0,\infty)$ such that
	
	\noi $(a)$ For all $x \in \Rmb^d$, $y \in \Rmb^m$, $\nubd \in [\Pmc(\Rmb^d)]^K$ and $\alpha \in \Kbd$,
	\begin{equation*}
		\max \{ \| b_\alpha(x,y,\nubd) \|, \| \bbar(y,\nubd) \|, \| \sigmabar(y,\nubd) \| \} \le L.
	\end{equation*}
	$(b)$ For all $x, x' \in \Rmb^d$, $y, y' \in \Rmb^m$, $\nubd, \nubd' \in [\Pmc(\Rmb^d)]^K$ and $\alpha \in \Kbd$,
	\begin{gather*}
		\| b_\alpha(x,y,\nubd) - b_\alpha(x',y',\nubd') \| \le L (\| x - x' \| + \| y - y' \| + d_{BL}^{(K)}(\nubd,\nubd')), \\
		\| \bbar(y,\nubd) - \bbar(y',\nubd') \| + \| \sigmabar(y,\nubd) - \sigmabar(y',\nubd') \| \le L (\| y - y' \| + d_{BL}^{(K)}(\nubd,\nubd')).
	\end{gather*}
\end{condition}

Under the above condition we can establish the following well-posedness result.
The proof follows along the lines of Theorem~$2.1$ of~\cite{BudhirajaSaha2014} and is therefore omitted.

\begin{theorem} \label{thmchap3commonfactor:wellposedness}
	Suppose that
	\begin{equation*}
		\sum_{\alpha \in \Kbd}\int \|x\|^2 \mu^\alpha_0(dx) + \int \|y\|^2 \mubar_0(dy) < \infty 
	\end{equation*}
	and Condition~\ref{condchap3commonfactor:cond1} holds.
	Then:
	
	\noi $(a)$ The systems of equations~\eqref{eqchap3commonfactor:Xit}-\eqref{eqchap3commonfactor:mu t} has a unique pathwise solution.
	
	\noi $(b)$ The systems of equations~\eqref{eqchap3commonfactor:ZiNt}-\eqref{eqchap3commonfactor:mu N t} has a unique pathwise solution.
\end{theorem}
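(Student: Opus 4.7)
The plan is to treat part (b) first by reducing to a standard finite-dimensional SDE, and then part (a) via a Picard/fixed-point argument on an appropriate space of random measure-valued processes, following the scheme of Theorem~2.1 of~\cite{BudhirajaSaha2014}. For part (b), note that the map $(z^1,\dotsc,z^N) \mapsto \mubd^N$ with $\mu^{\gamma,N} = \frac{1}{N_\gamma}\sum_{j \in \Ngamma}\delta_{z^j}$ is Lipschitz from $\Rmb^{Nd}$ (with the $\ell^1$-norm) into $([\Pmc(\Rd)]^K, d_{BL}^{(K)})$: indeed $d_{BL}(\delta_x,\delta_{x'}) \le \|x-x'\| \wedge 2$, so $d_{BL}(\mu^{\gamma,N}(\zbd),\mu^{\gamma,N}(\zbd')) \le \frac{1}{N_\gamma}\sum_{j \in \Ngamma}\|z^j - z'^j\|$. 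Combined with Condition~\ref{condchap3commonfactor:cond1}, the coefficients of the $\Rmb^{Nd+m}$-valued SDE for $(Z^{1,N},\dotsc,Z^{N,N}, U^N)$ are bounded and globally Lipschitz in the state, so existence and uniqueness of pathwise solutions follow from classical It\^o theory. The square-integrability of $Z^{i,N}_0$ and $U_0$ yields the usual moment bounds.

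For part (a), introduce the space $\Mmc$ of $\{\Gmc_t\}$-adapted continuous processes $\mubd : [0,T] \to [\Pmc(\Rd)]^K$ with $\sup_{t \le T} \sum_\alpha \Emb \int \|x\|^2 \, \mu^\alpha_t(dx) < \infty$, and define a map $\Phi$ on $\Mmc$ as follows. Given $\mubd \in \Mmc$, use Condition~\ref{condchap3commonfactor:cond1} together with standard SDE theory to obtain the unique $\{\Gmc_t\}$-adapted strong solution $Y^{\mubd}$ of~\eqref{eqchap3commonfactor:Yt} with $Y^{\mubd}_0 = U_0$ (the coefficients are bounded and Lipschitz in $y$ with Lipschitz constant uniform in the random input $\mubd_t$). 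Next, for each $i \in \Nmb_\alpha$, solve~\eqref{eqchap3commonfactor:Xit} driven by $W^i$ with input $(Y^{\mubd}, \mubd)$ to obtain a unique $\{\Fmc_t\}$-adapted solution $X^{i,\mubd}$. Conditionally on $\Gmc_T$, the processes $\{X^{i,\mubd}\}_{i \in \Nmb_\alpha}$ are i.i.d.\ with some common conditional law, which we take as the $\alpha$-component $\Phi(\mubd)^\alpha_t \doteq \Lmc(X^{i,\mubd}_t \mid \Gmc_t)$ (a $\{\Gmc_t\}$-adapted random measure). A conditional strong law of large numbers then shows that any pathwise solution satisfying~\eqref{eqchap3commonfactor:mu t} must be a fixed point of $\Phi$, and conversely any fixed point of $\Phi$ produces one.

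To solve $\mubd = \Phi(\mubd)$, I would iterate $\Phi$. The key contraction estimate comes from
\begin{equation*}
	d_{BL}(\Phi(\mubd)^\alpha_t, \Phi(\mubd')^\alpha_t) \le \Emb\bigl[\|X^{i,\mubd}_t - X^{i,\mubd'}_t\| \wedge 2 \,\big|\, \Gmc_t\bigr],
\end{equation*}
combined with Condition~\ref{condchap3commonfactor:cond1}, BDG, and Gronwall's inequality to bound $\Emb\sup_{s \le t}\|X^{i,\mubd}_s - X^{i,\mubd'}_s\|^2 + \Emb\sup_{s \le t}\|Y^{\mubd}_s - Y^{\mubd'}_s\|^2$ by a constant multiple of $\int_0^t \Emb [d_{BL}^{(K)}(\mubd_s, \mubd'_s)^2] \, ds$. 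Iterating yields a contraction on a small interval $[0,\tau]$ in the metric $\rho_\tau(\mubd,\mubd') = \sup_{t \le \tau}\sum_\alpha \Emb[d_{BL}(\mu^\alpha_t,\mu'^\alpha_t)^2]^{1/2}$; concatenation extends the solution to $[0,T]$, and the uniqueness claim for pathwise solutions follows since any solution is a fixed point of $\Phi$. The main technical obstacle is the rigorous handling of the conditional laws, namely verifying that $\Phi(\mubd)$ admits a version that is a genuine $\{\Gmc_t\}$-adapted continuous $[\Pmc(\Rd)]^K$-valued process; this requires a measurable selection of regular conditional distributions and a pathwise continuity estimate for $t \mapsto X^{i,\mubd}_t$ to pass to continuity of $t \mapsto \Phi(\mubd)^\alpha_t$ under $d_{BL}$. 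Once these technicalities are in place, the remainder parallels the argument of Theorem~2.1 of~\cite{BudhirajaSaha2014} and is therefore omitted.
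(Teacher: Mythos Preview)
Your proposal is correct and aligns with the paper's treatment: the paper itself gives no proof, stating only that ``the proof follows along the lines of Theorem~2.1 of~\cite{BudhirajaSaha2014} and is therefore omitted,'' and your sketch is precisely a fleshing-out of that scheme (finite-dimensional reduction for part~(b), Picard iteration on a space of $\{\Gmc_t\}$-adapted measure-valued processes for part~(a)). The technical caveats you flag about regular conditional distributions and continuity of $t \mapsto \Phi(\mubd)^\alpha_t$ are exactly the points that need care in adapting~\cite{BudhirajaSaha2014} to the multi-type setting, but they introduce no new difficulty.
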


\begin{remark} \label{rmkchap3commonfactor:rmk1}
	(i) We note that the unique pathwise solvability in Theorem~\ref{thmchap3commonfactor:wellposedness}$(a)$ implies that there is a measurable map $\Umc : \Rmb^m \times \Cmc_m \to \Cmc_m$ such that the process $Y$ that solves~\eqref{eqchap3commonfactor:Xit}-\eqref{eqchap3commonfactor:mu t} is given as $Y = \Umc(Y_0, \Wbar)$.

	\noi (ii) Recall that $\Gmc_s = \sigma \{ Y_0, \Wbar_r, r \le s \}, s \in [0,T]$. Let $\Gmc = \Gmc_T$. Then similar to Theorem~$2.3$ of~\cite{KurtzXiong1999} (see also Remark~$2.1$ in~\cite{BudhirajaSaha2014}) we can show that if $(\{X^i\},Y)$ is a solution of~\eqref{eqchap3commonfactor:Xit}-\eqref{eqchap3commonfactor:mu t} then
	\begin{equation*}
		\mu^\alpha_t = \Lmc(\Xit | \Gmc) = \Lmc(\Xit | \Gmc_t), t \in [0,T], i \in \Nalpha, \alpha \in \Kbd.
	\end{equation*}
	In particular, there are measurable maps $\Pi^\alpha : \Rmb^m \times \Cmc_m \to \Pmc(\Cmc_d)$ such that $\Pi^\alpha(Y_0,\Wbar) = \mu^\alpha$ a.s., where $\mu^\alpha = \Lmc(X^i | \Gmc)$ for $\alpha \in \Kbd$.
	Clearly $\mu^\alpha_t$ is identical to the marginal of $\mu^\alpha$ at time $t$.
\end{remark}


\subsection{Central limit theorem} \label{secchap3commonfactor:CLT}

The conditions and proof for the central limit theorem when there is a common factor require some notations which we prefer to introduce in later sections.
In this section we will present the basic limit result while referring the reader to Section~\ref{secchap3commonfactor:preparations} for precise conditions and definitions.

Recall $\Omega_d = \Cmc_d \times \Cmc_d$ and let $\Omega_m = \Cmc_m \times \Cmc_m$.
Define for $N \in \Nmb$ the probability measure $\Pmbbar^N$ on $(\Omegabar^N,\Bmc(\Omegabar^N))$, where $\Omegabar^N = \Omega_m \times \Omega_d^N$, as
\begin{equation} \label{eqchap3commonfactor:P N}
	\Pmbbar^{N} = \Lmc \big( (\Wbar, Y), (W^1, X^1), (W^2, X^2), \dotsc,(W^N, X^N) \big),
\end{equation}
where processes on the right side are as introduced below~\eqref{eqchap3commonfactor:mu N t}. 
Note that $\Pmbbar^N$ can be disintegrated as
\begin{equation*}
	\Pmbbar^N(d\omegabar \, d\omega_1 \dotsm d\omega_N) = \rho_{\pbd(1)}(\omegabar,d\omega_1) \rho_{\pbd(2)}(\omegabar,d\omega_2) \dotsm \rho_{\pbd(N)}(\omegabar,d\omega_N) \Pbar(d\omegabar),
\end{equation*}
where $\Pbar = \Lmc(\Wbar, Y)$ and for $\alpha \in \Kbd$, $\rho_\alpha(\omegabar,d\omega) = \Pi^{\alpha}((\omegabar^2_0,\omegabar^1))(d\omega)$ for $\Pbar$ a.e.\ $\omegabar = (\omegabar^1,\omegabar^2) \in \Omega_m$.

We can now present the second main result of this work.
Recall $X_*$ introduced in~\eqref{eqchap2multitype:V star}.
For $\alpha \in \Kbd$, denote by $\Amcbar_\alpha$ the collection of all measurable maps $\phi : \Cmc_d \to \Rmb$ such that $\phi(X_*) \in L^2(\Omega_d, \rho_\alpha(\omegabar,\cdot))$ for $\Pbar$ a.e.\ $\omegabar \in \Omega_m$.
For $\phi \in \Amcbar_\alpha$ and $\omegabar \in \Omega_m$, let
\begin{gather}
	m_{\phi}^\alpha(\omegabar) = \int_{\Omega_d} \phi(X_*(\omega)) \rho_\alpha(\omegabar,d\omega), \label{eqchap3commonfactor:m alpha phi alpha} \\ 
	\Vmc^N_\alpha(\phi) = \sqrt{N_\alpha} \big( \frac{1}{N_\alpha} \sum_{i \in \Nalpha} \phi(Z^{i,N}) - m_{\phi}^\alpha(V^0) \big), \label{eqchap3commonfactor:V alpha phi alpha}
\end{gather}
where $V^0 = (\Wbar, Y)$.
For $\phi_\alpha \in \Amcbar_\alpha$, $\alpha \in \Kbd$, let $\pi^N(\phi_1,\dotsc,\phi_K) \in \Pmc(\Rmb^K)$ be the probability distribution of $(\Vmc^N_1(\phi_1), \dotsc, \Vmc^N_K(\phi_K))$.
For $\omegabar \in \Omega_m$, let $\pi_\omegabar(\phi_1,\dotsc,\phi_K)$ be the $K$-dimensional multivariate normal distribution with mean $0$ and covariance matrix $\Sigma_\omegabar = (\Sigma^{\alpha\gamma}_\omegabar)_{\alpha,\gamma \in \Kbd}$ introduced in~\eqref{eqchap3commonfactor:Sigma omegabar}.
Let $\pi(\phi_1,\dotsc,\phi_K) \in \Pmc(\Rmb^K)$ be defined as
\begin{equation} \label{eqchap3commonfactor:pi phi 1 phi K}
	\pi(\phi_1,\dotsc,\phi_K) = \int_{\Omega_m} \pi_\omegabar(\phi_1,\dotsc,\phi_K) \Pbar(d\omegabar).
\end{equation}
The following is the second main result of this work. The proof is given in Section~\ref{secchap3commonfactor:proofs}.

\begin{theorem} \label{thmchap3commonfactor:CLT}
	Suppose Conditions~\ref{condchap3commonfactor:cond1},~\ref{condchap3commonfactor:cond2} and~\ref{condchap3commonfactor:cond3} hold. Then for all $\alpha \in \Kbd$ and $\phi_\alpha \in \Amcbar_\alpha$, $\pi^N(\phi_1,\dotsc,\phi_K)$ converges weakly to $\pi(\phi_1,\dotsc,\phi_K)$ as $N \to \infty$, where $\pi(\phi_1,\dotsc,\phi_K) \in \Pmc(\Rmb^K)$ is as in~\eqref{eqchap3commonfactor:pi phi 1 phi K}.
\end{theorem}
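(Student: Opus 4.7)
The plan is to adapt the change-of-measure strategy of the proof of Theorem~\ref{thmchap2multitype:CLT} to the common-factor setting, using $(Y,\mubd)$ from the limit system~\eqref{eqchap3commonfactor:Xit}--\eqref{eqchap3commonfactor:mu t} as a reference. First I would introduce, on the same filtered probability space, auxiliary processes $\{\Ztil^{i,N}\}$ defined by the analog of~\eqref{eqchap3commonfactor:ZiNt} but with the pair $(U^N_s,\mubd^N_s)$ replaced by the $\Gmc$-measurable pair $(Y_s,\mubd_s)$; by Remark~\ref{rmkchap3commonfactor:rmk1}(ii), the processes $\{\Ztil^{i,N}:i\in\Nalpha\}$ are, conditionally on $\Gmc$, i.i.d.\ with random law $\mu^\alpha$, and distinct-type collections are conditionally independent. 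Set
\begin{equation*}
\Vmctil^N_\alpha(\phi_\alpha)=\sqrt{N_\alpha}\Bigl(\frac{1}{N_\alpha}\sum_{i\in\Nalpha}\phi_\alpha(\Ztil^{i,N})-m_{\phi_\alpha}^\alpha(V^0)\Bigr).
\end{equation*}
I would then define a Girsanov change of measure $\Qmb^N\sim\Pmb$ acting only on $\{W^i\}$, which leaves the joint law of $(\Wbar,Y)$ (and hence each random centering $m_{\phi_\alpha}^\alpha(V^0)$) invariant and which converts, for each $i\in\Nalpha$, the drift $b_\alpha(\cdot,U^N_s,\mubd^N_s)$ of $Z^{i,N}$ into $b_\alpha(\cdot,Y_s,\mubd_s)$. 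Condition~\ref{condchap3commonfactor:cond1}(a) together with Novikov's criterion guarantees the existence and $L^2$-boundedness of the density $L_N=d\Pmb/d\Qmb^N$.

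Under $\Qmb^N$, the collection $(Z^{1,N},\dotsc,Z^{N,N})$ has the same joint law as $(\Ztil^{1,N},\dotsc,\Ztil^{N,N})$ has under $\Pmb$, so for bounded continuous $g:\Rmb^K\to\Rmb$,
\begin{equation*}
\Emb_\Pmb\bigl[g(\Vmc^N_1(\phi_1),\dotsc,\Vmc^N_K(\phi_K))\bigr]=\Emb_\Pmb\bigl[g(\Vmctil^N_1(\phi_1),\dotsc,\Vmctil^N_K(\phi_K))\,\tilde L_N\bigr],
\end{equation*}
where $\tilde L_N$ is obtained from $L_N$ by substituting $\Ztil^{i,N}$ for $Z^{i,N}$ and the corresponding auxiliary empirical-measure and common-factor quantities. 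An It\^o expansion of $\log\tilde L_N$ decomposes it into pairwise $(\alpha,\gamma)\in\Kbd^2$ interaction terms whose conditional-on-$\Gmc$ structure mirrors the key displays~\eqref{eqchap2multitype:JN1}--\eqref{eqchap2multitype:JN2} of the no-common-factor proof, but with $\nu_\alpha,\nu_\gamma$ replaced by the random conditional laws $\rho_\alpha(V^0,\cdot),\rho_\gamma(V^0,\cdot)$ and the kernel $b_{\alpha\gamma,t}$ generalized to an $\omegabar$-dependent bilinear form built from the Fr\'echet derivative of $\nubd\mapsto b_\alpha(\cdot,Y_t,\nubd)$ at $\mubd_t$ (the relevant integrability being encoded in Conditions~\ref{condchap3commonfactor:cond2} and~\ref{condchap3commonfactor:cond3} from Section~\ref{secchap3commonfactor:preparations}). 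Working conditionally on $\Gmc$ and invoking Theorem~\ref{thmchap2multitype:Dynkin} together with the machinery of the proof of Theorem~\ref{thmchap2multitype:general CLT}, I would conclude that, for $\Pbar$-a.e.\ $\omegabar$, the conditional distribution of $(\Vmctil^N_1(\phi_1),\dotsc,\Vmctil^N_K(\phi_K),\tilde L_N)$ converges jointly, with the random reweighting by the limit of $\tilde L_N$ transforming the conditional diagonal Gaussian limit of the $\Vmctil^N_\alpha$'s into a mean-zero Gaussian with covariance $\Sigma_\omegabar$ from~\eqref{eqchap3commonfactor:Sigma omegabar}. Integrating against $\Pbar$ yields $\pi(\phi_1,\dotsc,\phi_K)$.

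The hard part is twofold. First, the limit objects (multiple Wiener integrals and Gaussian fields) arising in the conditional analysis above are a priori defined only pointwise for $\Pbar$-a.e.\ $\omegabar$ with no automatic joint measurability; synthesizing them into a genuine random variable on the original probability space requires a measurable construction of multiple Wiener integrals with an additional random parameter, which is exactly the task of Section~\ref{secchap3commonfactor:combining contributions from JN1 and JN2}. Second, the substitution of $\Ztil^{i,N}$ for $Z^{i,N}$ (and of $Y,\mubd$ for $U^N,\mubd^N$) in the Radon--Nikodym density produces discrepancy terms involving Brownian integrals of the drift error $b_\alpha(\cdot,U^N,\mubd^N)-b_\alpha(\cdot,Y,\mubd)$, which by conditional propagation of chaos is of order $N^{-1/2}$ but appears $\sqrt{N_\alpha}$-amplified in the Laplace transform; showing that the net perturbation is $o_P(1)$ requires a delicate conditional Gronwall argument exploiting the uniform Lipschitz bound in Condition~\ref{condchap3commonfactor:cond1}(b) and the $L^2$-control of $L_N$, which is the content of Section~\ref{secchap3commonfactor:study YN - Y}.
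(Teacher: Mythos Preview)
Your overall architecture matches the paper's: a Girsanov change acting only on the $W^i$'s (leaving $(\Wbar,Y)$ and hence the random centering invariant), conditional-on-$\Gmc$ application of the Dynkin--Mandelbaum machinery, and a final averaging over $\Pbar$. The auxiliary processes you call $\Ztil^{i,N}$ are exactly the paper's canonical $X^i$, and your $\tilde L_N$ is the paper's $\exp(J^N(T))$.

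However, there is a genuine gap in how you identify the limiting kernel and in what you expect Section~\ref{secchap3commonfactor:study YN - Y} to deliver. You describe the kernel as ``built from the Fr\'echet derivative of $\nubd\mapsto b_\alpha(\cdot,Y_t,\nubd)$,'' i.e.\ only the term $b^c_{\alpha\gamma,(2)}$ from Condition~\ref{condchap3commonfactor:cond3}, and you then assert that the remaining contribution from the common-factor discrepancy is a ``net perturbation $o_P(1)$.'' This is not correct. Under Condition~\ref{condchap3commonfactor:cond3} the drift error decomposes as
\[
b_\alpha(X^i_t,\Rbd^N_t)-b_\alpha(X^i_t,\Rbd_t)=b_{\alpha,(1)}(X^i_t,\Rbd_t)(Y^N_t-Y_t)+\sum_\gamma\langle b^c_{\alpha\gamma,(2)}(X^i_t,\Rbd_t,\cdot),\mu^{\gamma,N}_t-\mu^\gamma_t\rangle+\theta_{b_\alpha},
\]
and by Lemma~\ref{lemchap3commonfactor:expectation of difference of Y} the first term is of order $N^{-1/2}$. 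Summed over $i\in\Nalpha$ inside a stochastic integral, it is amplified by $\sqrt{N_\alpha}$ and contributes at order one, \emph{not} $o_P(1)$. What Section~\ref{secchap3commonfactor:study YN - Y} actually does (Lemma~\ref{lemchap3commonfactor:difference of Y}) is solve the linearized SDE for $Y^N-Y$ to obtain the representation
\[
Y^N_t-Y_t=\sum_{\gamma=1}^K\frac{1}{N_\gamma}\sum_{j\in\Ngamma}\sbd_{\gamma,t}(X^j_{[0,t]},\Dbd_{[0,t]})+\Tmc^N_1(t),
\]
with $\EmbPbar\|\Tmc^N_1(t)\|^2\le a_0/N^2$. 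The structured sum feeds back into the kernel: the correct kernel is $\fbd_{\alpha\gamma,t}$ of~\eqref{eqchap3commonfactor:f alpha gamma t}, which is $b^c_{\alpha\gamma,(2)}+b_{\alpha,(1)}\,\sbd_{\gamma,t}$, and this second piece enters the operator $A_\omegabar$ and hence the limit covariance $\Sigma_\omegabar$ in~\eqref{eqchap3commonfactor:Sigma omegabar}. Only the genuine remainder $\Tmc^N_1$ (and the second-order $\theta_{b_\alpha}$) are $o_P(1)$ after amplification. With your kernel you would obtain the wrong $\Sigma_\omegabar$; the role of Section~\ref{secchap3commonfactor:study YN - Y} is not to kill a perturbation but to extract the leading-order structure of $Y^N-Y$ that becomes part of the limit.
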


\begin{remark} \label{rmkchap3commonfactor:rmk2}
	(i) We note that unlike in Theorem~\ref{thmchap2multitype:CLT}, there is a random centering term $m_{\phi_\alpha}^\alpha(V^0)$ in the limit theorem (cf.~\eqref{eqchap3commonfactor:V alpha phi alpha}).
	Also, as seen for the definition of $\pi$ in~\eqref{eqchap3commonfactor:pi phi 1 phi K}, the asymptotic distribution of $\Vmc^N_\alpha(\phi_\alpha)$ is not Gaussian but rather a mixture of Gaussian distributions.
	
	\noi (ii) One can also establish a result similar to Theorem~\ref{thmchap2multitype:general CLT} for the setting with a common factor.
	We leave the formulation and proof of such a result to the reader.
\end{remark}

{\bf Organization of the paper:} Rest of the paper is organized as follows.
In Section~\ref{secchap2multitype:proofs} we prove Lemma~\ref{lemchap2multitype:trace}, Theorems~\ref{thmchap2multitype:CLT} and~\ref{thmchap2multitype:general CLT}.
These results concern the setting where there is no common factor present.
Proof of Lemma~\ref{lemchap2multitype:trace} is in Section~\ref{secchap2multitype:proof of lemma of trace} while Theorems~\ref{thmchap2multitype:CLT} and~\ref{thmchap2multitype:general CLT} are proved in Sections~\ref{secchap2multitype:proof of CLT} and~\ref{secchap2multitype:proof of general CLT} respectively.
Sections~\ref{secchap3commonfactor:preparations} and~\ref{secchap3commonfactor:proofs} treat the setting of a common factor described in Section~\ref{chap3commonfactor:weakly interacting particle systems with a common factor}.
In Section~\ref{secchap3commonfactor:assumptions for CLT} we will introduce the main assumptions, Conditions~\ref{condchap3commonfactor:cond2} and~\ref{condchap3commonfactor:cond3}, needed for Theorem~\ref{thmchap3commonfactor:CLT} and Section~\ref{secchap3commonfactor:canonical processes} -~\ref{secchap3commonfactor:some random integral operators} will introduce some functions and (random) integral operators that will be needed in the proof of the theorem.
Proof of Theorem~\ref{thmchap3commonfactor:CLT} and also the proof of Lemma~\ref{lemchap3commonfactor:Trace} will be presented in Section~\ref{secchap3commonfactor:proofs}.


\section{Proofs for the no-common-factor case} \label{secchap2multitype:proofs}

For $N \in \Nmb$, let $\Omega_d^N, \Pmb^N, V_*, V^i, i \in \Nbd, \nu_\alpha, \alpha \in \Kbd, \nuhat$ be as in Section~\ref{secchap2multitype:canonical processes}.


\subsection{Proof of Lemma~\ref{lemchap2multitype:trace}} \label{secchap2multitype:proof of lemma of trace}

Recall the definition of $b_{\alpha\gamma,t}$, $\hhat_{\alpha\gamma}$ and $\hhat$ in~\eqref{eqchap2multitype:b alpha gamma t} and~\eqref{eqchap2multitype:hhat}.
Define for $\alpha,\beta,\gamma \in \Kbd$, the functions $b_{\alpha\beta\gamma,t} : \Rmb^d \times \Rmb^d \to \Rmb$ and $l_{\alpha,\beta\gamma}$ from $\Omega_d \times \Omega_d$ to $\Rmb$ ($\nu_\beta \otimes \nu_\gamma$ a.s.) as follows:
\begin{gather}
	b_{\alpha\beta\gamma,t} (x,y) = \int_{\Rmb^d} b_{\alpha\beta,t}(z,x) \cdot b_{\alpha\gamma,t}(z,y) \, \mu^\alpha_t(dz), \quad (x,y) \in \Rmb^d \times \Rmb^d, \label{eqchap2multitype:b alpha beta gamma t} \\
	l_{\alpha,\beta\gamma}(\omega,\omega') = \frac{\lambda_\alpha}{\sqrt{\lambda_\beta \lambda_\gamma}} \int_0^T b_{\alpha\beta\gamma,t}(X_{*,t}(\omega), X_{*,t}(\omega')) \, dt, \quad (\omega,\omega') \in \Omega_d \times \Omega_d. \label{eqchap2multitype:l alpha beta gamma}
\end{gather}
Also define lifted function $\lhat_{\alpha,\beta\gamma}$ from $\Omega_d^K \times \Omega_d^K$ to $\Rmb$ ($\nuhat \otimes \nuhat$ a.s.) as
\begin{equation}
	\lhat_{\alpha,\beta\gamma}(\omega,\omega') = l_{\alpha,\beta\gamma}(\omega_\beta,\omega_\gamma'), \quad \omega = (\omega_1,\dotsc,\omega_K) \in \Omega_d^K, \omega' = (\omega_1',\dotsc,\omega_K') \in \Omega_d^K. \label{eqchap2multitype:lhat alpha beta gamma}
\end{equation}
It is easily seen that for $\alpha, \alpha', \beta, \gamma \in \Kbd$,
\begin{equation}
	\int_{\Omega_d^K} \hhat_{\alpha\beta}(\omega'',\omega) \hhat_{\alpha'\gamma}(\omega'',\omega') \, \nuhat(d\omega'') = \one_{\{ \alpha = \alpha' \}} \lhat_{\alpha,\beta\gamma}(\omega,\omega') . \label{eqchap2multitype:relation between lhat alpha beta gamma and hhat}
\end{equation}
Recall the definition of the integral operator $A_{\alpha\gamma}$ for $\alpha, \gamma \in \Kbd$ in~\eqref{eqchap2multitype:A}, and note that its adjoint $A_{\alpha\gamma}^* : L^2(\Omega_d^K, \nuhat) \to L^2(\Omega_d^K, \nuhat)$ is given as follows:
For $f \in L^2(\Omega_d^K, \nuhat)$ and $\omega \in \Omega_d^K$,
\begin{equation*}
	A_{\alpha\gamma}^* f(\omega) = \int_{\Omega_d^K} \hhat_{\alpha\gamma}(\omega,\omega') f(\omega') \, \nuhat(d\omega').
\end{equation*}
Then for $\alpha, \alpha', \beta, \gamma \in \Kbd$, the operator $A_{\alpha\beta} A_{\alpha'\gamma}^* : L^2(\Omega_d^K, \nuhat) \to L^2(\Omega_d^K, \nuhat)$ is given as follows:
For $f \in L^2(\Omega_d^K, \nuhat)$ and $\omega \in \Omega_d^K$,
\begin{align*}
	A_{\alpha\beta} A_{\alpha'\gamma}^* f(\omega) 
	& = \int_{\Omega_d^K} \Big( \int_{\Omega_d^K} \hhat_{\alpha\beta}(\omega',\omega) \hhat_{\alpha'\gamma}(\omega',\omega'') \, \nuhat(d\omega') \Big) f(\omega'') \, \nuhat(d\omega'') \\
	& = \one_{\{\alpha = \alpha'\}} \int_{\Omega_d^K} \lhat_{\alpha,\beta\gamma}(\omega,\omega'') f(\omega'') \, \nuhat(d\omega''),
\end{align*}
where the last equality follows from~\eqref{eqchap2multitype:relation between lhat alpha beta gamma and hhat}.
In particular, $A_{\alpha\beta} A_{\alpha'\gamma}^* = 0$ if $\alpha \ne \alpha'$. 
Moreover, for $\alpha, \gamma, \gamma' \in \Kbd$,
\begin{align*}
	& \textnormal{Trace}(A_{\alpha \gamma} A_{\alpha \gamma'}^*) \\
	& \quad = \int_{\Omega_d^K \times \Omega_d^K} \hhat_{\alpha\gamma'}(\omega,\omega') \hhat_{\alpha\gamma}(\omega,\omega') \, \nuhat(d\omega) \,  \nuhat(d\omega') \\
	& \quad = \frac{\lambda_\alpha}{\sqrt{\lambda_{\gamma'} \lambda_\gamma}} \int_{\Omega_d^K \times \Omega_d^K} \int_0^T b_{\alpha\gamma',t}(X_{*,t}(\omega_\alpha),X_{*,t}(\omega_{\gamma'}')) \cdot b_{\alpha\gamma,t}(X_{*,t}(\omega_\alpha),X_{*,t}(\omega_\gamma')) \, dt \, \nuhat(d\omega) \,  \nuhat(d\omega') \\
	& \quad = \one_{\{\gamma' = \gamma\}} \frac{\lambda_\alpha}{\lambda_\gamma} \int_0^T \int_{\Omega_d \times \Omega_d} \| b_{\alpha\gamma,t}(X_{*,t}(\omega),X_{*,t}(\omega')) \|^2 \, \nu_\alpha(d\omega) \nu_\gamma(d\omega') \, dt.
\end{align*}
In particular, Trace$(A_{\alpha \gamma} A_{\alpha \gamma'}^*) = 0$ if $\gamma \ne \gamma'$.
Hence we have
\begin{align*}
	\textnormal{Trace}(AA^*) & = \textnormal{Trace} \Big( (\sum_{\alpha=1}^K \sum_{\gamma=1}^K A_{\alpha\gamma})(\sum_{\alpha=1}^K \sum_{\gamma=1}^K A_{\alpha\gamma})^* \Big) = \sum_{\alpha=1}^K \sum_{\gamma=1}^K \textnormal{Trace}(A_{\alpha\gamma} A_{\alpha\gamma}^*) \\
	& = \sum_{\alpha,\gamma=1}^K \frac{\lambda_\alpha}{\lambda_\gamma} \int_0^T \int_{\Omega_d \times \Omega_d} \| b_{\alpha\gamma,t}(X_{*,t}(\omega),X_{*,t}(\omega')) \|^2 \, \nu_\alpha(d\omega) \nu_\gamma(d\omega') \, dt,
\end{align*}
which proves part $(a)$ of Lemma~\ref{lemchap2multitype:trace}. 
Noting that
\begin{equation*}
	\textnormal{Trace}(A^n) = \int_{\Omega_d^K \times \Omega_d^K \times \dotsb \times \Omega_d^K} \hhat(\omega_1,\omega_2) \hhat(\omega_2,\omega_3) \dotsm \hhat(\omega_n,\omega_1) \nuhat(d\omega_1) \nuhat(d\omega_2) \dotsm \nuhat(d\omega_n),
\end{equation*}
part $(b)$ follows from the proof of Lemma~$2.7$ of~\cite{ShigaTanaka1985}. 
Part $(c)$ is now immediate from Lemma~$1.3$ of~\cite{ShigaTanaka1985} (cf.\ Lemma~\ref{lemappendixchap2multitype:Shiga and Tanaka} in Appendix~\ref{Appendixchap2multitype:Restating}). \qed


\subsection{Proof of Theorem~\ref{thmchap2multitype:CLT}} \label{secchap2multitype:proof of CLT}

Recall the canonical processes $X^i, W^i, V^i$ and probability measure $\Pmb^N$ introduced in Section~\ref{secchap2multitype:canonical processes}.
For $t \in [0,T]$, define
\begin{equation*}
	J^N(t) = J^{N,1}(t) - \frac{1}{2} J^{N,2}(t),
\end{equation*}
where
\begin{equation} \label{eqchap2multitype:JN1}
	J^{N,1}(t) = \sum_{\alpha=1}^{K} \sum_{i \in \Nalpha} \int_0^t \sum_{\gamma=1}^{K} \frac{1}{N_\gamma} \sum_{j \in \Ngamma} b_{\alpha\gamma,s}(\Xis,\Xjs) \cdot d \Wis
\end{equation}
and
\begin{equation} \label{eqchap2multitype:JN2}
	J^{N,2}(t) = \sum_{\alpha=1}^{K} \sum_{i \in \Nalpha} \int_0^t \Big\| \sum_{\gamma=1}^{K} \frac{1}{N_\gamma} \sum_{j \in \Ngamma} b_{\alpha\gamma,s}(\Xis,\Xjs) \Big\|^2 ds.
\end{equation}
Let $\bar{\Fmc}^N_t = \sigma\{V^i(s), 0 \le s \le t, i \in \Nbd \}$.
Note that $\{\exp\big(J^N(t)\big)\}$ is a $\{ \bar{\Fmc}^N_t \}$-martingale under $\Pmb^N$.
Define a new probability measure $\Qmb^N$ on $\Omega_d^N$ by 
\begin{equation*}
	\frac{d\Qmb^N}{d\Pmb^N} = \exp\big(J^N(T)\big).
\end{equation*}

By Girsanov's theorem, $(X^1, \dotsc, X^N)$ has the same probability distribution under $\Qmb^N$ as $(Z^{1,N}, \dotsc, Z^{N,N})$ under $\Pbd$.
For $\phi \in \Amc_\alpha$, $\alpha \in \Kbd$, let
\begin{equation} \label{eqchap2multitype:xi N til}
	\xitil^N_\alpha (\phi) = \frac{1}{\sqrt{N_\alpha}} \sum_{i \in \Nalpha} \phi (X^i).
\end{equation}
Thus in order to prove the theorem it suffices to show that for any $\phi^{(\alpha)} \in \Amc_{\alpha}, \alpha \in \Kbd$,
\begin{equation*}
	\lim_{N \to \infty} \Emb_{\Qmb^N} \exp \Big( i \sum_{\alpha \in \Kbd} \xitil^N_{\alpha}(\phi^{(\alpha)}) \Big) = \exp \Big( -\frac{1}{2} \Big\| (I-A)^{-1} \sum_{\alpha \in \Kbd} \phihatbd_{\alpha}^{(\alpha)} \Big\|^2_{L^2(\Omega_d^K,\nuhat)} \Big),
\end{equation*}
where as in Section~\ref{secchap2multitype:CLT}, $\phibd^{(\alpha)} = \phi^{(\alpha)}(X_*)$, and $\phihatbd^{(\alpha)}_\alpha$ is defined by~\eqref{eqchap2multitype:phihat alpha}, replacing $\phibd$ with $\phibd^{(\alpha)}$.
This is equivalent to showing
\begin{align} \label{eqchap2multitype:convergence of characteristic function}
	\begin{split}
		& \lim_{N \to \infty} \Emb_{\Pmb^N} \exp \Big( i \sum_{\alpha \in \Kbd} \xitil^N_{\alpha}(\phi^{(\alpha)}) + J^{N,1}(T) - \frac{1}{2} J^{N,2}(T) \Big) \\
		& \hspace{50 mm} = \exp \Big( -\frac{1}{2} \Big\| (I-A)^{-1} \sum_{\alpha \in \Kbd} \phihatbd_{\alpha}^{(\alpha)} \Big\|^2_{L^2(\Omega_d^K,\nuhat)} \Big).
	\end{split}
\end{align}
In order to prove~\eqref{eqchap2multitype:convergence of characteristic function}, we will need to study the asymptotics of $J^{N,1}$ and $J^{N,2}$ as $N \to \infty$. 
The proof of~\eqref{eqchap2multitype:convergence of characteristic function} is completed in Section~\ref{secchap2multitype:completing the proof of CLT}.
We begin by studying a generalization of Theorem~\ref{thmchap2multitype:Dynkin} to the case of $K$ populations.


\subsubsection{Asymptotics of statistics of multi-type populations of independent particles} \label{secchap2multitype:extending Dynkin-Mandelbaum}

Throughout this subsection, let $\{\Hbd^i = (H_1^i, H_2^i, \dots, H_K^i)\}_{i \ge 1}$ be a sequence of i.i.d. $\Omega_d^K$-valued random variables with law $\nuhat = \nu_1 \otimes \dotsb \otimes \nu_K$.
We introduce the following notion of {\em lifted functions} and {\em lifted symmetric functions}.
Given $\alpha, \gamma \in \Kbd$ and $\psibd_{\alpha\gamma} \in L^2(\Omega_d \times \Omega_d, \nu_\alpha \otimes \nu_\gamma)$, define $\psihatbd_{\alpha\gamma}$ and $\psihatbd^{sym}_{\alpha\gamma}$ in $L^2(\Omega_d^K \times \Omega_d^K, \nuhat \otimes \nuhat)$ as follows: 
For $\omega = (\omega_1,\dotsc,\omega_K)$ and $\omega' = (\omega'_1,\dotsc,\omega'_K)$ in $\Omega_d^K$,
\begin{equation}
	\psihatbd_{\alpha\gamma}(\omega,\omega') = \psibd_{\alpha\gamma}(\omega_\alpha,\omega'_\gamma), \quad \psihatbd^{sym}_{\alpha\gamma}(\omega,\omega') = \frac{1}{2} \Big( \psihatbd_{\alpha\gamma}(\omega,\omega') + \psihatbd_{\alpha\gamma}(\omega',\omega) \Big). \label{eqchap2multitype:psi hat sym}
\end{equation}
Recall $\phihatbd^{sym} \in L^2((\Omega_d^K)^K, \nuhat^{\otimes K})$ defined in~\eqref{eqchap2multitype:phi hat sym} for $\phibd \in L^2(\Omega_d^K, \nuhat)$.

Recall the definition of $L^2_c(\Omega_d, \nu_\alpha)$ and $L^2_c(\Omega_d^K, \nuhat)$ in Section~\ref{secchap2multitype:CLT} and~\ref{secchap2multitype:generalization of CLT}, respectively.
Also, for $\alpha, \gamma \in \Kbd$, denote by $L^2_c(\Omega_d \times \Omega_d, \nu_\alpha \otimes \nu_\gamma)$ the subspace of $\psibd \in L^2(\Omega_d \times \Omega_d, \nu_\alpha \otimes \nu_\gamma)$ such that
\begin{equation*}
	\int_{\Omega_d} \psibd(\omega', \omega) \, \nu_\alpha(d\omega') = 0, \nu_\gamma \text{ a.e.\ } \omega \text{ and } \int_{\Omega_d} \psibd(\omega, \omega') \, \nu_\gamma(d\omega') = 0, \nu_\alpha \text{ a.e.\ } \omega.
\end{equation*}
The following lemma gives asymptotic distribution of certain statistics of the $K$-type population introduced above.

\begin{lemma} \label{lemchap2multitype:balanced populations}
	Let $\{\phibd_\alpha, \psibd_{\alpha\gamma}, \phibd: \alpha, \gamma \in \Kbd \}$ be a collection of functions such that, $\phibd_\alpha \in  L^2_c(\Omega_d, \nu_\alpha)$ for each $\alpha \in \Kbd$, $\psibd_{\alpha\gamma} \in L^2_c(\Omega_d \times \Omega_d, \nu_\alpha \otimes \nu_\gamma)$ for each pair of $\alpha, \gamma \in \Kbd$, and $\phibd \in L^2_c(\Omega_d^K, \nuhat)$.
	For each $N \in \Nmb$, let
	\begin{equation*}
		\zeta^N = (\zeta^N_1, \zeta^N_2, \zeta^N_3, \zeta^N_4), \quad \eta = (\eta_1, \eta_2, \eta_3, \eta_4),
	\end{equation*}
	where
	\begin{align*}
		\zeta^N_1 = \big( \frac{1}{\sqrt{N}} \sum_{i=1}^N \phibd_\alpha(H^i_\alpha) \big)_{\alpha = 1}^K, & & \zeta^N_2 & = \big( \frac{1}{N} \sum_{1 \le i \ne j \le N} \psibd_{\alpha\alpha}(H^i_\alpha,H^j_\alpha) \big)_{\alpha = 1}^K, \\
		\zeta^N_3 =  \big( \frac{1}{N} \sum_{i,j=1}^N \psibd_{\alpha\gamma}(H^i_\alpha,H^j_\gamma) \big)_{1 \le \alpha \ne \gamma \le K}, & & \zeta^N_4 & =  \frac{1}{N^{K/2}} \sum_{i_1,\dotsc,i_K=1}^N \phibd(H_1^{i_1},\dots,H_K^{i_K}),
	\end{align*}
	and
	\begin{equation*}
		\eta_1 = \big( I_1(\phihatbd_\alpha) \big)_{\alpha = 1}^K, \quad
		\eta_2 = \big( I_2(\psihatbd_{\alpha\alpha}^{sym}) \big)_{\alpha = 1}^K, \quad
		\eta_3 = \big( I_2(\psihatbd_{\alpha\gamma}^{sym}) \big)_{1 \le \alpha \ne \gamma \le K}, \quad
		\eta_4 = I_K(\phihatbd^{sym}).
	\end{equation*}
	Here $\{ I_k \}_{k \ge 1}$ are as defined in Section~\ref{secchap2multitype:generalization of CLT}.
	Then $\zeta^N \Rightarrow \eta$ as a sequence of $\Rmb^q$ valued random variables, as $N \to \infty$, where $q = K + K + K(K-1) + 1$.
\end{lemma}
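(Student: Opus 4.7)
The plan is to lift every statistic in the lemma to the product space $\Omega_d^K$ equipped with $\nuhat = \nu_1 \otimes \dotsb \otimes \nu_K$, so that each becomes, up to asymptotically negligible error, a centered symmetric statistic of the i.i.d.\ sequence $\{\Hbd^i\}_{i \ge 1}$ of $\Omega_d^K$-valued random variables. Joint convergence will then follow from Theorem~\ref{thmchap2multitype:Dynkin} applied with $\Smb = \Omega_d^K$ and $\nu = \nuhat$, together with a Cram\'er-Wold argument to handle the finitely many kernels of each fixed order that appear.

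First I will verify that the lifted kernels $\phihatbd_\alpha$, $\psihatbd^{sym}_{\alpha\gamma}$, and $\phihatbd^{sym}$ lie in $L^2_{c,sym}$ of the appropriate tensor power of $(\Omega_d^K, \nuhat)$. For $\phihatbd_\alpha$ this is immediate from $\phibd_\alpha \in L^2_c(\Omega_d, \nu_\alpha)$. For $\psihatbd^{sym}_{\alpha\gamma}$, symmetry is built into the definition and centering in each variable follows from the double-centering of $\psibd_{\alpha\gamma}$ via Fubini. For $\phihatbd^{sym}$, symmetry is immediate from the average over all permutations, and centering in the $r$-th variable reduces, for each summand indexed by a permutation $\pi$, to integrating $\nu_{k}(d\omega^r_k)$ against the $k$-th argument of $\phibd$ where $k = \pi^{-1}(r)$; this vanishes because $\phibd$ is centered in each of its $K$ coordinates.

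Next I rewrite each statistic. Trivially $\zeta^N_{1,\alpha} = N^{-1/2}\sigma^N_1(\phihatbd_\alpha)$, and symmetrizing the off-diagonal sum gives $\zeta^N_{2,\alpha} = 2N^{-1}\sigma^N_2(\psihatbd^{sym}_{\alpha\alpha})$. For $\alpha\ne\gamma$, separating the diagonal $i=j$ yields $\zeta^N_{3,\alpha\gamma} = N^{-1}\sum_i \psibd_{\alpha\gamma}(H^i_\alpha,H^i_\gamma) + 2N^{-1}\sigma^N_2(\psihatbd^{sym}_{\alpha\gamma})$; since $H^i_\alpha$ and $H^i_\gamma$ are independent and $\psibd_{\alpha\gamma}$ is centered in each variable, the diagonal term converges to $0$ almost surely by the classical LLN. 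The main work is on $\zeta^N_4$: I decompose the $K$-fold sum over $(i_1,\dotsc,i_K)$ according to the set-partition $\mathcal{P}$ of $\{1,\dotsc,K\}$ that records which positions share the same index. The finest partition (all singletons) contributes
\begin{equation*}
	\frac{1}{N^{K/2}} \sum_{\text{distinct } j_1,\dotsc,j_K} \phibd(H^{j_1}_1,\dotsc,H^{j_K}_K) = \frac{K!}{N^{K/2}} \sigma^N_K(\phihatbd^{sym})
\end{equation*}
by the very definition of $\phihatbd^{sym}$, and Theorem~\ref{thmchap2multitype:Dynkin} sends this to $I_K(\phihatbd^{sym}) = \eta_4$. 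For any coarser partition $\mathcal{P}$ with $p<K$ blocks $B_1,\dotsc,B_p$, the contribution is $N^{-K/2}$ times a $p$-fold U-statistic with kernel $\phibd^{\mathcal{P}}(\omega^1,\dotsc,\omega^p) = \phibd(\omega^{r_1}_1,\dotsc,\omega^{r_K}_K)$, where $r_k$ labels the block containing $k$. Integrating $\nuhat(d\omega^r)$ amounts to integrating $\prod_{k \in B_r} \nu_k$ against the corresponding arguments of $\phibd$, which is zero upon applying centering to any one $k^* \in B_r$ and using Fubini. Hence $\phibd^{\mathcal{P}}$ is centered in each of its $p$ variables; after symmetrization, Theorem~\ref{thmchap2multitype:Dynkin} implies the $p$-fold U-statistic is $O_P(N^{p/2})$, and the prefactor $N^{-K/2}$ renders it $o_P(1)$ since $p<K$.

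The main obstacle is this partition-based bookkeeping for $\zeta^N_4$: one must set up the index-coincidence decomposition and verify that every non-maximal partition yields a sum with a kernel that is centered in each of its arguments. The assumption that $\phibd \in L^2_c(\Omega_d^K, \nuhat)$ is centered \emph{in each coordinate} is exactly what drives this degeneracy. Once the reduction of all four statistics to centered symmetric statistics on $(\Omega_d^K, \nuhat)$ is in place, joint convergence follows from Theorem~\ref{thmchap2multitype:Dynkin}: the theorem as stated addresses one kernel per order, but Cram\'er-Wold applied to arbitrary real linear combinations within each of the fixed orders $1$, $2$, and $K$, together with the linearity of both $\sigma^N_k$ and $I_k$, reduces our finite collection to a single-kernel instance at each order and yields $\zeta^N \Rightarrow \eta$.
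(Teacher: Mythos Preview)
Your proof is correct and follows essentially the same route as the paper: lift everything to $(\Omega_d^K,\nuhat)$, rewrite each $\zeta^N_j$ as a centered symmetric statistic of the i.i.d.\ sequence $\{\Hbd^i\}$ plus a remainder, and invoke Theorem~\ref{thmchap2multitype:Dynkin}. The one noteworthy difference is in the treatment of the remainder for $\zeta^N_4$: you decompose the full $K$-fold sum by set partitions and bound each non-maximal block via a further appeal to Dynkin--Mandelbaum at order $p<K$, whereas the paper handles all non-distinct tuples in one stroke by observing that $H^{i_1}_1,\dotsc,H^{i_K}_K$ are always independent with the correct marginals (even when some $i_k$'s coincide, since the types differ), so the cross terms vanish by centering, the diagonal terms all equal $\|\phibd\|^2_{L^2(\nuhat)}$, and the $L^2$-norm of the remainder is simply $N^{-K}|\Nbd^K\setminus\Smc|\cdot\|\phibd\|^2_{L^2(\nuhat)}=o(1)$. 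Your argument is valid but more elaborate; the paper's counting argument is the shorter path here. On the other hand, you make explicit the Cram\'er--Wold reduction needed to pass from Theorem~\ref{thmchap2multitype:Dynkin} (one kernel per order) to joint convergence of several kernels at orders $1$ and $2$, a point the paper leaves implicit.
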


\begin{proof}
	Note that for $\alpha \in \Kbd$, we have
	\begin{gather*}
		\frac{1}{\sqrt{N}} \sum_{i=1}^N \phibd_\alpha(H^i_\alpha) = \frac{1}{\sqrt{N}} \sum_{i=1}^N \phihatbd_\alpha(\Hbd^i), \\
		\frac{1}{N} \sum_{1 \le i \ne j \le N} \psibd_{\alpha\alpha}(H^i_\alpha,H^j_\alpha) = \frac{2}{N} \sum_{1 \le i < j \le N} \psihatbd^{sym}_{\alpha\alpha} (\Hbd^i,\Hbd^j).
	\end{gather*}
	Also, for $\alpha, \gamma \in \Kbd$ such that $\alpha \ne \gamma$, we have
	\begin{align*}
		\frac{1}{N} \sum_{i,j=1}^N \psibd_{\alpha\gamma}(H^i_\alpha,H^j_\gamma) 
		& = \frac{2}{N} \sum_{1 \le i < j \le N} \psihatbd^{sym}_{\alpha\gamma} (\Hbd^i,\Hbd^j) + \frac{1}{N} \sum_{i=1}^N \psibd_{\alpha\gamma}(H^i_\alpha,H^i_\gamma),
	\end{align*}
	Let $\Smc = \{(i_1,\dotsc,i_K) \in \Nbd^K : i_1,\dotsc,i_K \text{ are distinct}\}$.
	Then we have
	\begin{align*}
		& \frac{1}{N^{K/2}} \sum_{i_1,\dotsc,i_K=1}^N \phibd(H_1^{i_1},\dots,H_K^{i_K}) \\
		& \quad = \frac{1}{N^{K/2}} \sum_{(i_1,\dotsc,i_K) \in \Smc} \phibd(H_1^{i_1},\dots,H_K^{i_K}) + \frac{1}{N^{K/2}} \sum_{(i_1,\dotsc,i_K) \notin \Smc} \phibd(H^{i_1}_1,\dots,H^{i_K}_K)\\
		& \quad = \frac{K!}{N^{K/2}} \sum_{1 \le i_1 < \dotsb < i_K \le N} \phihatbd^{sym}(\Hbd^{i_1},\dots,\Hbd^{i_K}) + \frac{1}{N^{K/2}} \sum_{(i_1,\dotsc,i_K) \notin \Smc} \phibd(H^{i_1}_1,\dots,H^{i_K}_K).
	\end{align*}
	Combining above results gives us
	\begin{equation} \label{eqchap2multitype:zeta N}
		\zeta^N = \zetatil^N + \Rmc^{N,1} = (\zetatil^N_1, \zetatil^N_2, \zetatil^N_3, \zetatil^N_4) + (\mathbf{0}_{K \times 1}, \mathbf{0}_{K \times 1}, \Rmc^{N,1}_3, \Rmc^{N,1}_4),
	\end{equation}
	where
	\begin{align*}
		\zetatil^N_1 = \big( \frac{1}{\sqrt{N}} \sum_{i=1}^N \phihatbd_\alpha(\Hbd^i) \big)_{\alpha = 1}^K, & &
		\zetatil^N_2 & =  \big( \frac{2}{N} \sum_{1 \le i < j \le N} \psihatbd^{sym}_{\alpha\alpha} (\Hbd^i,\Hbd^j) \big)_{\alpha = 1}^K, \\
		\zetatil^N_3 = \big( \frac{2}{N} \sum_{1 \le i < j \le N} \psihatbd^{sym}_{\alpha\gamma} (\Hbd^i,\Hbd^j) \big)_{1 \le \alpha \ne \gamma \le K}, & &
		\zetatil^N_4 & = \frac{K!}{N^{K/2}} \sum_{1 \le i_1 < \dotsb < i_K \le N} \phihatbd^{sym}(\Hbd^{i_1},\dots,\Hbd^{i_K}),
	\end{align*}
	and
	\begin{gather*}
		\Rmc^{N,1}_3 = \big( \frac{1}{N} \sum_{i=1}^N \psibd_{\alpha\gamma}(H^i_\alpha,H^i_\gamma) \big)_{1 \le \alpha \ne \gamma \le K}, \quad
		\Rmc^{N,1}_4 = \frac{1}{N^{K/2}} \sum_{(i_1,\dotsc,i_K) \notin \Smc} \phibd(H^{i_1}_1,\dots,H^{i_K}_K).
	\end{gather*}
	From Theorem~\ref{thmchap2multitype:Dynkin} it follows that as $N \to \infty$,
	\begin{equation} \label{eqchap2multitype:zetatil converge to eta}
		\zetatil^N \Rightarrow \eta.
	\end{equation}
	By law of large numbers, we have $\Rmc^{N,1}_3 \to \mathbf{0}_{K(K-1) \times 1}$ in probability as $N \to \infty$.
	Note that as $N \to \infty$,
	\begin{equation*}
		\Emb (\Rmc^{N,1}_4)^2 = \frac{1}{N^K} \sum_{(i_1,\dotsc,i_K) \notin \Smc} \Emb [\phibd(H^{i_1}_1,\dots,H^{i_K}_K)]^2 \to 0,
	\end{equation*}
	since $|\Nbd^K \backslash \Smc| = N^K - N (N-1) \dotsm (N-K+1) = o(N^K)$.
	Hence
	\begin{equation} \label{eqchap2multitype:R N1 vanishes}
		\Rmc^{N,1} \to \mathbf{0}_{q \times 1}
	\end{equation} 
	in probability as $N \to \infty$. 
	The desired result follows by combining~\eqref{eqchap2multitype:zeta N},~\eqref{eqchap2multitype:zetatil converge to eta} and~\eqref{eqchap2multitype:R N1 vanishes}.
\end{proof}

For studying the asymptotic behavior of $J^{N,1}$ and $J^{N,2}$, we will need an extension of Lemma~\ref{lemchap2multitype:balanced populations} to a setting where the numbers of particles of different types may differ.
Before discussing this extension, we present an elementary lemma, the proof of which is given in Appendix~\ref{Appendixchap2multitype:Proof of Lemma of convergence via approximation}.

\begin{lemma} \label{lemchap2multitype:convergence via approximation}
	For $m,n = 1,2,\dotsc,$ let $\xi_{mn}, \xi_n$ be $\Rmb^d$-valued random variables defined on some probability space $(\Omega_0, \Fmc_0, \Pmb_0)$, and $\eta_m, \eta$ be $\Rmb^d$-valued random variables defined on some other probability space $(\Omega_0', \Fmc_0', \Pmb_0')$. 
	If for each $m \ge 1$, $\xi_{mn} \Rightarrow \eta_m$ as $n \to \infty$ and the following condition holds:
	\begin{equation*}
		\lim_{m \to \infty} \sup_{n \ge 1} \Emb_{\Pmb_0} \big( \|\xi_{mn} - \xi_n\| \wedge 1 \big) = 0, \quad \lim_{m \to \infty} \Emb_{\Pmb_0'} \big( \|\eta_m - \eta\| \wedge 1 \big) = 0,
	\end{equation*}
	then $\xi_n \Rightarrow \eta$ as $n \to \infty$.
\end{lemma}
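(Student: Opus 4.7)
The plan is a standard $3\epsilon$--argument using the fact that the bounded--Lipschitz metric characterizes weak convergence on $\Rmb^d$. I will test against an arbitrary function $f:\Rmb^d\to\Rmb$ with $\|f\|_\infty\le 1$ and Lipschitz constant at most $1$, and show $\Emb_{\Pmb_0} f(\xi_n)\to \Emb_{\Pmb_0'} f(\eta)$ as $n\to\infty$; by Portmanteau this gives $\xi_n\Rightarrow\eta$.

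The first step is the elementary pointwise bound
\begin{equation*}
|f(x)-f(y)| \;\le\; 2\bigl(\|x-y\|\wedge 1\bigr), \qquad x,y\in\Rmb^d,
\end{equation*}
which follows from combining $|f(x)-f(y)|\le 2\|f\|_\infty\le 2$ with $|f(x)-f(y)|\le\|x-y\|$. Taking expectations gives
\begin{equation*}
|\Emb_{\Pmb_0} f(\xi_n) - \Emb_{\Pmb_0} f(\xi_{mn})| \le 2\,\Emb_{\Pmb_0}\bigl(\|\xi_n-\xi_{mn}\|\wedge 1\bigr),
\end{equation*}
and similarly $|\Emb_{\Pmb_0'} f(\eta_m) - \Emb_{\Pmb_0'} f(\eta)| \le 2\,\Emb_{\Pmb_0'}(\|\eta_m-\eta\|\wedge 1)$.

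Then I would use the triangle inequality
\begin{equation*}
|\Emb_{\Pmb_0} f(\xi_n) - \Emb_{\Pmb_0'} f(\eta)| \le |\Emb_{\Pmb_0} f(\xi_n) - \Emb_{\Pmb_0} f(\xi_{mn})| + |\Emb_{\Pmb_0} f(\xi_{mn}) - \Emb_{\Pmb_0'} f(\eta_m)| + |\Emb_{\Pmb_0'} f(\eta_m) - \Emb_{\Pmb_0'} f(\eta)|.
\end{equation*}
Given $\epsilon>0$, by the two displayed hypotheses I can first pick $m$ large so that the first term is $<\epsilon/3$ uniformly in $n$ and the third term is $<\epsilon/3$. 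For this fixed $m$ the assumption $\xi_{mn}\Rightarrow\eta_m$ (as $n\to\infty$) together with $f$ being bounded and continuous yields $\Emb_{\Pmb_0} f(\xi_{mn}) \to \Emb_{\Pmb_0'} f(\eta_m)$, so I can then pick $n$ large enough that the middle term is $<\epsilon/3$. Since the supremum is over a uniformity class (Lipschitz functions bounded by $1$ with Lipschitz constant at most $1$) and the bounds above are uniform in $f$ in that class, the argument in fact shows $d_{BL}(\Lmc(\xi_n),\Lmc(\eta))\to 0$, which is exactly $\xi_n\Rightarrow\eta$.

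There is no real obstacle here; the only thing to be careful about is that the two uniformity statements play different roles, namely the bound on $\Emb_{\Pmb_0}(\|\xi_n-\xi_{mn}\|\wedge 1)$ must be uniform in $n$ (so that one can first fix $m$ and then let $n\to\infty$), whereas the bound on the $\eta_m$ side needs no such uniformity. Both of these are exactly what the hypotheses supply.
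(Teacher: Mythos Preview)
Your proof is correct and follows essentially the same approach as the paper's: both test against bounded Lipschitz $f$ with $\|f\|_\infty\le 1$ and Lipschitz constant at most $1$, use the bound $|f(x)-f(y)|\le 2(\|x-y\|\wedge 1)$, and apply the same three-term triangle inequality; the paper simply phrases the limit argument via $\limsup_{n\to\infty}$ followed by $m\to\infty$ rather than your equivalent $3\epsilon$ formulation.
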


We now study the following extension of Lemma~\ref{lemchap2multitype:balanced populations}.
\begin{lemma} \label{lemchap2multitype:unbalanced populations}
	For $\alpha \in \Kbd$, let $N_\alpha$ be a function of $N$ such that $\lim_{N \to \infty} N_\alpha = \infty$.
	Let $\{\phibd_\alpha, \psibd_{\alpha\gamma}, \phibd, \alpha, \gamma \in \Kbd \}$, $\eta$ and $q$ be as in Lemma~\ref{lemchap2multitype:balanced populations}.
	For each $N \in \Nmb$, let
	\begin{equation*}
		\xi^N = (\xi^N_1, \xi^N_2, \xi^N_3, \xi^N_4),
	\end{equation*}
	where
	\begin{gather*}
		\xi^N_1 = \big( \frac{1}{\sqrt{N_\alpha}} \sum_{i=1}^{N_\alpha} \phibd_\alpha(H^i_\alpha) \big)_{\alpha = 1}^K, \quad
		\xi^N_2 = \big( \frac{1}{N_\alpha} \sum_{1 \le i \ne j \le N_\alpha} \psibd_{\alpha\alpha}(H^i_\alpha,H^j_\alpha) \big)_{\alpha = 1}^K, \\
		\xi^N_3 = \big( \frac{1}{\sqrt{N_\alpha N_\gamma}} \sum_{i=1}^{N_\alpha} \sum_{j=1}^{N_\gamma} \psibd_{\alpha\gamma}(H^i_\alpha,H^j_\gamma) \big)_{1 \le \alpha \ne \gamma \le K}, \\
		\xi^N_4 = \frac{1}{\sqrt{N_1 \dotsm N_K}} \sum_{i_1=1}^{N_1} \dotsb \sum_{i_K=1}^{N_K} \phibd(H_1^{i_1},\dots,H_K^{i_K}).
	\end{gather*}
	Then $\xi^N \Rightarrow \eta$ as a sequence of $\Rmb^q$ valued random variables, as $N \to \infty$.
\end{lemma}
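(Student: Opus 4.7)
The plan is to reduce this unbalanced $K$-sample setting to the balanced case already handled in Lemma~\ref{lemchap2multitype:balanced populations}, exploiting the product structure $\nuhat = \nu_1 \otimes \dotsb \otimes \nu_K$ to observe that the coordinates $\{H_\alpha^i : i \ge 1, \alpha \in \Kbd\}$ form a \emph{mutually} independent family with $H_\alpha^i \sim \nu_\alpha$. First I would apply Lemma~\ref{lemchap2multitype:convergence via approximation} together with a standard $L^2$-density argument to reduce to the case where $\phibd_\alpha$, $\psibd_{\alpha\gamma}$, and $\phibd$ are each finite linear combinations of bounded tensor-product functions with centered factors. The uniform-in-$N$ hypothesis of Lemma~\ref{lemchap2multitype:convergence via approximation} will be verified via second-moment computations on each of the four blocks of $\xi^N$: the centering hypotheses annihilate all off-diagonal cross moments, giving bounds such as $\Emb(\xi^N_{3,(\alpha,\gamma)})^2 \le \|\psibd_{\alpha\gamma}\|^2_{L^2(\nu_\alpha \otimes \nu_\gamma)}$ and $\Emb(\xi^N_4)^2 \le \|\phibd\|^2_{L^2(\nuhat)}$, with analogous estimates for $\xi^N_1$ and $\xi^N_2$.

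For tensor-product data, introduce the type-$\alpha$ empirical objects
\begin{equation*}
U_\alpha^N(f) = \frac{1}{\sqrt{N_\alpha}} \sum_{i=1}^{N_\alpha} f(H_\alpha^i), \quad V_\alpha^N(g) = \frac{1}{N_\alpha} \sum_{1 \le i \ne j \le N_\alpha} g(H_\alpha^i, H_\alpha^j),
\end{equation*}
for bounded $f \in L^2_c(\Omega_d, \nu_\alpha)$ and bounded symmetric $g \in L^2_c(\Omega_d \times \Omega_d, \nu_\alpha \otimes \nu_\alpha)$. Each block of $\xi^N$ is then a multilinear polynomial in these quantities: if $\psibd_{\alpha\gamma} = f \otimes g$ with $\alpha \ne \gamma$ then $\xi^N_{3,(\alpha,\gamma)} = U_\alpha^N(f) U_\gamma^N(g)$, and if $\phibd = f_1 \otimes \dotsb \otimes f_K$ then $\xi^N_4 = \prod_{\alpha=1}^K U_\alpha^N(f_\alpha)$. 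Since $N_\alpha \to \infty$ and $\{H_\alpha^i\}_{i \ge 1}$ is i.i.d.\ with law $\nu_\alpha$, Theorem~\ref{thmchap2multitype:Dynkin} applied to each type separately yields joint convergence of $(U_\alpha^N, V_\alpha^N)$ to the appropriate single-type Wiener-chaos objects, and mutual independence across $\alpha$ (of both the prelimit samples and the limits) upgrades this to joint convergence of the full vector indexed by $\alpha \in \Kbd$. The continuous mapping theorem then delivers the joint convergence of $\xi^N$ to the corresponding polynomial in these per-type limits.

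Identification of this polynomial with the Wiener-chaos expressions defining $\eta$ relies on the classical multiplicative identity $I_1(\tilde f) I_1(\tilde g) = I_2(\tilde f \mathbin{\hat\otimes} \tilde g)$ for centered $\tilde f, \tilde g \in L^2(\nuhat)$ with $\langle \tilde f, \tilde g \rangle_{L^2(\nuhat)} = 0$, together with its $K$-fold generalization, applied to the lifted factors $\tilde f_\alpha(\omega) = f_\alpha(\omega_\alpha)$; these lifts are pairwise orthogonal in $L^2(\nuhat)$ for different $\alpha$ by centering of the $f_\alpha$. The main technical burden will be the approximation step itself: one must $L^2$-approximate $\phibd, \psibd_{\alpha\gamma}, \phibd_\alpha$ by bounded centered tensor products while preserving centering in every marginal simultaneously, and transfer the resulting $L^2$ control into the uniform bound on $\Emb(\|\xi^N - \xi^{N,m}\| \wedge 1)$ demanded by Lemma~\ref{lemchap2multitype:convergence via approximation}, via the second-moment estimates above. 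Once this reduction is in place, the remainder is a routine assembly of per-type limit theorems together with the factorization across types inherited from $\nuhat$.
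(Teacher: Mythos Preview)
Your proposal is correct and follows essentially the same strategy as the paper: approximate $\psibd_{\alpha\gamma}$ (for $\alpha\ne\gamma$) and $\phibd$ in $L^2$ by finite sums of centered tensor products, control the approximation error uniformly in $N$ by second-moment computations that exploit the centering, and then handle the tensor-product case by applying Theorem~\ref{thmchap2multitype:Dynkin} type by type and using the mutual independence of the families $\{H^i_\alpha\}_i$ across $\alpha$. The paper carries out exactly this, using the CONS $\{\one_{\Omega_d},e_\alpha^1,e_\alpha^2,\dots\}$ of each $L^2(\nu_\alpha)$ for the approximation, and invoking Lemma~\ref{lemchap2multitype:convergence via approximation} to pass to the limit.

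The one point of genuine difference is the identification of the per-type limit with the Wiener-chaos expressions defining $\eta$. You do this by appealing directly to the product formula $I_1(\tilde f_\alpha)I_1(\tilde f_\gamma)=I_2(\tilde f_\alpha \mathbin{\hat\otimes}\tilde f_\gamma)$ for orthogonal lifts, and its $K$-fold extension. The paper instead avoids invoking this identity explicitly: it introduces the balanced approximant $\zeta^{MN}$ and observes that, by the per-type argument, $\zeta^{MN}\Rightarrow\etatil^M$ (the polynomial in the independent fields $\Itil_1^\alpha$), while by Lemma~\ref{lemchap2multitype:balanced populations}, $\zeta^{MN}\Rightarrow\eta^M$; hence $\etatil^M\stackrel{d}{=}\eta^M$. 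Your route is more direct and relies on a standard Wiener-chaos fact; the paper's route is slightly more self-contained in that it recycles the balanced-case lemma to manufacture the identification. Both are valid and of comparable length.
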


\begin{proof}
	For each $\beta = 1,\dots,K$, let $\{\one_{\Omega_d}, e_\beta^1, e_\beta^2, \dotsc\}$ be a complete orthonormal system (CONS) in $L^2(\Omega_d,\nu_\beta)$.
	Note that $\langle \phibd_\alpha, \one_{\Omega_d} \rangle_{L^2(\Omega_d,\nu_\alpha)} = 0$ for all $\alpha \in \Kbd$, and analogous orthogonality properties with the function $\one_{\Omega_d}$ hold for $\psibd_{\alpha \gamma}$ and $\phibd$, for all $\alpha, \gamma \in \Kbd$.
	(For example, $\langle \psibd_{\alpha\gamma}, \one_{\Omega_d} \otimes e^j_\gamma \rangle_{L^2(\Omega_d \times \Omega_d,\nu_\alpha \otimes \nu_\gamma)} = 0$ for all $\alpha, \gamma \in \Kbd$ and $j \ge 1$).
	Because of this, 
	$\psibd_{\alpha\gamma}^M \to \psibd_{\alpha\gamma}$ in $L^2(\Omega_d \times \Omega_d, \nu_\alpha \otimes \nu_\gamma)$ and $\phibd^M \to \phibd$ in $L^2(\Omega_d^K, \nuhat)$, as $M \to \infty$, where
	\begin{gather*}
		\psibd_{\alpha\gamma}^M(\omega,\omega') = \sum_{m_\alpha=1}^M \sum_{m_\gamma=1}^M c_{\alpha\gamma}^{m_\alpha m_\gamma} e_\alpha^{m_\alpha}(\omega) e_\gamma^{m_\gamma}(\omega'), \quad (\omega, \omega') \in \Omega_d \times \Omega_d, \quad \alpha, \gamma \in \Kbd, \\
		\phibd^M(\omega_1,\dotsc,\omega_K) = \sum_{m_1=1}^M \dotsb \sum_{m_K=1}^M c^{m_1 \dotsm m_K} e_1^{m_1}(\omega_1) \dotsm e_K^{m_K}(\omega_K), \quad (\omega_1, \dotsc, \omega_K) \in \Omega_d^K,
	\end{gather*}
	and $c_{\alpha\gamma}^{m_\alpha m_\gamma} = \langle \psibd_{\alpha\gamma}, e_\alpha^{m_\alpha} \otimes e_\gamma^{m_\gamma} \rangle_{L^2(\Omega_d \times \Omega_d, \nu_\alpha \otimes \nu_\gamma)}$, $c^{m_1 \dotsm m_K} = \langle \phibd, e_1^{m_1} \otimes \dotsb \otimes e_K^{m_K} \rangle_{L^2(\Omega_d^K, \nuhat)}$.
	It follows that as $M \to \infty$, $\psihatbd_{\alpha\gamma}^{M,sym} \to \psihatbd_{\alpha\gamma}^{sym}$ in $L^2(\Omega_d^K \times \Omega_d^K, \nuhat \otimes \nuhat)$ and $\phihatbd^M \to \phihatbd$ in $L^2((\Omega_d^K)^K, \nuhat^{\otimes K})$.
	For $M, N \in \Nmb$, let 
	\begin{equation*}
		\xi^{MN} = (\xi^N_1, \xi^N_2, \xi^{MN}_3, \xi^{MN}_4), \quad \eta^{M} = (\eta_1, \eta_2, \eta^{M}_3, \eta^{M}_4),
	\end{equation*}
	where $\xi^{MN}_3$, $\xi^{MN}_4$ are defined as $\xi^{N}_3$, $\xi^{N}_4$ but with $\psibd^M_{\alpha\gamma}$ and
	$\phibd^M$ instead of $\psibd_{\alpha\gamma}$ and $\phibd$,
	and
	\begin{equation*}
		\eta^{M}_3 = \big( I_2(\psihatbd_{\alpha\gamma}^{M,sym}) \big)_{1 \le \alpha \ne \gamma \le K}, \quad
		\eta^{M}_4 = I_K(\phihatbd^{M,sym}).
	\end{equation*}
	From Lemma~\ref{lemchap2multitype:convergence via approximation}, in order to finish the proof, it suffices to check the following three properties hold:
	\begin{equation}
		\lim_{M \to \infty} \sup_{N \ge 1} \Emb \|\xi^{MN} - \xi^N\|^2 = 0,\; \lim_{M \to \infty} \Emb \|\eta^{M} - \eta\|^2 = 0,\;
			\xi^{MN} \Rightarrow \eta^{M} \text{ as } N \to \infty. \label{eqchap2multitype:xi converge in L2} \\
	\end{equation}
	Note that the first two coordinates of $\xi^{MN}$ and $\eta^M$ are the same as those of $\xi^N$ and $\eta$, respectively.
	So for the first two statements in~\eqref{eqchap2multitype:xi converge in L2}, we only need to check for the third and last coordinates. Consider the first statement. 
	\begin{align*}
		& \lim_{M \to \infty} \sup_{N \ge 1} \Emb (\xi^{MN}_4-\xi^N_4)^2 \\
		& \quad = \lim_{M \to \infty} \sup_{N \ge 1} \frac{1}{N_1 \dotsm N_K} \sum_{i_1=1}^{N_1} \dotsb \sum_{i_K=1}^{N_K} \Emb \big( \phibd^M(H_1^{i_1},\dots,H_K^{i_K}) - \phibd(H_1^{i_1},\dots,H_K^{i_K}) \big)^2 \\
		& \quad = \lim_{M \to \infty} \Emb \big( \phibd^M(H^1_1, \dotsc, H^1_K) - \phibd(H^1_1, 	\dotsc, H^1_K) \big)^2 = 0.
	\end{align*}
	Similarly, for each pair of $\alpha, \gamma \in \Kbd$ such that $\alpha \ne \gamma$, 
	$$\lim_{M \to \infty} \sup_{N \ge 1} \Emb \Big( \frac{1}{\sqrt{N_\alpha N_\gamma}} \sum_{i=1}^{N_\alpha} \sum_{j=1}^{N_\gamma} \psibd^M_{\alpha\gamma}(H^i_\alpha,H^j_\gamma) - \frac{1}{\sqrt{N_\alpha N_\gamma}} \sum_{i=1}^{N_\alpha} \sum_{j=1}^{N_\gamma} \psibd_{\alpha\gamma}(H^i_\alpha,H^j_\gamma) \Big)^2=0.$$
	This proves the first statement in~\eqref{eqchap2multitype:xi converge in L2}.
	Recalling the isometry property of MWI $\eqref{eqchap2multitype:moments of MWI}$, we have for each pair of $\alpha, \gamma \in \Kbd$ such that $\alpha \ne \gamma$,
	\begin{gather*}
		\lim_{M \to \infty} \Emb \Big( I_2(\psihatbd_{\alpha\gamma}^{M,sym}) - I_2(\psihatbd_{\alpha\gamma}^{sym}) \Big)^2 = \lim_{M \to \infty} 2 \Big\| \psihatbd_{\alpha\gamma}^{M,sym} - \psihatbd_{\alpha\gamma}^{sym} \Big\|^2_{L^2(\Omega_d^K \times \Omega_d^K, \nuhat \otimes \nuhat)} = 0.
	\end{gather*}
	Thus the second statement in~\eqref{eqchap2multitype:xi converge in L2} holds with $\eta^M$ and $\eta$ replaced by $\eta^M_3$ and $\eta_3$, respectively.
	Similarly, for $\eta^{M}_4$, we have
	\begin{equation*}
		\lim_{M \to \infty} \Emb (\eta^{M}_4-\eta_4)^2 = \lim_{M \to \infty} K! \, \Big\| \phihatbd^{M,sym} - \phihatbd^{sym} \Big\|^2_{L^2((\Omega_d^K)^K, \nuhat^{\otimes K})} = 0.
	\end{equation*}
	Combining the above observations, we have the second statement in~\eqref{eqchap2multitype:xi converge in L2}.

	Finally we check the third statement in~\eqref{eqchap2multitype:xi converge in L2}.
	Let $\{\Itil_1^\alpha(h), h \in L^2(\nu_\alpha)\}_{\alpha=1}^K$ be $K$ mutually independent Gaussian fields such that
	\begin{equation*}
		\Emb \big( \Itil_1^\alpha(h) \big) = 0, \: \Emb \big (\Itil_1^\alpha(h)\Itil_1^\alpha(g) \big) = \langle h,g \rangle_{L^2(\nu_\alpha)} \text{ for all } h,g \in L^2(\nu_\alpha), \alpha \in \Kbd.
	\end{equation*}
	Let
	$\etatil^{M} = (\etatil_1, \etatil_2, \etatil^{M}_3, \etatil^{M}_4),
	$
	where
	\begin{gather*}
		\etatil_1 = \big( \Itil_1^\alpha(\phihatbd_\alpha) \big)_{\alpha = 1}^K, \quad
		\etatil_2 = \big( \Itil_2^\alpha(\psihatbd_{\alpha\alpha}^{sym}) \big)_{\alpha = 1}^K, \\
		\etatil^{M}_3 = \big( \sum_{m_\alpha=1}^M \sum_{m_\gamma=1}^M c_{\alpha\gamma}^{m_\alpha m_\gamma} \Itil_1^\alpha(e_\alpha^{m_\alpha}) \Itil_1^\gamma(e_\gamma^{m_\gamma}) \big)_{1 \le \alpha \ne \gamma \le K}, \\
		\etatil^{M}_4 = \sum_{m_1=1}^M \dotsb \sum_{m_K=1}^M c^{m_1 \dotsm m_K} \Itil_1^1(e_1^{m_1}) \dotsm \Itil_1^K(e_K^{m_K}),
	\end{gather*}
	and $\Itil_2^\alpha$ is as defined below~\eqref{eqchap2multitype:MWI formula} by replacing $I_1$ there with $\Itil_1^\alpha$.
	Theorem~\ref{thmchap2multitype:Dynkin} and mutual independence of $\{H_\alpha^i : i = 1, \dotsc, N_\alpha, \alpha = 1, \dotsc, K\}$ imply that for each $M \ge 1$, as $N \to \infty$,
$\xi^{MN} \Rightarrow \etatil^{M}.$

	In order to verify the third condition in~\eqref{eqchap2multitype:xi converge in L2},  it now suffices to show $\etatil^{M}$ and $\eta^M$ have the same probability distribution.
	However, this follows easily by considering the asymptotic behavior of
	\begin{equation*}
		\zeta^{MN} = (\zeta^N_1, \zeta^N_2, \zeta^{MN}_3, \zeta^{MN}_4),
	\end{equation*}
	where $\zeta^N_1$ and $\zeta^N_2$ are as in Lemma~\ref{lemchap2multitype:balanced populations}, and $\zeta^{MN}_3$ [resp.\ $\zeta^{MN}_4$] are also as in Lemma~\ref{lemchap2multitype:balanced populations} but with $\psibd_{\alpha\gamma}$ [resp.\ $\phibd$] replaced with $\psibd^M_{\alpha\gamma}$ [resp.\ $\phibd^M$].
	Once more by Theorem~\ref{thmchap2multitype:Dynkin} and mutual independence of $\{H_\alpha^i : i = 1, \dotsc, N_\alpha, \alpha = 1, \dotsc, K\}$, we get as $N \to \infty$, 
	$\zeta^{MN} \Rightarrow \etatil^{M}$.
	On the other hand, Lemma~\ref{lemchap2multitype:balanced populations} implies that as $N \to \infty$,
	$\zeta^{MN} \Rightarrow \eta^{M}$.
	Combining these observations, we see that $\etatil^M$ and $\eta^M$ have the same probability distribution, which finishes the proof.
\end{proof}


\subsubsection{Asymptotics of $J^N$} \label{secchap2multitype:asymptotics of JN}

Recall the definition of $J^{N,1}$, $V^i$, $h_{\alpha\gamma}$, $\hhat_{\alpha\gamma}$ and $\hhat$ in~\eqref{eqchap2multitype:JN1},~\eqref{eqchap2multitype:Vi},~\eqref{eqchap2multitype:h alpha gamma} and~\eqref{eqchap2multitype:hhat} respectively.  All convergence statements in this section are under $\Pmb^N$.
It follows by law of large numbers that for $\alpha \in \Kbd$, as $N \to \infty$,
\begin{equation*}
	\frac{1}{N_\alpha} \sum_{i \in \Nalpha} h_{\alpha\alpha}(V^i,V^i) \Rightarrow \int_{\Omega_d} h_{\alpha\alpha}(\omega,\omega) \, \nu_\alpha(d\omega) = 0.
\end{equation*}
By Lemma~\ref{lemchap2multitype:unbalanced populations} and the above result, we have the following convergence  as $N \to \infty$:
\begin{align*}
	J^{N,1}(T) & = \sum_{\alpha=1}^{K} \sum_{i \in \Nalpha} \int_0^T \sum_{\gamma=1}^{K} \frac{1}{N_\gamma} \sum_{j \in \Ngamma} b_{\alpha\gamma,t}(\Xit,\Xjt) \cdot d \Wit \\
	& = \sum_{\alpha=1}^{K} \sum_{\gamma=1}^{K} \sqrt{\frac{\lambda_\gamma N_\alpha}{\lambda_\alpha N_\gamma}} \frac{1}{\sqrt{N_\alpha N_\gamma}} \sum_{i \in \Nalpha} \sum_{j \in \Ngamma} h_{\alpha\gamma}(V^i,V^j) \Rightarrow I_2(\hhat^{sym}),
\end{align*}
where $\hhat^{sym}$ is defined as in~\eqref{eqchap2multitype:psi hat sym} with $\psihatbd$ replaced by $\hhat$.

Recall the definition of $J^{N,2}$ in~\eqref{eqchap2multitype:JN2}.
We split $J^{N,2}$ as follows:
\begin{align*}
	J^{N,2}(T) & = \sum_{\alpha=1}^{K} \sum_{i \in \Nalpha} \sum_{\beta=1}^{K} \sum_{j \in \Nbeta} \sum_{\gamma=1}^{K} \sum_{k \in \Ngamma} \frac{1}{N_\beta N_\gamma} \int_0^T b_{\alpha\beta,t}(\Xit,\Xjt) \cdot b_{\alpha\gamma,t}(\Xit,\Xkt) \, dt \\
	& = \sum_{n=1}^5 \sum_{\Smc_n} \frac{1}{N_\beta N_\gamma} \int_0^T b_{\alpha\beta,t}(\Xit,\Xjt) \cdot b_{\alpha\gamma,t}(\Xit,\Xkt) \, dt = \sum_{n=1}^5 \Tmc^N_n,
\end{align*}
where $\Smc_1$, $\Smc_2$, $\Smc_3$, $\Smc_4$ and $\Smc_5$ are collection of $(\alpha,\beta,\gamma) \in \Kbd^3$ and $(i,j,k) \in \Nalpha \times \Nbeta \times \Ngamma$ such that $\{i=j=k\}$, $\{i=j \ne k\}$, $\{i=k \ne j\}$, $\{j=k \ne i\}$ and $\{i,j,k \text{ distinct}\}$, respectively.
For the term $\Tmc^N_1$, from the boundedness of $b_{\alpha\beta,t}$ it follows that as $N \to \infty$,
\begin{equation*}
	\Tmc^N_1 = \sum_{\alpha=1}^{K} \frac{1}{N_\alpha^2} \sum_{i \in \Nalpha} \int_0^T \| b_{\alpha\alpha,t}(\Xit,\Xit) \|^2 \, dt \Rightarrow 0.
\end{equation*}
For $\Tmc^N_2$, let $\Smc_{\alpha\gamma} = \{(i,k) \in \Nalpha \times \Ngamma : i \ne k\}$ for $\alpha, \gamma \in \Kbd$.
Then
\begin{equation*}
	\Tmc^N_2 = \sum_{\alpha,\gamma=1}^K \frac{1}{N_\alpha N_\gamma} \sum_{(i,k) \in \Smc_{\alpha\gamma}} \int_0^T b_{\alpha\alpha,t}(\Xit,\Xit) \cdot b_{\alpha\gamma,t}(\Xit,\Xkt) \, dt = \Tmc^N_{2,1} + \Tmc^N_{2,2},
\end{equation*}
where 
\begin{align*}
	\Tmc^N_{2,1} & = \sum_{\alpha,\gamma=1}^K \frac{1}{N_\alpha N_\gamma} \sum_{(i,k) \in \Smc_{\alpha\gamma}} \int_0^T \Big( b_{\alpha\alpha,t}(\Xit,\Xit) \cdot b_{\alpha\gamma,t}(\Xit,\Xkt) \\
	& \quad - \int_{\Omega_d} b_{\alpha\alpha,t}(X_{*,t}(\omega),X_{*,t}(\omega)) \cdot b_{\alpha\gamma,t}(X_{*,t}(\omega),\Xkt) \, \nu_\alpha(d\omega) \Big) \, dt
\end{align*}
converges to $0$ in $L^2(\Omega_d^N, \Pmb^N)$ as $N \to \infty$, and
\begin{equation*}
	\Tmc^N_{2,2} = \sum_{\alpha,\gamma=1}^K \frac{1}{N_\alpha N_\gamma} \sum_{(i,k) \in \Smc_{\alpha\gamma}} \int_0^T \int_{\Omega_d} b_{\alpha\alpha,t}(X_{*,t}(\omega),X_{*,t}(\omega)) \cdot b_{\alpha\gamma,t}(X_{*,t}(\omega),\Xkt) \, \nu_\alpha(d\omega) \, dt \Rightarrow 0
\end{equation*} 
by the law of large numbers.
Hence $\Tmc^N_2 \Rightarrow 0$ as $N \to \infty$.
Similarly $\Tmc^N_3 \Rightarrow 0$.
Consider now $\Tmc^N_4$.
\begin{equation*}
	\Tmc^N_4 = \sum_{\alpha,\gamma=1}^K \frac{1}{N_\gamma^2} \sum_{(i,k) \in \Smc_{\alpha\gamma}} \int_0^T \| b_{\alpha\gamma,t}(\Xit,\Xkt) \|^2 \, dt = \Tmc^N_{4,1} + \Tmc^N_{4,2},
\end{equation*}
where
\begin{align*}
	\Tmc^N_{4,1} & = \sum_{\alpha,\gamma=1}^K \frac{1}{N_\gamma^2} \sum_{(i,k) \in \Smc_{\alpha\gamma}} \int_0^T \Big( \| b_{\alpha\gamma,t}(\Xit,\Xkt) \|^2
	- \int_{\Omega_d} \| b_{\alpha\gamma,t}(X_{*,t}(\omega),\Xkt) \|^2 \, \nu_\alpha(d\omega) \\
	& \quad - \int_{\Omega_d} \| b_{\alpha\gamma,t}(\Xit,X_{*,t}(\omega)) \|^2 \, \nu_\gamma(d\omega) + \int_{\Omega_d^2} \| b_{\alpha\gamma,t}(X_{*,t}(\omega),X_{*,t}(\omega')) \|^2 \, \nu_\alpha(d\omega) \nu_\gamma(d\omega') \Big) \, dt
\end{align*}
converges to $0$ in $L^2(\Omega_d^N, \Pmb^N)$ as $N \to \infty$, and
\begin{align*}
	\Tmc^N_{4,2} & = \sum_{\alpha,\gamma=1}^K \frac{1}{N_\gamma^2} \sum_{(i,k) \in \Smc_{\alpha\gamma}} \int_0^T \Big( \int_{\Omega_d} \| b_{\alpha\gamma,t}(X_{*,t}(\omega),\Xkt) \|^2 \, \nu_\alpha(d\omega) \\
	& \quad + \int_{\Omega_d} \| b_{\alpha\gamma,t}(\Xit,X_{*,t}(\omega)) \|^2 \, \nu_\gamma(d\omega) - \int_{\Omega_d^2} \| b_{\alpha\gamma,t}(X_{*,t}(\omega),X_{*,t}(\omega')) \|^2 \, \nu_\alpha(d\omega) \nu_\gamma(d\omega') \Big) \, dt \\
	& \Rightarrow \sum_{\alpha,\gamma=1}^K \frac{\lambda_\alpha}{\lambda_\gamma} \int_0^T \int_{\Omega_d \times \Omega_d} \| b_{\alpha\gamma,t}(X_{*,t}(\omega),X_{*,t}(\omega')) \|^2 \, \nu_\alpha(d\omega) \nu_\gamma(d\omega') \, dt = \textnormal{Trace}(AA^*)
\end{align*}
as $N \to \infty$, by the law of large numbers and Lemma~\ref{lemchap2multitype:trace}.
So $\Tmc^N_4 \Rightarrow \textnormal{Trace}(AA^*)$ as $N \to \infty$.
Finally consider $\Tmc^N_5$. Let $\Smc_{\alpha\beta\gamma} = \{(i,j,k) \in \Nalpha \times \Nbeta \times \Ngamma : i,j,k \text{ distinct}\}$ for $\alpha, \beta, \gamma \in \Kbd$.
Recalling the definition of $b_{\alpha\beta\gamma,t}$ in~\eqref{eqchap2multitype:b alpha beta gamma t}, we have
\begin{equation*}
	\Tmc^N_5 = \sum_{\alpha,\beta,\gamma=1}^{K} \frac{1}{N_\beta N_\gamma} \sum_{(i,j,k) \in \Smc_{\alpha\beta\gamma}} \int_0^T b_{\alpha\beta,t}(\Xit,\Xjt) \cdot b_{\alpha\gamma,t}(\Xit,\Xkt) \, dt = \sum_{\alpha,\beta,\gamma=1}^{K} (\Tmc^N_{\alpha\beta\gamma,1} + \Tmc^N_{\alpha\beta\gamma,2}),
\end{equation*}
where
\begin{equation*}
	\Tmc^N_{\alpha\beta\gamma,1} = \frac{1}{N_\beta N_\gamma} \sum_{(i,j,k) \in \Smc_{\alpha\beta\gamma}} \int_0^T \Big( b_{\alpha\beta,t}(\Xit,\Xjt) \cdot b_{\alpha\gamma,t}(\Xit,\Xkt) -  b_{\alpha\beta\gamma,t}(\Xjt,\Xkt) \Big) \, dt 
\end{equation*}
converges to $0$ in $L^2(\Omega_d^N, \Pmb^N)$ as $N \to \infty$,
and
\begin{equation*}
	\Tmc^N_{\alpha\beta\gamma,2} = \frac{1}{N_\beta N_\gamma} \sum_{(i,j,k) \in \Smc_{\alpha\beta\gamma}} \int_0^T b_{\alpha\beta\gamma,t}(\Xjt,\Xkt) \, dt.
\end{equation*}
Recall the definition of $l_{\alpha,\beta\gamma}$ and $\lhat_{\alpha,\beta\gamma}$ in~\eqref{eqchap2multitype:l alpha beta gamma} and~\eqref{eqchap2multitype:lhat alpha beta gamma} respectively.
By Lemma~\ref{lemchap2multitype:unbalanced populations}, we have the following convergence in law,
\begin{align*}
	\lim_{N \to \infty} \sum_{\alpha,\beta,\gamma=1}^{K} \Tmc^N_{\alpha\beta\gamma,2} & = \lim_{N \to \infty} \sum_{\alpha,\beta,\gamma=1}^{K} \frac{N_\alpha}{N_\beta N_\gamma} \sum_{(j,k) \in \Smc_{\beta\gamma}} \int_0^T b_{\alpha\beta\gamma,t}(\Xjt,\Xkt) \, dt \\
	& = \lim_{N \to \infty} \sum_{\alpha,\beta,\gamma=1}^{K} \frac{1}{\sqrt{N_\beta N_\gamma}} \sum_{(j,k) \in \Smc_{\beta\gamma}} l_{\alpha,\beta\gamma}(V^j,V^k) = \sum_{\alpha,\beta,\gamma=1}^{K} I_2(\lhat^{sym}_{\alpha,\beta\gamma}),
\end{align*}
where $\lhat^{sym}_{\alpha,\beta\gamma}$ is defined as in~\eqref{eqchap2multitype:psi hat sym} with $\psihatbd$ replaced by $\lhat_{\alpha,\beta\gamma}$.
So $\Tmc^N_5 \Rightarrow \sum_{\alpha,\beta,\gamma=1}^{K} I_2(\lhat^{sym}_{\alpha,\beta\gamma})$ as $N \to \infty$.
Define  $\lhat$ from $\Omega_d^K \times \Omega_d^K$ to $\Rmb$ ($\nuhat \otimes \nuhat$ a.s.) as
\begin{equation*}
	\lhat(\omega,\omega') = \sum_{\alpha=1}^{K} \sum_{\beta=1}^{K} \sum_{\gamma=1}^{K} \lhat_{\alpha,\beta\gamma}(\omega,\omega'), \quad (\omega, \omega') \in \Omega_d^K \times \Omega_d^K.
\end{equation*}
Noting that for all $(\omega, \omega') \in \Omega_d^K \times \Omega_d^K$, $\lhat_{\alpha,\beta\gamma}(\omega',\omega) = \lhat_{\alpha,\gamma\beta}(\omega,\omega')$, we have $\sum_{\alpha,\beta,\gamma=1}^{K} \lhat^{sym}_{\alpha,\beta\gamma} = \lhat$.
Combining the above observations we have as $N \to \infty$, 
\begin{equation*}
	J^{N,2}(T) \Rightarrow \textnormal{Trace}(AA^*) + \sum_{\alpha,\beta,\gamma=1}^{K} I_2(\lhat^{sym}_{\alpha,\beta\gamma}) = \textnormal{Trace}(AA^*) + I_2(\lhat),
\end{equation*}
In fact by Lemma~\ref{lemchap2multitype:unbalanced populations} we have as $N \to \infty$, under $\Pmb^N$,
\begin{equation} \label{eqchap2multitype:joint convergence of JN1 and JN2}
	(J^{N,1}(T), J^{N,2}(T)) \Rightarrow (I_2(\hhat^{sym}), \textnormal{Trace}(AA^*) + I_2(\lhat)).
\end{equation}

Recall the function $f \in L^2(\Omega_d^K \times \Omega_d^K, \nuhat \otimes \nuhat)$ defined in~\eqref{eqchap2multitype:f}.
It follows from~\eqref{eqchap2multitype:relation between lhat alpha beta gamma and hhat} that
\begin{equation*}
	\lhat(\omega,\omega') = \int_{\Omega_d^K} \hhat(\omega'',\omega) \hhat(\omega'',\omega') \, \nuhat(d\omega''),
\end{equation*}
which implies $f = 2\hhat^{sym} - \lhat$.
From~\eqref{eqchap2multitype:joint convergence of JN1 and JN2} we get as $N \to \infty$,
\begin{equation} \label{eqchap2multitype:convergence of JN}
	J^N(T) \Rightarrow I_2(\hhat^{sym}) - \frac{1}{2} \big( \textnormal{Trace}(AA^*) + I_2(\lhat) \big) = \frac{1}{2} \big( I_2(f) - \textnormal{Trace}(AA^*) \big) = J,
\end{equation}
where $J$ was introduced in~\eqref{eqchap2multitype:J}.
In fact from Lemma~\ref{lemchap2multitype:unbalanced populations} it follows that with $\xitil^N_\alpha$ as in~\eqref{eqchap2multitype:xi N til}, and $\phi^{(\alpha)}$ introduced below~\eqref{eqchap2multitype:xi N til}, as $N \to \infty$,
\begin{equation} \label{eqchap2multitype:convergence of xi tilde and JN}
	(\sum_{\alpha \in \Kbd} \xitil^N_{\alpha}(\phi^{(\alpha)}), J^N(T)) \Rightarrow (\sum_{\alpha \in \Kbd} I_1(\phihatbd^{(\alpha)}_{\alpha}), J).
\end{equation}
We will now use the above convergence result to complete the proof of Theorem~\ref{thmchap2multitype:CLT}.


\subsubsection{Completing the proof of Theorem~\ref{thmchap2multitype:CLT}} \label{secchap2multitype:completing the proof of CLT}

It follows from Lemma~$1.2$ of~\cite{ShigaTanaka1985} (cf.\ Lemma~\ref{lemappendixchap2multitype:Shiga and Tanaka} in Appendix~\ref{Appendixchap2multitype:Restating}) and Lemma~\ref{lemchap2multitype:trace} that $\Emb_{\tilde{\Pmb}} \big( \exp (J) \big) = 1$, where $\tilde{\Pmb}$ is as introduced above~\eqref{eqchap2multitype:f}.
Along with~\eqref{eqchap2multitype:convergence of JN} and the fact that $\Emb_{\Pmb^N} \big( \exp (J^N(T)) \big) = 1$, we have from Scheff\'e's theorem that $\exp \big( J^N(T) \big)$ is uniformly integrable.
Since $\big| \exp \big( i \sum_{\alpha \in \Kbd} \xitil^N_{\alpha}(\phi^{(\alpha)}) \big) \big| = 1$,
\begin{equation*}
	\exp \Big( i \sum_{\alpha \in \Kbd} \xitil^N_{\alpha}(\phi^{(\alpha)}) + J^N(T) \Big)
\end{equation*}
is also uniformly integrable. 
From~\eqref{eqchap2multitype:convergence of xi tilde and JN}, we have as $N \to \infty$,
\begin{equation*}
	\exp \Big( i \sum_{\alpha \in \Kbd} \xitil^N_{\alpha}(\phi^{(\alpha)}) + J^N(T) \Big) \Rightarrow \exp \Big( i \sum_{\alpha \in \Kbd} I_1(\phihatbd^{(\alpha)}_{\alpha}) + J \Big).
\end{equation*}
Hence, using uniform integrability,
\begin{align*} \label{eqchap2multitype:CLT proof final part}
	 \lim_{N \to \infty} \Emb_{\Pmb^N} \Big( \exp \big( i \sum_{\alpha \in \Kbd} \xitil^N_{\alpha}(\phi^{(\alpha)}) + J^N(T) \big) \Big) 
	&  = \Emb_{\tilde{\Pmb}} \Big[ \exp \big( i \sum_{\alpha \in \Kbd} I_1(\phihatbd^{(\alpha)}_{\alpha}) + J \big) \Big] \\
	&  = \exp \big( -\frac{1}{2} \| (I-A)^{-1} (\sum_{\alpha \in \Kbd} \phihatbd^{(\alpha)}_{\alpha}) \|^2_{L^2(\Omega_d^K,\nuhat)} \big),
\end{align*}
where the last equality is a consequence of Lemma~$1.3$ of~\cite{ShigaTanaka1985} (cf.\ Lemma~\ref{lemappendixchap2multitype:Shiga and Tanaka} in Appendix~\ref{Appendixchap2multitype:Restating}) and Lemma~\ref{lemchap2multitype:trace}.
Thus we have proved~\eqref{eqchap2multitype:convergence of characteristic function}, which completes the proof of Theorem~\ref{thmchap2multitype:CLT}. \qed


\subsection{Proof of Theorem~\ref{thmchap2multitype:general CLT}} \label{secchap2multitype:proof of general CLT}

It was argued in Section~\ref{secchap2multitype:completing the proof of CLT} that $\Emb_{\tilde{\Pmb}} \big( \exp (J) \big) = 1$.
Consider now the second statement in the theorem.
Recall the definition of $\xi^N(\phi)$ in~\eqref{eqchap2multitype:xi N}.
For $\phi \in \Amc^K$, let
\begin{equation*}
	\xitil^N(\phi) = \frac{1}{\sqrt{N_1 \dotsm N_K}} \sum_{i_1 \in \boldsymbol{N_1}} \dotsb \sum_{i_K \in \boldsymbol{N_K}} \phi(X^{i_1},\dotsc,X^{i_K}).
\end{equation*}
Then $\Pbd \circ (\xi^N(\phi))^{-1} = \Qmb^N \circ (\xitil^N(\phi))^{-1}$.
Using Lemma~\ref{lemchap2multitype:unbalanced populations} as for the proof of~\eqref{eqchap2multitype:convergence of xi tilde and JN}, we see that under $\Pmb^N$, with $\phibd = \phi(X_*(\cdot),\dotsc,X_*(\cdot))$, as $N \to \infty$,
\begin{equation*}
	\exp \Big( i \xitil^N(\phi) +J^N(T) \Big) \Rightarrow \exp \Big( i I_K(\phihatbd^{sym}) + J \Big).
\end{equation*}
As before, $\exp (i \xitil^N(\phi) +J^N(T))$ is uniformly integrable.
Hence as $N \to \infty$,
\begin{align*}
	\Emb_\Pbd \exp \Big( i \xi^N(\phi) \Big) & = \Emb_{\Qmb^N} \exp \Big( i \xitil^N(\phi) \Big) = \Emb_{\Pmb^N} \exp \Big( i \xitil^N(\phi) +J^N(T) \Big) \\
	& \to \Emb_{\tilde{\Pmb}} \exp \Big( i I_K(\phihatbd^{sym}) + J \Big) = \Emb_{\tilde{\Qmb}} \exp \Big( i I_K(\phihatbd^{sym}) \Big),
\end{align*}
which finishes the proof. \qed


\section{Conditions and notations for CLT in the common factor case} \label{secchap3commonfactor:preparations} 

In this section we will present the main condition that is assumed in Theorem~\ref{thmchap3commonfactor:CLT} and also introduce some functions and operators needed in its proof.


\subsection{Assumptions for the central limit theorem} \label{secchap3commonfactor:assumptions for CLT}

Consider the systems of equations given by~\eqref{eqchap3commonfactor:ZiNt}-\eqref{eqchap3commonfactor:mu N t} and~\eqref{eqchap3commonfactor:Xit}-\eqref{eqchap3commonfactor:mu t}.
Since, unlike the model considered in Section~\ref{secchap2multitype:model}, here the dependence of the coefficients on the empirical measure is nonlinear, we will need to impose suitable smoothness conditions.
These smoothness conditions can be formulated as follows.

Denote by $\Jmc$ [resp.\ $\Jmcbar$] the collection of all real measurable functions $f$ on $\Rmb^{d+m+d}$ [resp.\ $\Rmb^{m+d}$] that are bounded by $1$.
Denote by $\Thetabar$ the class of all $g : \Rmb^m \times [\Pmc(\Rmb^d)]^K \to \Rmb^m$ such that there exist $c_g \in (0,\infty)$; a finite subset $\Jmcbar_g$ of $\Jmcbar$; continuous and bounded functions $g_{(1)}, g_{(2)}$ from $\Rmb^m \times [\Pmc(\Rmb^d)]^K$ to $\Rmb^{m \times m}$ and from $\Rmb^m \times [\Pmc(\Rmb^d)]^K \times \Rmb^d$ to $\Rmb^m$ respectively; and $\theta_g : \Rmb^m \times [\Pmc(\Rmb^d)]^K \times \Rmb^m \times [\Pmc(\Rmb^d)]^K \to \Rmb^m$ such that for all $\rbd = (y, \nubd), \rbd' = (y', \nubd') \in \Rmb^m \times [\Pmc(\Rmb^d)]^K$
\begin{equation*}
	g(\rbd') - g(\rbd) = g_{(1)}(\rbd) (y' - y) + \sum_{\gamma=1}^K \langle g_{(2),\gamma}(\rbd,\cdot), (\nu_\gamma' - \nu_\gamma) \rangle + \theta_g(\rbd,\rbd'),
\end{equation*}
and
\begin{equation*}
	\| g(\rbd') - g(\rbd) \| \le c_g \big( \| y' - y \| + \sum_{\gamma=1}^K \max_{f \in \Jmcbar_g} | \langle f(y,\cdot) , (\nu_\gamma'-\nu_\gamma) \rangle | \big),
\end{equation*}
where
\begin{equation*} 
	\| \theta_g(\rbd,\rbd') \| \le c_g \big( \| y' - y \|^2 + \sum_{\gamma=1}^K \max_{f \in \Jmcbar_g} | \langle f(y,\cdot) , (\nu_\gamma'-\nu_\gamma) \rangle |^2 \big).
\end{equation*}
Write $\sigmabar = (\sigmabar_1,\dotsc,\sigmabar_m)$ where $\sigmabar_k$ is an $\Rmb^m$-valued function for $k=1,\dotsc,m$. 
The following will be the key assumptions needed in Theorem~\ref{thmchap3commonfactor:CLT}.

\begin{condition} \label{condchap3commonfactor:cond2}
	$\bbar$, $\sigmabar_k$, $k=1,\dotsc,m$ are in class $\Thetabar$.
\end{condition}

We impose analogous smoothness conditions on $b_\alpha$ as follows.

\begin{condition} \label{condchap3commonfactor:cond3}
	There exist $c_b \in (0,\infty)$; a finite subset $\Jmc_b$ of $\Jmc$; continuous and bounded functions $b_{\alpha,(1)}, b_{\alpha\gamma,(2)}$ from $\Rmb^{d+m} \times [\Pmc(\Rmb^d)]^K$ to $\Rmb^{d \times m}$ and from $\Rmb^{d+m} \times [\Pmc(\Rmb^d)]^K \times \Rmb^d$ to $\Rmb^d$ respectively; and $\theta_{b_\alpha} : \Rmb^{d+m} \times [\Pmc(\Rmb^d)]^K \times \Rmb^m \times [\Pmc(\Rmb^d)]^K \to \Rmb^d$ such that for all $\alpha \in \Kbd$, $x \in \Rmb^d$, $\rbd = (y,\nubd)$ and $\rbd' = (y',\nubd') \in \Rmb^m \otimes [\Pmc(\Rmb^d)]^K$
		\begin{equation*}
			b_\alpha(x,\rbd') - b_\alpha(x,\rbd) = b_{\alpha,(1)}(x,\rbd) (y' - y) + \sum_{\gamma=1}^K \langle b_{\alpha\gamma,(2)}(x,\rbd,\cdot), (\nu_\gamma' - \nu_\gamma) \rangle + \theta_{b_\alpha}(x,\rbd,\rbd')
		\end{equation*}
	and
		\begin{equation} \label{eqchap3commonfactor:assumptions of theta b}
			\| \theta_{b_\alpha}(x,\rbd,\rbd') \| \le c_b \big( \| y' - y \|^2 + \sum_{\gamma=1}^K \max_{f \in \Jmc_b} | \langle f(x,y,\cdot) , (\nu_\gamma'-\nu_\gamma) \rangle |^2 \big).
		\end{equation}
\end{condition}

The above conditions on $b_\alpha, \bbar, \sigmabar$ are satisfied quite generally.
For example, let $d = m = 1$ and
\begin{gather*}
	b_\alpha(x,y,\nubd) = b_{\alpha,0}(x, y, \langle f_{1},\nubd \rangle, \dotsc, \langle f_{k},\nubd \rangle), \\
	\bbar(y,\nubd) = \bbar_0(y, \langle f_1,\nubd \rangle, \dotsc, \langle f_k,\nubd \rangle), \\
	\sigmabar(y,\nubd) = \sigmabar_0(y, \langle f_1,\nubd \rangle, \dotsc, \langle f_k,\nubd \rangle),
\end{gather*}
where $b_{\alpha,0}$, $\bbar_0$ and $\sigmabar_0$ are bounded twice continuously differentiable functions with bounded derivatives; and $f_i$ are bounded Lipschitz functions. 
Then above conditions hold.
We refer the reader to~\cite{BudhirajaSaha2014} for details and other examples.


\subsection{Canonical processes} \label{secchap3commonfactor:canonical processes}

Recall the canonical space $\Omegabar^N = \Omega_m \times \Omega_d^N$ defined in Section~\ref{secchap3commonfactor:CLT}.
We introduce the following canonical stochastic processes. 

For $\omega = (\omegabar, \omega_1, \omega_2, \dotsc, \omega_N) \in \Omegabar^N$, let $V^i(\omega) = \omega_i, i \in \Nbd$ and $\Vbar(\omega) = \omegabar$.
As before, abusing notation, we write
\begin{equation*}
	V^i = (W^i, X^i), i \in \Nbd, \quad \Vbar = (\Wbar, Y).
\end{equation*}
Also with $\Pi^\alpha$ as in Remark~\ref{rmkchap3commonfactor:rmk1} let $\mu^\alpha = \Pi^\alpha(Y_0,\Wbar)$ for $\alpha \in \Kbd$ and $\mubd = (\mu^1,\dotsc,\mu^K)$. Recall $\Pmbbar^N \in \mathcal{P}(\Omegabar^N)$ introduced in~\eqref{eqchap3commonfactor:P N}.
With these definitions, under $\Pmbbar^N$,~\eqref{eqchap3commonfactor:Xit}-\eqref{eqchap3commonfactor:mu t} are satisfied a.s.\ for $i \in \Nbd$, where $\mu^\alpha_t$ is the marginal of $\mu^\alpha$ at time instant $t$; also with $\Gmc_t = \sigma \{ Y_0, \Wbar_s, s \le t\}$ and $\Gmc = \Gmc_T$, $\mu^\alpha_t = \Lmc(\Xit | \Gmc) = \Lmc(\Xit | \Gmc_t)$, $t \in [0,T]$, $i \in \Nalpha$, $\alpha \in \Kbd$, $\Pmbbar^N$ a.s.; and $Y$ is $\{ \Gmc_t \}$ adapted.
Recall the process $V_* = (W_*, X_*)$ defined on $\Omega_d$ in Section~\ref{secchap2multitype:canonical processes}.
Also define $\Vbar_* = (\Wbar_*, Y_*)$ on $\Omega_m$ as follows:
For $\omegabar = (\omegabar_1, \omegabar_2) \in \Omega_m$,
\begin{equation*}
	\Vbar_*(\omegabar) = (\Wbar_*(\omegabar), Y_*(\omegabar)) = (\omegabar_1, \omegabar_2).
\end{equation*}
Let $\mu^\alpha_* : \Omega_m \to \Pmc(\Cmc_d)$ be defined as $\mu^\alpha_*(\omegabar) = \Pi^\alpha(Y_{*,0}(\omegabar), \Wbar_*(\omegabar))$ for $\omegabar \in \Omega_m$ and $\alpha \in \Kbd$.
Note that $t \mapsto \mu^\alpha_{*,t}$ is a continuous map, i.e.\ an element of $\Cmb_{\Pmc(\Rmb^d)}[0,T]$, which once more  we will denote as $\mu^\alpha_*$.
Finally let $\mubd_* = (\mu^1_*,\dotsc,\mu^K_*)$ and $\Dbd_* = (\Wbar_*, Y_*, \mubd_*)$.


\subsection{Some random integral operators} \label{secchap3commonfactor:some random integral operators}

We now introduce some random integral operators, similar to the integral operators introduced in Section~\ref{secchap2multitype:some integral operators}, which will be needed to formulate the CLT.
Randomness of the integral operators is due to the fact that the kernel function of these operators will depend on the common factor.
Recall $b_{\alpha\gamma,(2)}$ in Condition~\ref{condchap3commonfactor:cond3}.
Define function $b^c_{\alpha\gamma,(2)}$ from $\Rmb^{d+m} \times [\Pmc(\Rmb^d)]^K \times \Rmb^d$ to $\Rmb^d$ as follows: 
For $(x,\rbd,\xtil) \in \Rmb^{d+m} \times [\Pmc(\Rmb^d)]^K \times \Rmb^d$ with $\rbd = (y,\nubd)$,
\begin{equation*}
	b^c_{\alpha\gamma,(2)}(x,\rbd,\xtil) = b_{\alpha\gamma,(2)}(x,\rbd,\xtil) - \langle b_{\alpha\gamma,(2)}(x,\rbd,\cdot), \nu_\gamma \rangle.
\end{equation*}
Recall $\bbar_{(2),\gamma}$ and $\sigmabar_{k,(2),\gamma}$ in Condition~\ref{condchap3commonfactor:cond2}.
Similarly, define functions $\bbar^c_{(2),\gamma}$ and $\sigmabar^c_{k,(2),\gamma}$ from $\Rmb^m \times [\Pmc(\Rmb^d)]^K \times \Rmb^d$ to $\Rmb^m$ as follows:
For $(\rbd,\xtil) \in \Rmb^m \times [\Pmc(\Rmb^d)]^K \times \Rmb^d$ with $\rbd = (y,\nubd)$,
\begin{align}
	\bbar^c_{(2),\gamma}(\rbd,\xtil) & = \bbar_{(2),\gamma}(\rbd,\xtil) - \langle \bbar_{(2),\gamma}(\rbd,\cdot), \nu_\gamma \rangle, \label{eqchap3commonfactor:bbar c}\\
	\sigmabar^c_{k,(2),\gamma}(\rbd,\xtil) & = \sigmabar_{k,(2),\gamma}(\rbd,\xtil) - \langle \sigmabar_{k,(2),\gamma}(\rbd,\cdot), \nu_\gamma \rangle. \label{eqchap3commonfactor:sigmabar c}
\end{align}
We now introduce another function given on a suitable path space that will be used to define the kernels in our integral operators.
One ingredient in the definition of this function requires additional notational preparation and its precise definition is postponed to Section~\ref{secchap3commonfactor:proofs}.
Define for $t \in [0,T]$ and $\alpha,\gamma \in \Kbd$, the function $\fbd_{\alpha\gamma,t}$ from $\Rmb^d \times \Cmb_{\Rmb^{d+2m} \times [\Pmc(\Rmb^d)]^K}[0,t]$ to $\Rmb^d$ as follows:
For $(x^{(1)},x^{(2)}_{[0,t]},\dbd_{[0,t]}) \in \Rmb^d \times \Cmb_{\Rmb^{d+2m} \times [\Pmc(\Rmb^d)]^K}[0,t]$ with $\dbd = (w,\rbd) = (w,y,\nubd)$,
\begin{equation} \label{eqchap3commonfactor:f alpha gamma t}
	\fbd_{\alpha\gamma,t}(x^{(1)},x^{(2)}_{[0,t]},\dbd_{[0,t]}) = b_{\alpha\gamma,(2)}^c(x^{(1)},\rbd_t,x^{(2)}_t) + b_{\alpha,(1)}(x^{(1)},\rbd_t) \sbd_{\gamma,t}(x^{(2)}_{[0,t]},\dbd_{[0,t]}),
\end{equation}
where the function $\sbd_{\gamma,t}$ from $\Cmb_{\Rmb^{d+2m} \times [\Pmc(\Rmb^d)]^K }[0,t]$ to $\Rmb^m$ will be introduced in Lemma~\ref{lemchap3commonfactor:difference of Y}.

Recall the transition probability kernel $\rho_\alpha$ introduced below~\eqref{eqchap3commonfactor:P N}. 
Fix $\omegabar \in \Omega_m$ and consider the Hilbert space $\Hmc_{\omegabar} = L^2(\Omega_d^K,\rhohat(\omegabar,\cdot))$, where $\rhohat(\omegabar,d\omega_1,\dotsc,d\omega_K) = \rho_1(\omegabar,d\omega_1) \otimes \dotsb \otimes \rho_K(\omegabar,d\omega_K)$.
Define for $\bar P$ a.e.\ $\bar \omega$, $h_{\omegabar}^{\alpha\gamma} \in L^2(\Omega_d \times \Omega_d, \rho_\alpha(\omegabar,\cdot) \times \rho_\gamma(\omegabar,\cdot))$ as
\begin{equation*}
	h_{\omegabar}^{\alpha\gamma}(\omega,\omega') = \sqrt{\frac{\lambda_\alpha}{\lambda_\gamma}} \int_0^T \fbd_{\alpha\gamma,t}(X_{*,t}(\omega),X_{*,[0,t]}(\omega'),\Dbd_{*,[0,t]}(\omegabar)) \cdot dW_{*,t}(\omega), \quad (\omega,\omega') \in \Omega_d \times \Omega_d.
\end{equation*}
Let $\hhat_{\omegabar}^{\alpha\gamma} \in L^2(\Omega_d^K \times \Omega_d^K, \rhohat(\omegabar,\cdot) \otimes \rhohat(\omegabar,\cdot))$ be the lifted version of $h_{\omegabar}^{\alpha\gamma}$, namely $\hhat_{\omegabar}^{\alpha\gamma}(\omega,\omega') = h_{\omegabar}^{\alpha\gamma}(\omega_\alpha,\omega'_\gamma)$ for $\omega = (\omega_1,\dotsc,\omega_K) \in \Omega_d^K$ and $\omega' = (\omega_1',\dotsc,\omega_K') \in \Omega_d^K$.
Define the integral operator $A^{\alpha\gamma}_{\omegabar}$ on $\Hmc_\omegabar$ as follows.
For $g \in \Hmc_{\omegabar}$ and $\omega \in \Omega_d^K$,
\begin{equation} \label{eqchap3commonfactor:A alpha gamma}
	A^{\alpha\gamma}_{\omegabar} g(\omega) = \int_{\Omega_d^K} g(\omega') \hhat_{\omegabar}^{\alpha\gamma}(\omega',\omega) \, \rhohat(\omegabar,d\omega').  
\end{equation}
Let $A_{\omegabar} = \sum_{\alpha,\gamma=1}^{K} A^{\alpha\gamma}_{\omegabar}$.
Then this is the integral operator on $\Hmc_\omegabar$ associated with the kernel $\hhat_\omegabar = \sum_{\alpha,\gamma=1}^K \hhat_{\omegabar}^{\alpha\gamma} \in L^2(\Omega_d^K \times \Omega_d^K, \rhohat(\omegabar,\cdot) \otimes \rhohat(\omegabar,\cdot))$. 
Denote by $I$ the identity operator on $\Hmc_{\omegabar}$.
The following lemma is proved similarly as Lemma~\ref{lemchap2multitype:trace}.
Proof is omitted.
\begin{lemma} \label{lemchap3commonfactor:Trace}
	For $\Pbar$ a.e.\ $\omegabar$, $(a)$ \textnormal{Trace}$(A_{\omegabar}^n) = 0$ for all $n \ge 2$, and $(b)$ $I - A_{\omegabar}$ is invertible.
\end{lemma}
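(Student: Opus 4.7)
The plan is to execute the proof of Lemma~\ref{lemchap2multitype:trace} conditionally on the common factor $\omegabar$. The structural input that makes this possible is the disintegration $\Pmbbar^N(d\omegabar\, d\omega_1 \dotsm d\omega_N) = \prod_{i=1}^N \rho_{\pbd(i)}(\omegabar, d\omega_i)\, \Pbar(d\omegabar)$ together with the independence of $\{W^i\}$ and $\Vbar = (\Wbar,Y)$ in the construction of~\eqref{eqchap3commonfactor:Xit}-\eqref{eqchap3commonfactor:mu t}. These jointly imply that, for $\Pbar$-a.e.\ $\omegabar$, under $\rho_\alpha(\omegabar,\cdot)$ the canonical process $W_*$ is a standard $d$-dimensional Brownian motion with respect to its own filtration, while $X_*$ is an adapted functional of $W_*$ determined by~\eqref{eqchap3commonfactor:Xit} with the common factor frozen to $Y_*(\omegabar)$. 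In particular the It\^o integrals in $h_\omegabar^{\alpha\gamma}$ are genuine stochastic integrals against Brownian motion under $\rho_\alpha(\omegabar,\cdot)$, so the algebraic and combinatorial identities of Section~\ref{secchap2multitype:proof of lemma of trace} transfer with $\nuhat$ and $\hhat$ replaced by the random objects $\rhohat(\omegabar,\cdot)$ and $\hhat_\omegabar$.

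For part~(a), I would expand
\begin{equation*}
	\textnormal{Trace}(A_\omegabar^n) = \int_{(\Omega_d^K)^n} \hhat_\omegabar(\omega^{(1)},\omega^{(2)}) \hhat_\omegabar(\omega^{(2)},\omega^{(3)}) \dotsm \hhat_\omegabar(\omega^{(n)},\omega^{(1)}) \prod_{j=1}^n \rhohat(\omegabar, d\omega^{(j)}),
\end{equation*}
and decompose $\hhat_\omegabar = \sum_{\alpha,\gamma\in\Kbd} \hhat_\omegabar^{\alpha\gamma}$. Since $\hhat_\omegabar^{\alpha\gamma}(\omega, \omega') = h_\omegabar^{\alpha\gamma}(\omega_\alpha, \omega'_\gamma)$ depends on $\omega$ only through the $dW_*$-integrator and the $X_*$-trajectory of its $\alpha$-th coordinate, each variable $\omega^{(j)}$ in the resulting product of $n$ It\^o integrals enters at most two factors of the cycle. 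For $n \ge 2$ one can always isolate a coordinate $\omega^{(j)}_\beta$ whose Brownian integrator does not appear in any other factor, and integration against its marginal of $\rho_\beta(\omegabar,\cdot)$ kills the product by the martingale property of the It\^o integral. This is precisely the combinatorial cancellation of Lemma~2.7 of~\cite{ShigaTanaka1985}, and applying it term by term gives $\textnormal{Trace}(A_\omegabar^n) = 0$ for $\Pbar$-a.e.\ $\omegabar$ and all $n \ge 2$.

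For part~(b), the Hilbert--Schmidt norm of $A_\omegabar$ is, as in part~(a) of Lemma~\ref{lemchap2multitype:trace},
\begin{equation*}
	\textnormal{Trace}(A_\omegabar A_\omegabar^*) = \sum_{\alpha,\gamma=1}^K \frac{\lambda_\alpha}{\lambda_\gamma} \int_0^T \int_{\Omega_d^2} \| \fbd_{\alpha\gamma,t}(X_{*,t}(\omega), X_{*,[0,t]}(\omega'), \Dbd_{*,[0,t]}(\omegabar)) \|^2 \, \rho_\alpha(\omegabar, d\omega) \rho_\gamma(\omegabar, d\omega')\, dt,
\end{equation*}
which is finite $\Pbar$-a.s.\ by the boundedness of $b_{\alpha\gamma,(2)}^c$ and $b_{\alpha,(1)}$ in Condition~\ref{condchap3commonfactor:cond3} and the integrability of $\sbd_{\gamma,t}$ supplied by the forthcoming Lemma~\ref{lemchap3commonfactor:difference of Y}. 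Combining this Hilbert--Schmidt bound with part~(a) and the abstract criterion of Lemma~1.3 of~\cite{ShigaTanaka1985} yields invertibility of $I - A_\omegabar$ for $\Pbar$-a.e.\ $\omegabar$. The main technical point to check is the $\Pbar$-a.s.\ validity and joint measurability in $\omegabar$ of the Brownian motion property of $W_*$ under $\rho_\alpha(\omegabar,\cdot)$ and of the associated It\^o-isometry computations; modulo this verification, the proof is a direct randomized-kernel transcription of the argument for Lemma~\ref{lemchap2multitype:trace}.
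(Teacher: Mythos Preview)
Your proposal is correct and matches the paper's approach: the paper itself omits the proof, stating only that it ``is proved similarly as Lemma~\ref{lemchap2multitype:trace}.'' Your transcription of that argument to the conditional setting, invoking the same Shiga--Tanaka lemmas with $\nuhat$ and $\hhat$ replaced by $\rhohat(\omegabar,\cdot)$ and $\hhat_\omegabar$, is exactly what is intended, and your explicit flagging of the need to verify the Brownian property of $W_*$ under $\rho_\alpha(\omegabar,\cdot)$ is a reasonable point of care.
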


Recall the collection $\Amcbar_\alpha, \alpha \in \Kbd$ introduced in Section~\ref{secchap3commonfactor:CLT}.
For $\alpha \in \Kbd$, $\phi_\alpha \in \Amcbar_\alpha$ and $\omegabar \in \Omega_m$, let
\begin{gather}
	\Phi^\alpha_\omegabar(\omega) = \phi_\alpha(X_*(\omega)) - m_{\phi_\alpha}^\alpha(\omegabar), \quad \omega \in \Omega_d \notag \\
	\Sigma^{\alpha\gamma}_\omegabar = \langle (I - A_\omegabar)^{-1} \hat{\Phi}^\alpha_\omegabar, (I - A_\omegabar)^{-1} \hat{\Phi}^\gamma_\omegabar \rangle_{L^2(\Omega_d^K,\rhohat(\omegabar,\cdot))}, \label{eqchap3commonfactor:Sigma omegabar}
\end{gather} 
where $m_{\phi_\alpha}^\alpha$ is as in~\eqref{eqchap3commonfactor:m alpha phi alpha} and $\hat{\Phi}^\alpha_\omegabar$ is the lifted function defined as in~\eqref{eqchap2multitype:phihat alpha}, namely for $\omega = (\omega_1,\dotsc,\omega_K) \in \Omega_d^K$ and $\alpha \in \Kbd$,
\begin{equation} \label{eqchap3commonfactor:Phi hat alpha}
	\hat{\Phi}^\alpha_\omegabar(\omega) = \Phi^\alpha_\omegabar(\omega_\alpha) = \phi_\alpha(X_*(\omega_\alpha)) - m_{\phi_\alpha}^\alpha(\omegabar).
\end{equation}
The quantities $\Sigma^{\alpha\gamma}_\omegabar$, $\alpha,\gamma \in \Kbd$ were used in Section~\ref{secchap3commonfactor:CLT} to characterize the limit distribution of $(\Vmc^N_1(\phi_1), \dotsc, \Vmc^N_K(\phi_K))$.
In particular, recall that $\pi_\omegabar(\phi_1,\dotsc,\phi_K)$ is the $K$-dimensional multivariate normal distribution with mean $0$ and variance-covariance matrix $\Sigma_\omegabar = (\Sigma^{\alpha\gamma}_\omegabar)_{\alpha,\gamma \in \Kbd}$, and $\pi(\phi_1,\dotsc,\phi_K)$ is the Gaussian mixture defined by~\eqref{eqchap3commonfactor:pi phi 1 phi K}.
Theorem~\ref{thmchap3commonfactor:CLT}, which is proved in Section~\ref{secchap3commonfactor:proofs} below, says that under Conditions~\ref{condchap3commonfactor:cond1},~\ref{condchap3commonfactor:cond2} and~\ref{condchap3commonfactor:cond3}, $(\Vmc^N_1(\phi_1), \dotsc, \Vmc^N_K(\phi_K))$ converges in distribution to $\pi(\phi_1,\dotsc,\phi_K)$, where $\Vmc_{i}^N(\phi_i)$ are as in~\eqref{eqchap3commonfactor:V alpha phi alpha}.


\section{Proofs of Theorem~\ref{thmchap3commonfactor:CLT} and related results} \label{secchap3commonfactor:proofs}

In this section we will present the proof of Theorem~\ref{thmchap3commonfactor:CLT}.
With $\{V^i=(W^i,X^i)\}_{i \in \Nbd}$, $\Vbar=(\Wbar,Y)$ as introduced in Section~\ref{secchap3commonfactor:canonical processes}, define $Y^N$ as the unique solution of the following equation
\begin{equation} \label{eqchap3commonfactor:YNt}
	Y^N_t = Y_0 + \int_0^t \bbar(Y^N_s,\mubd^N_s) \, ds + \int_0^t \sigmabar(Y^N_s,\mubd^N_s) \, d\Wbar_s, 
\end{equation}
where $\mubd^N_t = (\mu^{1,N}_t, \dotsc, \mu^{K,N}_t)$ and $\mu^{\gamma,N}_t = \frac{1}{N_\gamma} \sum_{j \in \Ngamma} \delta_{\Xjt}$ for $\gamma \in \Kbd$.
We begin in Section~\ref{secchap3commonfactor:Grisanov's Change of Measure} by introducing the Girsanov's change of measure that is key to the proof.
The main additional work required for the proof of CLT in the presence of a common factor is in the estimation of the difference between $Y^N$ and $Y$.
This is done in Section~\ref{secchap3commonfactor:study YN - Y}.
These estimates are used in Sections~\ref{secchap3commonfactor:asymptotics of JN1} and~\ref{secchap3commonfactor:asymptotics of JN2} to study the asymptotics of the Radon-Nikodym derivative.
Finally Sections~\ref{secchap3commonfactor:combining contributions from JN1 and JN2} and~\ref{secchap3commonfactor:completing the proof} combine these asymptotic results to complete the proof of Theorem~\ref{thmchap3commonfactor:CLT}.


\subsection{Girsanov's change of measure} \label{secchap3commonfactor:Grisanov's Change of Measure}

Let $\Rbd^N = (Y^N,\mubd^N)$ and $\Rbd = (Y,\mubd)$.
For $t \in [0,T]$, define $\{ J^N(t) \}$ as
\begin{equation} \label{eqchap3commonfactor:JN}
	J^N(t) = J^{N,1}(t) - \frac{1}{2} J^{N,2}(t),
\end{equation}
where
\begin{equation} \label{eqchap3commonfactor:JN1}
	J^{N,1}(t) = \sum_{\alpha=1}^K \sum_{i \in \Nalpha} \int_0^t [b_\alpha(\Xis,\Rbd^N_s) - b_\alpha(\Xis,\Rbd_s)] \cdot d\Wis,
\end{equation}
and
\begin{equation} \label{eqchap3commonfactor:JN2}
	J^{N,2}(t) = \sum_{\alpha=1}^K \sum_{i \in \Nalpha} \int_0^t \| b_\alpha(\Xis,\Rbd^N_s) - b_\alpha(\Xis,\Rbd_s) \|^2 \, ds.
\end{equation}
Letting for $t \in [0,T]$, $\bar{\Fmc}^N_t = \sigma \{ \Vbar(s), V^i(s) : 0 \le s \le t, i \in \Nbd \}$, we see that $\{ \exp (J^N(t)) \}$ is a $\{ \bar{\Fmc}^N_t \}$-martingale under $\Pmbbar^N$.
Define a new probability measure $\Qmbbar^N$ on $\Omegabar^N$ by
\begin{equation} \label{eqchap3commonfactor:QN}
	\frac{d\Qmbbar^N}{d\Pmbbar^N} = \exp (J^N(T)).
\end{equation}

By Girsanov's theorem, $(X^1,\dotsc,X^N,Y^N,\Vbar)$ has the same probability law under $\Qmbbar^N$ as $(Z^{1,N},\dotsc,Z^{N,N},U^N,V^0)$ under $\Pbd$, where $V^0$ is introduced below~\eqref{eqchap3commonfactor:V alpha phi alpha}.
For $\phi_\alpha \in \Amcbar_\alpha$, $\alpha \in \Kbd$, let
\begin{equation} \label{eqchap3commonfactor:Vtil alpha phi alpha}
	\Vmctil^N_\alpha(\phi_\alpha) = \sqrt{N_\alpha} \big( \frac{1}{N_\alpha} \sum_{i \in \Nalpha} \phi_\alpha (X^i) - m^\alpha_{\phi_\alpha} (\Vbar) \big).
\end{equation}
In order to prove the theorem, from the definition of $\Sigma_\omegabar$ in Section~\ref{secchap3commonfactor:some random integral operators} and $\pi(\phi_1,\dotsc,\phi_K)$ in Section~\ref{secchap3commonfactor:CLT}, it suffices to show that
\begin{equation*}
	\lim_{N \to \infty} \Emb_{\Qmbbar^N} \exp \Big( i \sum_{\alpha=1}^K \Vmctil^N_\alpha(\phi_\alpha) \Big) = \int_{\Omega_m} \exp \Big( -\frac{1}{2} \Big\| (I-A_\omegabar)^{-1} \sum_{\alpha = 1}^K \hat{\Phi}^\alpha_\omegabar \Big\|^2_{L^2(\Omega_d^K,\rhohat(\omegabar,\cdot))} \Big) \Pbar(d\omegabar),
\end{equation*}
which from~\eqref{eqchap3commonfactor:QN} is equivalent to showing
\begin{align} \label{eqchap3commonfactor:convergence of characteristic function}
	\begin{aligned}
		& \lim_{N \to \infty} \Emb_{\Pmbbar^N} \exp \Big( i \sum_{\alpha=1}^K \Vmctil^N_\alpha(\phi_\alpha) + J^{N,1}(T) - \frac{1}{2} J^{N,2}(T) \Big) \\
		& \hspace{20 mm} = \int_{\Omega_m} \exp \Big( -\frac{1}{2} \Big\| (I-A_\omegabar)^{-1} \sum_{\alpha = 1}^K \hat{\Phi}^\alpha_\omegabar \Big\|^2_{L^2(\Omega_d^K,\rhohat(\omegabar,\cdot))} \Big) \Pbar(d\omegabar).
	\end{aligned}
\end{align}
The above equality will be established in Section~\ref{secchap3commonfactor:completing the proof}.


\subsection{Studying $Y^N - Y$} \label{secchap3commonfactor:study YN - Y}

The following lemma is an immediate consequence of the fact that for each $\gamma \in \Kbd$, conditionally on $\Gmc$, $X^j$ are i.i.d. for $j \in \Ngamma$.
Proof is omitted.

\begin{lemma} \label{lemchap3commonfactor:order}
	For each $\gamma \in \Kbd$ and $r \in \Nmb$, there exists $\atil_r \in (0,\infty)$ such that for all $N \in \Nmb$
	\begin{equation*}
		\sup_{\| f \|_\infty \le 1} \EmbPbar | \langle f(\cdot), (\mu^\gamma_t - \mu^{\gamma,N}_t) \rangle |^r \le \frac{\atil_r}{N_\gamma^{r/2}}.
	\end{equation*}
\end{lemma}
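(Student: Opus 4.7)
The plan is to condition on $\Gmc$ and reduce the estimate to a standard moment bound for empirical averages of i.i.d.\ bounded centered random variables. By Remark~\ref{rmkchap3commonfactor:rmk1}(ii), applied in the canonical setup of Section~\ref{secchap3commonfactor:canonical processes} under $\Pmbbar^N$, the particles $\{X^j : j \in \Ngamma\}$ are conditionally i.i.d.\ given $\Gmc$, each with conditional law $\mu^\gamma$; in particular, for every fixed $t \in [0,T]$, $\{X^j_t : j \in \Ngamma\}$ are conditionally i.i.d.\ with common conditional marginal $\mu^\gamma_t$.

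Fix $f$ with $\|f\|_\infty \le 1$ and set $Y_j \doteq f(X^j_t) - \langle f, \mu^\gamma_t\rangle$ for $j \in \Ngamma$. Then
\begin{equation*}
\langle f, \mu^{\gamma,N}_t - \mu^\gamma_t \rangle = \frac{1}{N_\gamma} \sum_{j \in \Ngamma} Y_j,
\end{equation*}
where, conditionally on $\Gmc$, the $Y_j$ are i.i.d., satisfy $\EmbPbar[Y_j \mid \Gmc] = 0$ and $|Y_j| \le 2$. My next step is to invoke a standard conditional Marcinkiewicz--Zygmund (or Rosenthal) moment inequality: there exists $C_r \in (0,\infty)$, depending only on $r$, such that
\begin{equation*}
\EmbPbar \Bigl[ \Bigl| \sum_{j \in \Ngamma} Y_j \Bigr|^r \,\Big|\, \Gmc \Bigr] \le C_r\, N_\gamma^{r/2}.
\end{equation*}
(For even $r$ this reduces to a direct combinatorial expansion of the $r$-th power using the conditional independence and boundedness; for odd $r$ one interpolates or uses Jensen's inequality between consecutive even exponents.) Dividing by $N_\gamma^r$ yields $\EmbPbar[|\langle f, \mu^{\gamma,N}_t - \mu^\gamma_t\rangle|^r \mid \Gmc] \le C_r / N_\gamma^{r/2}$, where the bound is deterministic and depends neither on $f$ (as long as $\|f\|_\infty \le 1$) nor on the realization of $\Gmc$.

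Finally, taking $\EmbPbar$ of both sides and then the supremum over $f$ with $\|f\|_\infty \le 1$ gives the claimed inequality with $\atil_r \doteq C_r$. There is no real obstacle here: the only point one must be careful about is to apply the moment inequality \emph{conditionally} and note that the resulting bound is uniform in the conditional law $\mu^\gamma_t(\omegabar,\cdot)$ (since only $\|f\|_\infty$ and the conditional independence enter), so that the outer expectation over $\omegabar$ is harmless.
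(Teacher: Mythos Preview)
Your proof is correct and follows exactly the approach the paper indicates: the paper omits the proof entirely, noting only that the lemma ``is an immediate consequence of the fact that for each $\gamma \in \Kbd$, conditionally on $\Gmc$, $X^j$ are i.i.d.\ for $j \in \Ngamma$,'' and your argument spells out precisely this reduction via a conditional Marcinkiewicz--Zygmund bound for bounded centered summands.
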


As an immediate consequence of the above lemma we have the following lemma.

\begin{lemma} \label{lemchap3commonfactor:expectation of difference of Y}
	For each $r \in \Nmb$, there exists $a_r \in (0,\infty)$ such that
	\begin{equation*}
		\sup_{t \in [0,T]} \EmbPbar \| Y^N_t - Y_t \|^r \le \frac{a_r}{N^{r/2}}.
	\end{equation*}
\end{lemma}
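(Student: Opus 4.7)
The plan is a standard Gronwall estimate applied to the difference equation
\begin{equation*}
Y^N_t - Y_t = \int_0^t [\bbar(Y^N_s,\mubd^N_s) - \bbar(Y_s,\mubd_s)] \, ds + \int_0^t [\sigmabar(Y^N_s,\mubd^N_s) - \sigmabar(Y_s,\mubd_s)] \, d\Wbar_s.
\end{equation*}
I will first treat $r \ge 2$; the case $r = 1$ then follows from $r = 2$ by Cauchy--Schwarz. Applying Jensen to the drift integral and a componentwise Burkholder--Davis--Gundy estimate to the martingale integral, then taking expectations, reduces matters to bounding
\begin{equation*}
\int_0^t \bigl( \EmbPbar \|\bbar(Y^N_s,\mubd^N_s) - \bbar(Y_s,\mubd_s)\|^r + \EmbPbar \|\sigmabar(Y^N_s,\mubd^N_s) - \sigmabar(Y_s,\mubd_s)\|^r \bigr) \, ds.
\end{equation*}

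Rather than use the $d_{BL}^{(K)}$-Lipschitz property from Condition~\ref{condchap3commonfactor:cond1}$(b)$ (which does not combine directly with Lemma~\ref{lemchap3commonfactor:order}), I will exploit the sharper smoothness afforded by $\bbar, \sigmabar_k \in \Thetabar$ (Condition~\ref{condchap3commonfactor:cond2}): for each $g \in \{\bbar,\sigmabar_1,\dotsc,\sigmabar_m\}$ and $\rbd = (y,\nubd)$, $\rbd' = (y',\nubd')$,
\begin{equation*}
\| g(\rbd') - g(\rbd) \| \le c_g \Bigl( \| y' - y \| + \sum_{\gamma=1}^K \max_{f \in \Jmcbar_g} | \langle f(y,\cdot) , \nu_\gamma' - \nu_\gamma \rangle | \Bigr).
\end{equation*}
The crucial feature is that $\Jmcbar_g$ is a \emph{finite} set of functions bounded by $1$, so the inner maximum can be dominated by a sum and each resulting term becomes amenable to Lemma~\ref{lemchap3commonfactor:order}.

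The main technical step is bounding $\EmbPbar |\langle f(Y_s,\cdot), \mu^{\gamma,N}_s - \mu^\gamma_s \rangle|^r$ for $f \in \Jmcbar_g$. This is where the common factor causes the mild complication that must be absorbed. Since $Y_s$ is $\Gmc$-measurable (Remark~\ref{rmkchap3commonfactor:rmk1}(ii)) and, conditionally on $\Gmc$, the family $\{X^j_s : j \in \Ngamma\}$ is i.i.d.\ with common law $\mu^\gamma_s$, the map $x \mapsto f(Y_s, x)$ is, under $\EmbPbar[\,\cdot\,|\,\Gmc]$, a deterministic function bounded by $1$. A conditional Marcinkiewicz--Zygmund/Rosenthal bound (which is exactly the proof of Lemma~\ref{lemchap3commonfactor:order}, carried out under $\EmbPbar[\,\cdot\,|\,\Gmc]$) then yields $\EmbPbar |\langle f(Y_s,\cdot), \mu^{\gamma,N}_s - \mu^\gamma_s\rangle|^r \le \kappa N_\gamma^{-r/2} \le \kappa' N^{-r/2}$, the last inequality using $N_\gamma/N \to \lambda_\gamma \in (0,1)$.

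Assembling these estimates produces the integral inequality
\begin{equation*}
\EmbPbar \|Y^N_t - Y_t\|^r \le \kappa_1 \int_0^t \EmbPbar \|Y^N_s - Y_s\|^r \, ds + \frac{\kappa_2}{N^{r/2}},
\end{equation*}
and Gronwall's lemma delivers the desired bound with $a_r = \kappa_2 e^{\kappa_1 T}$. The only non-routine ingredient is the conditional-i.i.d.\ reduction used to apply Lemma~\ref{lemchap3commonfactor:order} to the test function $f(Y_s,\cdot)$ whose first argument is itself a $\Gmc$-measurable random variable; once that point is cleared, everything else is a classical SDE moment estimate.
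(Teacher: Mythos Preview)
Your proof is correct and follows essentially the same route as the paper: write the difference equation, apply BDG and Jensen, use the $\Thetabar$-class bound from Condition~\ref{condchap3commonfactor:cond2} (not the $d_{BL}^{(K)}$-Lipschitz property) to reduce to finitely many test functions, invoke Lemma~\ref{lemchap3commonfactor:order}, and close with Gronwall. Your explicit treatment of the $\Gmc$-measurability of $Y_s$ when applying the moment bound to $\langle f(Y_s,\cdot), \mu^{\gamma,N}_s - \mu^\gamma_s\rangle$ is a point the paper leaves implicit (it simply cites Lemma~\ref{lemchap3commonfactor:order}, whose own proof already proceeds by conditioning on $\Gmc$), and your separate handling of $r=1$ via Cauchy--Schwarz is a small refinement the paper does not spell out.
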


\begin{proof}
	Fix $r \in \Nmb$ and $t \in [0,T]$. By~\eqref{eqchap3commonfactor:Yt},~\eqref{eqchap3commonfactor:YNt} and Burkholder-Davis-Gundy inequality, there exist $\kappa_1, \kappa_2 \in (0,\infty)$ such that
	\begin{align*}
		& \EmbPbar \| Y^N_t - Y_t \|^r \\
		& \quad \le \kappa_1 \EmbPbar \int_0^t \| \bbar(\Rbd^N_s) - \bbar(\Rbd_s) \|^r \, ds + \kappa_1 \EmbPbar \big( \int_0^t \| \sigmabar(\Rbd^N_s) - \sigmabar(\Rbd_s) \|^2 \, ds \big)^{r/2} \\
		& \quad \le \kappa_2 \EmbPbar \int_0^t \| Y^N_s - Y_s \|^r \, ds + \kappa_2 \EmbPbar \sum_{\gamma=1}^K \int_0^t \max_{f \in \Jmcbar_\bbar \cup \Jmcbar_\sigmabar} | \langle f(Y_s,\cdot),(\mu^{\gamma,N}_s - \mu^\gamma_s) \rangle |^r \, ds,
	\end{align*}
	where the last inequality follows from the fact that $\bbar$ and $\sigmabar$ satisfy Condition~\ref{condchap3commonfactor:cond2}. 
	Thus by Gronwall's inequality and Lemma~\ref{lemchap3commonfactor:order}, $\forall N \in \Nmb$
	\begin{equation*}
		\EmbPbar \| Y^N_t - Y_t \|^r \le \kappa_3 \sum_{\gamma=1}^K \int_0^t \EmbPbar \max_{f \in \Jmcbar_\bbar \cup \Jmcbar_\sigmabar} | \langle f(Y_s,\cdot),(\mu^{\gamma,N}_s - \mu^\gamma_s) \rangle |^r \, ds \le \frac{\kappa_4}{N^{r/2}}
	\end{equation*}
	for some $\kappa_4 \in (0,\infty)$.
\end{proof}

The following lemma follows from standard uniqueness results for stochastic differential equations (see eg. Theorem~$5.1.1$ of~\cite{Kallianpur1980}) and straightforward applications of It$\hat{\text{o}}$'s formula.
Recall the canonical space $\Omegabar^N$ from Section~\ref{secchap3commonfactor:CLT} along with the Borel $\sigma$-field $\bar{\Fmc}^N = \Bmc(\Omegabar^N)$ and probability measures $\Pmbbar^N$ (see~\eqref{eqchap3commonfactor:P N}).
Let $\{ \bar{\Fmc}^N_t \}_{t \in [0,T]}$ denote the canonical filtration on $(\Omegabar^N, \bar{\Fmc}^N)$.
Note that $\Wbar$ introduced in Section~\ref{secchap3commonfactor:canonical processes} is an $m$-dimensional $\{ \bar{\Fmc}^N_t \}$-BM under $\Pmbbar^N$.

\begin{lemma} \label{lemchap3commonfactor:SDE result}
	Let $\{ A_t \}_{t \in [0,T]}$, $\{ F^k_t \}_{t \in [0,T]}$, $k = 1,\dotsc,m$ be continuous bounded $\{ \bar{\Fmc}^N_t \}$-adapted processes with values in $\Rmb^{m \times m}$, given on $(\Omegabar^N, \bar{\Fmc}^N, \Pmbbar^N)$.
	Also let $\{ a_t \}_{t \in [0,T]}$, $\{ f^k_t \}_{t \in [0,T]}$, $k = 1,\dotsc,m$ be progressively measurable processes with values in $\Rmb^m$ such that
	\begin{equation*}
		\EmbPbar \int_0^T \|a_s\|^2 \, ds + \sum_{k=1}^m \EmbPbar \int_0^T \|f^k_s\|^2 \, ds < \infty.
	\end{equation*}
	Write $\Wbar = (\Wbar^1, \dotsc, \Wbar^m)$. 
	Then
	
	\noi $(a)$ The following $m \times m$ dimensional equation has a unique pathwise solution:
	\begin{align}
		\Phi_t & = I_m + \int_0^t A_s \Phi_s \, ds + \sum_{k=1}^m \int_0^t F^k_s \Phi_s \, d\Wbar^k_s, \label{eqchap3commonfactor:fundamental solution} \\
		\Psi_t & = I_m - \int_0^t \Psi_s A_s \, ds - \sum_{k=1}^m \int_0^t \Psi_s F^k_s \, d\Wbar^k_s + \sum_{k=1}^m \int_0^t \Psi_s (F^k_s)^2 \, ds, \label{eqchap3commonfactor:inverse of fundamental solution}
	\end{align}
	where $I_m$ is the $m \times m$ identity matrix.
	Furthermore, $\Phi_t$, $\Psi_t$ are $m \times m$ invertible matrices a.s.\ and $\Psi_t = \Phi_t^{-1}$. 

	\noi $(b)$ Given a square integrable $\bar{\Fmc}^N_0$-measurable random variable $\Yhat_0$, the following $m$-dimensional equation has a unique pathwise solution:
	\begin{equation} \label{eqchap3commonfactor:SDE}
		\Yhat_t = \Yhat_0 + \int_0^t (A_s \Yhat_s + a_s) \, ds + \sum_{k=1}^m \int_0^t (F^k_s \Yhat_s + f^k_s) \, d\Wbar^k_s.
	\end{equation}
	Furthermore the solution is given as
	\begin{equation} \label{eqchap3commonfactor:solution to SDE}
		\Yhat_t = \Phi_t \Big[ \Yhat_0 + \int_0^t \Phi_s^{-1} a_s \, ds + \sum_{k=1}^m \int_0^t \Phi_s^{-1} f^k_s \, d\Wbar^k_s - \sum_{k=1}^m \int_0^t \Phi_s^{-1} F^k_s f^k_s \, ds \Big].
	\end{equation}
\end{lemma}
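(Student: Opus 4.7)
The plan is to invoke standard SDE well-posedness for all three equations \eqref{eqchap3commonfactor:fundamental solution}, \eqref{eqchap3commonfactor:inverse of fundamental solution}, and \eqref{eqchap3commonfactor:SDE}, and then to verify the algebraic identities $\Psi_t = \Phi_t^{-1}$ and \eqref{eqchap3commonfactor:solution to SDE} by direct It\^o calculus. For existence and uniqueness I would appeal to Theorem~5.1.1 of~\cite{Kallianpur1980}: viewed as $\Rmb^{m \times m}$-valued SDEs, both \eqref{eqchap3commonfactor:fundamental solution} and \eqref{eqchap3commonfactor:inverse of fundamental solution} have coefficients that are linear in the matrix unknown with continuous bounded adapted coefficient processes $A_t, F^k_t$, hence globally Lipschitz and of linear growth. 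The affine equation \eqref{eqchap3commonfactor:SDE} is handled identically, with the square-integrability of $a_s, f^k_s$ ensuring finite second moment of $\Yhat_t$ and pathwise uniqueness in the class of processes with $\Emb_{\Pmbbar^N} \sup_{t \le T} \|\Yhat_t\|^2 < \infty$.

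For the identity $\Phi_t \Psi_t = I_m$ in part~(a), set $M_t \doteq \Phi_t \Psi_t$. Applying It\^o's product rule and the covariation computation $d\langle \Phi, \Psi\rangle_t = -\sum_k F^k_t \Phi_t \Psi_t F^k_t \, dt = -\sum_k F^k_t M_t F^k_t \, dt$, together with the extra drift correction $+\sum_k \int_0^t \Psi_s (F^k_s)^2 \, ds$ built into \eqref{eqchap3commonfactor:inverse of fundamental solution}, yields
\begin{equation*}
	dM_t = (A_t M_t - M_t A_t)\, dt + \sum_{k=1}^m \bigl(M_t (F^k_t)^2 - F^k_t M_t F^k_t\bigr)\, dt + \sum_{k=1}^m \bigl(F^k_t M_t - M_t F^k_t\bigr)\, d\Wbar^k_t.
\end{equation*}
This is itself a linear matrix SDE in $M$ with bounded adapted coefficients, and the constant process $M \equiv I_m$ manifestly solves it with the correct initial condition $M_0 = I_m$. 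Uniqueness for this SDE therefore forces $M_t = I_m$ a.s., so each $\Phi_t$ is a.s.\ invertible and $\Psi_t = \Phi_t^{-1}$.

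For part~(b) I would verify by direct computation that the right-hand side of \eqref{eqchap3commonfactor:solution to SDE} solves \eqref{eqchap3commonfactor:SDE}. Write $\Yhat_t = \Phi_t \hat{Z}_t$ where $\hat Z_t$ denotes the bracketed expression in \eqref{eqchap3commonfactor:solution to SDE}. It\^o's product rule produces three groups of terms: the drift and diffusion of $\Phi_t$ multiplied on the right by $\hat Z_t = \Phi_t^{-1} \Yhat_t$ regenerate $A_t \Yhat_t\, dt + \sum_k F^k_t \Yhat_t\, d\Wbar^k_t$; the driving terms of $d\hat Z_t$ multiplied on the left by $\Phi_t$ contribute $a_t\, dt + \sum_k f^k_t\, d\Wbar^k_t - \sum_k F^k_t f^k_t\, dt$; and the cross variation $\sum_k (F^k_t \Phi_t)(\Phi_t^{-1} f^k_t)\, dt = \sum_k F^k_t f^k_t\, dt$ exactly cancels the last correction. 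Hence the candidate process satisfies \eqref{eqchap3commonfactor:SDE}, and pathwise uniqueness identifies it with $\Yhat_t$.

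The only step where I expect a genuine risk of slippage is the non-commutative bookkeeping in the It\^o product rule for the matrix product $\Phi_t \Psi_t$; in particular the Stratonovich-style correction $+\sum_k \int_0^t \Psi_s (F^k_s)^2 \, ds$ in \eqref{eqchap3commonfactor:inverse of fundamental solution} is chosen precisely so that, evaluated at $M_t = I_m$, the $dt$-terms of $dM_t$ cancel pairwise (the $A$-commutator and the $F^k$-commutator) rather than producing a residual drift that would break the identity $\Phi_t \Psi_t = I_m$. Everything else reduces to routine SDE estimates under the boundedness of $A, F^k$.
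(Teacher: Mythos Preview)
Your proposal is correct and follows exactly the approach the paper indicates: the paper does not give a detailed proof but simply states that the lemma ``follows from standard uniqueness results for stochastic differential equations (see eg.\ Theorem~5.1.1 of~\cite{Kallianpur1980}) and straightforward applications of It\^o's formula.'' Your write-up is in fact a faithful expansion of precisely that sketch, and the It\^o computations for $\Phi_t\Psi_t$ and for $\Phi_t\hat Z_t$ are carried out correctly.
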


The following lemma will give a useful representation for $Y^N - Y$, and the function $\sbd_{\gamma,t}$ from $\Cmb_{\Rmb^{d+2m} \times [\Pmc(\Rmb^m)]^K}[0,t]$ to $\Rmb^m$ introduced in this lemma is used to define the integral operator $A^{\alpha\gamma}_\omegabar$ in Section~\ref{secchap3commonfactor:some random integral operators}.
Recall the functions $g_{(1)}$, $g_{(2)}$ and $\theta_g$ introduced above Condition~\ref{condchap3commonfactor:cond2} and centered functions defined in~\eqref{eqchap3commonfactor:bbar c} and~\eqref{eqchap3commonfactor:sigmabar c}.
Let $\Dbd = (\Wbar,\Rbd) = (\Wbar,Y,\mubd)$.

\begin{lemma} \label{lemchap3commonfactor:difference of Y}
	For $t \in [0,T]$,
	\begin{equation*}
		Y^N_t - Y_t = \sum_{\gamma=1}^K \frac{1}{N_\gamma} \sum_{j \in \Ngamma} \sbd_{\gamma,t}(X^j_{[0,t]},\Dbd_{[0,t]}) + \Tmc^N_1(t),
	\end{equation*}
	where
	\begin{align}
		\sbd_{\gamma,t}(X^j_{[0,t]},\Dbd_{[0,t]}) & = \Emc_t \int_0^t \Emc_s^{-1} \bbar^c_{(2),\gamma}(\Rbd_s,X^j_s) \, ds + \sum_{k=1}^m \Emc_t \int_0^t \Emc_s^{-1} \sigmabar^c_{k,(2),\gamma}(\Rbd_s,X^j_s) \, d\Wbar^k_s \notag \\
		& \qquad - \sum_{k=1}^m \Emc_t \int_0^t \Emc_s^{-1} \sigmabar_{k,(1)}(\Rbd_s) \sigmabar^c_{k,(2),\gamma}(\Rbd_s,X^j_s) \, ds \label{eqchap3commonfactor:s gamma t}\\
		\Tmc^N_1(t) & = \Emc_t \int_0^t \Emc_s^{-1} \theta_{\bbar}(\Rbd_s,\Rbd^N_s) \, ds + \sum_{k=1}^m \Emc_t \int_0^t \Emc_s^{-1} \theta_{\sigmabar_k}(\Rbd_s,\Rbd^N_s) \, d\Wbar^k_s \notag \\
		& \qquad - \sum_{k=1}^m \Emc_t \int_0^t \Emc_s^{-1} \sigmabar_{k,(1)}(\Rbd_s) \theta_{\sigmabar_k}(\Rbd_s,\Rbd^N_s) \, ds, \notag
	\end{align}
	and $\Emc_t = \tilde{\Emc}_t(\Dbd_{[0,t]})$ is the unique solution of the $m \times m$ dimensional SDE
	\begin{equation*}
		\Emc_t = I_m + \int_0^t \bbar_{(1)}(\Rbd_s) \Emc_s \, ds + \sum_{k=1}^m \int_0^t \sigmabar_{k,(1)}(\Rbd_s) \Emc_s \, d\Wbar^k_s.
	\end{equation*}
\end{lemma}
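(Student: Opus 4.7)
The plan is to derive a linear SDE for $Y^N_t - Y_t$ and then read off the claimed representation from the variation-of-constants formula~\eqref{eqchap3commonfactor:solution to SDE} of Lemma~\ref{lemchap3commonfactor:SDE result}(b). Subtracting~\eqref{eqchap3commonfactor:Yt} from~\eqref{eqchap3commonfactor:YNt} and then applying the first-order expansion from Condition~\ref{condchap3commonfactor:cond2} to each of $\bbar$ and $\sigmabar_k$ produces, for each $g \in \{\bbar,\sigmabar_k\}$, a linear-in-$(Y^N_s-Y_s)$ piece with coefficient $g_{(1)}(\Rbd_s)$, a linear-in-measure piece $\sum_\gamma \langle g_{(2),\gamma}(\Rbd_s,\cdot),\mu^{\gamma,N}_s-\mu^\gamma_s\rangle$, and a remainder $\theta_g(\Rbd_s,\Rbd^N_s)$. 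The measure pairings collapse into particle sums via
$$\Big\langle \bbar_{(2),\gamma}(\Rbd_s,\cdot),\, \mu^{\gamma,N}_s-\mu^\gamma_s\Big\rangle = \frac{1}{N_\gamma}\sum_{j\in\Ngamma} \bbar^c_{(2),\gamma}(\Rbd_s,X^j_s)$$
(and analogously for $\sigmabar_k$), invoking the centered functions in~\eqref{eqchap3commonfactor:bbar c}--\eqref{eqchap3commonfactor:sigmabar c}.

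The resulting equation has the form~\eqref{eqchap3commonfactor:SDE} with $\Yhat_t = Y^N_t-Y_t$, $\Yhat_0=0$, coefficients $A_s=\bbar_{(1)}(\Rbd_s)$, $F^k_s=\sigmabar_{k,(1)}(\Rbd_s)$, and forcing terms
\begin{align*}
a_s &= \sum_{\gamma=1}^K \frac{1}{N_\gamma}\sum_{j\in\Ngamma} \bbar^c_{(2),\gamma}(\Rbd_s,X^j_s) + \theta_{\bbar}(\Rbd_s,\Rbd^N_s),\\
f^k_s &= \sum_{\gamma=1}^K \frac{1}{N_\gamma}\sum_{j\in\Ngamma} \sigmabar^c_{k,(2),\gamma}(\Rbd_s,X^j_s) + \theta_{\sigmabar_k}(\Rbd_s,\Rbd^N_s).
\end{align*}
Before invoking Lemma~\ref{lemchap3commonfactor:SDE result}(b) I would verify its hypotheses: $A_s$ and $F^k_s$ are continuous, bounded and $\{\bar{\Fmc}^N_t\}$-adapted by Condition~\ref{condchap3commonfactor:cond2} and pathwise continuity of $\Rbd$; the centered particle sums in $a_s,f^k_s$ are bounded because $\bbar_{(2),\gamma},\sigmabar_{k,(2),\gamma}$ are bounded; and the $\theta$-contributions are square integrable by the quadratic bound~\eqref{eqchap3commonfactor:assumptions of theta b} (and its $\sigmabar$-analog) together with Lemma~\ref{lemchap3commonfactor:expectation of difference of Y} and Lemma~\ref{lemchap3commonfactor:order}. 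Uniqueness in~\eqref{eqchap3commonfactor:fundamental solution} with these $A_s,F^k_s$ identifies $\Phi_t$ there with $\Emc_t$.

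Substituting into~\eqref{eqchap3commonfactor:solution to SDE}, splitting $a_s,f^k_s$ into their particle-sum and $\theta$-parts, and using linearity to interchange the finite sum $\sum_{j\in\Ngamma}$ with the ordinary and stochastic integrals, the particle-sum contribution collects precisely into $\sum_\gamma \frac{1}{N_\gamma}\sum_{j\in\Ngamma} \sbd_{\gamma,t}(X^j_{[0,t]},\Dbd_{[0,t]})$ with $\sbd_{\gamma,t}$ given by~\eqref{eqchap3commonfactor:s gamma t}, while the $\theta$-part collects into $\Tmc^N_1(t)$, as claimed. The main subtlety is verifying that each summand $\sbd_{\gamma,t}(X^j_{[0,t]},\Dbd_{[0,t]})$ is indeed a deterministic functional of $(X^j_{[0,t]},\Dbd_{[0,t]})$ alone---the structural property that will be essential for the kernel constructions in Section~\ref{secchap3commonfactor:some random integral operators}---and this follows from the fact that $\Emc_t$ is a functional of $\Dbd_{[0,t]}$ only, while the inner integrands involve only $\Rbd_s$ (a component of $\Dbd_s$) and $X^j_s$; the remaining work is the $L^2$ bookkeeping needed to justify application of Lemma~\ref{lemchap3commonfactor:SDE result}(b).
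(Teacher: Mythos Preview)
Your proposal is correct and follows essentially the same approach as the paper: subtract the two SDEs, expand the increments of $\bbar$ and $\sigmabar_k$ via Condition~\ref{condchap3commonfactor:cond2}, and apply the variation-of-constants formula from Lemma~\ref{lemchap3commonfactor:SDE result}(b) with $A_s=\bbar_{(1)}(\Rbd_s)$, $F^k_s=\sigmabar_{k,(1)}(\Rbd_s)$ and the forcing terms you identify. The only cosmetic difference is that the paper writes $a_s,f^k_s$ with the measure pairings $\langle g_{(2),\gamma}(\Rbd_s,\cdot),\mu^{\gamma,N}_s-\mu^\gamma_s\rangle$ left intact and then reads off the result, whereas you first rewrite these as the centered particle sums; the two are equivalent by the identity you note.
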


\begin{proof}
	For $t \in [0,T]$, we have
	\begin{align*}
		& Y^N_t - Y_t \\
		& \quad = \int_0^t \Big( \bbar(\Rbd^N_s) - \bbar(\Rbd_s) \Big) \, ds + \sum_{k=1}^m \int_0^t \Big( \sigmabar_k(\Rbd^N_s) - \sigmabar_k(\Rbd_s) \Big) \, d\Wbar^k_s \\
		& \quad = \int_0^t \Big( \bbar_{(1)}(\Rbd_s) (Y^N_s - Y_s) + \sum_{\gamma=1}^K \langle \bbar_{(2),\gamma}(\Rbd_s,\cdot), (\mu^{\gamma,N}_s - \mu^\gamma_s) \rangle + \theta_{\bbar}(\Rbd_s,\Rbd^N_s) \Big) \, ds \\
		& \qquad + \sum_{k=1}^m \int_0^t \Big( \sigmabar_{k,(1)}(\Rbd_s) (Y^N_s - Y_s) + \sum_{\gamma=1}^K \langle \sigmabar_{k,(2),\gamma}(\Rbd_s,\cdot), (\mu^{\gamma,N}_s - \mu^\gamma_s) \rangle + \theta_{\sigmabar_k}(\Rbd_s,\Rbd^N_s) \Big) \, d\Wbar^k_s.
	\end{align*}
	The result is now immediate on applying Lemma~\ref{lemchap3commonfactor:SDE result} with $\Yhat = Y^N - Y$, $\Phi = \Emc$ and
	\begin{align*}
		A_s & = \bbar_{(1)}(\Rbd_s), & a_s & = \sum_{\gamma=1}^K \langle \bbar_{(2),\gamma}(\Rbd_s,\cdot), (\mu^{\gamma,N}_s - \mu^\gamma_s) \rangle + \theta_{\bbar}(\Rbd_s,\Rbd^N_s), & & \quad & \\
		F^k_s & = \sigmabar_{k,(1)}(\Rbd_s), & f^k_s & = \sum_{\gamma=1}^K \langle \sigmabar_{k,(2),\gamma}(\Rbd_s,\cdot), (\mu^{\gamma,N}_s - \mu^\gamma_s) \rangle + \theta_{\sigmabar_k}(\Rbd_s,\Rbd^N_s), \quad k= 1,\dotsc,m. & & \quad & \qedhere
	\end{align*}
\end{proof}

\begin{lemma} \label{lemchap3commonfactor:expectation of Emc Smc and T1}
	For every $r \in \Nmb$, we have $	\sup_{t \in [0,T]} \EmbPbar \| \Emc_t \|^r < \infty$, $\sup_{t \in [0,T]} \EmbPbar \| \Emc_t^{-1} \|^r < \infty$ and 
$$
\max_{\gamma \in \Kbd} \sup_{t \in [0,T]} \sup_{j \in \Ngamma} \EmbPbar \| \sbd_{\gamma,t}(X^j_{[0,t]},\Dbd_{[0,t]}) \|^r < \infty.
	$$
	There exist $a_0 \in (0,\infty)$ such that for all $t \in [0,T]$,
	\begin{equation*}
		\EmbPbar \Big\| \sum_{\gamma=1}^K \frac{1}{N_\gamma} \sum_{j \in \Ngamma} \sbd_{\gamma,t}(X^j_{[0,t]},\Dbd_{[0,t]}) \Big\|^2 \le \frac{a_0}{N}, \quad \EmbPbar \| \Tmc^N_1(t) \|^2 \le \frac{a_0}{N^2}.
	\end{equation*}
\end{lemma}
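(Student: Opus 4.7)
The plan is to handle the five assertions in sequence. The moment bounds on $\Emc_t$ and $\Emc_t^{-1}$ are standard: the linear SDE defining $\Emc_t$ has coefficients $\bbar_{(1)}(\Rbd_s)$ and $\sigmabar_{k,(1)}(\Rbd_s)$ that are uniformly bounded by Condition~\ref{condchap3commonfactor:cond2}, so a Burkholder-Davis-Gundy plus Gronwall estimate gives $\sup_{t \in [0,T]} \EmbPbar \|\Emc_t\|^r < \infty$ for every $r \in \Nmb$, and the analogous bound on $\Emc_t^{-1}$ follows from the linear SDE~\eqref{eqchap3commonfactor:inverse of fundamental solution}. The $L^r$ bound on $\sbd_{\gamma,t}(X^j_{[0,t]}, \Dbd_{[0,t]})$ then follows term by term from~\eqref{eqchap3commonfactor:s gamma t} by H\"older's and BDG's inequalities, together with the boundedness of $\bbar^c_{(2),\gamma}$, $\sigmabar^c_{k,(2),\gamma}$ and $\sigmabar_{k,(1)}$.

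For the $O(1/N)$ estimate, the key observation is that $\Dbd = (\Wbar, Y, \mubd)$ is $\Gmc$-measurable (since $\mubd = (\Pi^1(Y_0,\Wbar),\dotsc,\Pi^K(Y_0,\Wbar))$ by Remark~\ref{rmkchap3commonfactor:rmk1}(ii)), so $\Emc_t$, $\Emc_s^{-1}$ and $\Rbd_s$ are all $\Gmc$-measurable, and $\sbd_{\gamma,t}(X^j_{[0,t]}, \Dbd_{[0,t]})$ depends on the particle only through $X^j$. Under $\Pmbbar^N$, the disintegration below~\eqref{eqchap3commonfactor:P N} shows that conditional on $\Gmc$ the $\{X^j\}_{j \in \Ngamma}$ are i.i.d.\ with law $\mu^\gamma$, and that particle groups corresponding to different types are mutually independent. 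Because $\bbar^c_{(2),\gamma}(\Rbd_s,\cdot)$ and $\sigmabar^c_{k,(2),\gamma}(\Rbd_s,\cdot)$ integrate to zero against $\mu^\gamma_s$, a conditional Fubini argument (together with a stochastic Fubini for the $d\Wbar^k$ integral, valid because $\Wbar$ is $\Gmc$-measurable and may be treated as a frozen path when computing $\EmbPbar[\cdot \mid \Gmc]$) yields $\EmbPbar[\sbd_{\gamma,t}(X^j_{[0,t]}, \Dbd_{[0,t]}) \mid \Gmc] = 0$. All cross terms in the second moment therefore vanish by conditional independence, leaving
\[
\EmbPbar \Big\| \sum_{\gamma=1}^K \frac{1}{N_\gamma} \sum_{j \in \Ngamma} \sbd_{\gamma,t}(X^j_{[0,t]}, \Dbd_{[0,t]}) \Big\|^2 = \sum_{\gamma=1}^K \frac{1}{N_\gamma} \EmbPbar \|\sbd_{\gamma,t}(X^{j_0}_{[0,t]}, \Dbd_{[0,t]})\|^2
\]
for any fixed $j_0 \in \Ngamma$; combining with the previous $L^2$ bound and the assumption $N_\gamma / N \to \lambda_\gamma \in (0,1)$ gives the $a_0/N$ estimate.

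For $\Tmc^N_1(t)$, I would factor out $\Emc_t$ by H\"older's inequality, apply BDG to the stochastic integral, and reduce the problem to bounding $\EmbPbar\|\theta_{\bbar}(\Rbd_s, \Rbd^N_s)\|^{2p}$ and $\EmbPbar\|\theta_{\sigmabar_k}(\Rbd_s, \Rbd^N_s)\|^{2p}$ for $p$ chosen suitably large. The quadratic estimate
\[
\|\theta_g(\Rbd_s, \Rbd^N_s)\| \le c_g \Big(\|Y^N_s - Y_s\|^2 + \sum_{\gamma=1}^K \max_{f \in \Jmcbar_g} |\langle f(Y_s,\cdot), (\mu^{\gamma,N}_s - \mu^\gamma_s)\rangle|^2\Big)
\]
from Condition~\ref{condchap3commonfactor:cond2} is the crucial ingredient: raising both sides to the $2p$-th power and taking expectation, each term on the right contributes at most $C/N^{2p}$, since $\EmbPbar\|Y^N_s - Y_s\|^{4p} \le a_{4p}/N^{2p}$ by Lemma~\ref{lemchap3commonfactor:expectation of difference of Y}, and $\EmbPbar \max_{f \in \Jmcbar_g}|\langle f(Y_s,\cdot), (\mu^{\gamma,N}_s - \mu^\gamma_s)\rangle|^{4p} \le C/N^{2p}$ by Lemma~\ref{lemchap3commonfactor:order} applied conditionally on $\Gmc$ (where $Y_s$ is frozen and the $\{X^j\}_{j \in \Ngamma}$ are conditionally i.i.d., so $f(Y_s,\cdot)$ may be treated as a fixed function bounded by $1$). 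Combining with the moment bounds on $\Emc_t, \Emc_s^{-1}$ and integrating in $s$ yields $\EmbPbar\|\Tmc^N_1(t)\|^2 \le a_0/N^2$.

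The main conceptual obstacle is the conditional-expectation calculation $\EmbPbar[\sbd_{\gamma,t}(X^j_{[0,t]}, \Dbd_{[0,t]}) \mid \Gmc] = 0$; rigorously justifying it requires a stochastic Fubini against $d\Wbar^k$, for which one uses that $\Wbar$ is $\Gmc$-measurable and that conditionally on $\Gmc$ the particle $X^j$ has law $\mu^\gamma$ and the centered integrands $\bbar^c_{(2),\gamma}(\Rbd_s,\cdot)$, $\sigmabar^c_{k,(2),\gamma}(\Rbd_s,\cdot)$ annihilate against $\mu^\gamma_s$. Once this centering and the resulting conditional mean-zero, conditionally i.i.d.\ structure are in hand, the $1/N$ and $1/N^2$ bounds reduce to routine variance and moment computations.
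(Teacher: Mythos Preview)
Your argument is correct, and for the moment bounds on $\Emc_t$, $\Emc_t^{-1}$, $\sbd_{\gamma,t}$ and for the $O(1/N^2)$ estimate on $\Tmc^N_1(t)$ it coincides with the paper's proof.

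The one place you diverge is the $O(1/N)$ estimate on the averaged $\sbd_{\gamma,t}$. You argue via conditional centering: $\EmbPbar[\sbd_{\gamma,t}(X^j_{[0,t]},\Dbd_{[0,t]}) \mid \Gmc] = 0$, then exploit conditional i.i.d.\ orthogonality to reduce the second moment to $\sum_\gamma N_\gamma^{-1} \EmbPbar\|\sbd_{\gamma,t}\|^2$. The paper does \emph{not} compute this conditional expectation; instead it keeps the sum $\sum_\gamma N_\gamma^{-1}\sum_{j\in\Ngamma}$ inside each of the three pieces of $\sbd_{\gamma,t}$, applies Cauchy--Schwarz to peel off $\Emc_t$ (passing to a fourth moment), applies BDG to the $d\Wbar^k$ integral, Cauchy--Schwarz again to peel off $\Emc_s^{-1}$ (passing to an eighth moment), and then invokes Lemma~\ref{lemchap3commonfactor:order} to bound $\EmbPbar\big\|\sum_\gamma N_\gamma^{-1}\sum_j \sigmabar^c_{k,(2),\gamma}(\Rbd_s,X^j_s)\big\|^8 \le \kappa/N^4$. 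The square roots bring this back to $\kappa/N$. Your route is conceptually cleaner and avoids the escalation to eighth moments; the paper's route is more mechanical but sidesteps entirely the stochastic-Fubini issue you flag. That issue is real but benign: the It\^o integral $\int_0^t \Emc_s^{-1}\sigmabar^c_{k,(2),\gamma}(\Rbd_s,X^j_s)\,d\Wbar^k_s$ is an $L^2(\Pmbbar^N)$-limit of Riemann sums $\sum_i H_{s_i}(\Wbar^k_{s_{i+1}}-\Wbar^k_{s_i})$, each of which has zero $\Gmc$-conditional expectation (the $\Wbar$-increments pull out, and $\EmbPbar[H_{s_i}\mid\Gmc]=0$ by the centering of $\sigmabar^c$), and conditional expectation is an $L^2$-contraction, so the limit inherits the property. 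Making this explicit would close the only gap in your write-up.
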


\begin{proof}
	For fixed $r \in \Nmb$ and $t \in [0,T]$, it follows by the boundedness of $\bbar_{(1)}$ and $\sigmabar_{\cdot,(1)}$ that
	\begin{align*}
		& \EmbPbar \| \Emc_t \|^r \\
		& \quad = \EmbPbar \Big\| I_m + \int_0^t \bbar_{(1)}(\Rbd_s) \Emc_s \, ds + \sum_{k=1}^m \int_0^t \sigmabar_{k,(1)}(\Rbd_s) \Emc_s \, d\Wbar^k_s \Big\|^r \\
		& \quad \le 3^{r-1} \| I_m \|^r + (3t)^{r-1} \int_0^t \EmbPbar \| \bbar_{(1)}(\Rbd_s) \Emc_s \|^r \, ds + (3mt)^{r-1} \sum_{k=1}^m \int_0^t \EmbPbar \| \sigmabar_{k,(1)}(\Rbd_s) \Emc_s \|^r \, ds \\
		& \quad \le \kappa_1 \int_0^t \EmbPbar \| \Emc_s \|^r \, ds + \kappa_1.
	\end{align*}
	By Gronwall's inequality,
	\begin{equation} \label{eqchap3commonfactor:bound of Emb Emc}
		\sup_{t \in [0,T]} \EmbPbar \| \Emc_t \|^r < \infty.
	\end{equation}
	Using Lemma~\ref{lemchap3commonfactor:SDE result}, it follows by a similar argument that, for each $r \in \Nmb$
	\begin{equation} \label{eqchap3commonfactor:bound of Emb Emc inverse}
		\sup_{t \in [0,T]} \EmbPbar \| \Emc_t^{-1} \|^r < \infty.
	\end{equation}
	From~\eqref{eqchap3commonfactor:s gamma t},~\eqref{eqchap3commonfactor:bound of Emb Emc},~\eqref{eqchap3commonfactor:bound of Emb Emc inverse}, and boundedness of $\bbar_{(2),\cdot}, \sigmabar_{\cdot,(2),\cdot}, \sigmabar_{\cdot,(1)}$, it follows now that for each $\gamma \in \Kbd$,
	\begin{equation*}
		\sup_{t \in [0,T]} \sup_{j \in \Ngamma} \EmbPbar \| \sbd_{\gamma,t}(X^j_{[0,t]},\Dbd_{[0,t]}) \|^r < \infty.
	\end{equation*}
	Once again, by boundedness of $\sigmabar_{\cdot,(2),\cdot}$ and~\eqref{eqchap3commonfactor:bound of Emb Emc},~\eqref{eqchap3commonfactor:bound of Emb Emc inverse}, we get that
	\begin{align*}
		& \EmbPbar \Big\| \sum_{\gamma=1}^K \frac{1}{N_\gamma} \sum_{j \in \Ngamma}  \sum_{k=1}^m \Emc_t \int_0^t \Emc_s^{-1} \sigmabar^c_{k,(2),\gamma}(\Rbd_s,X^j_s) \, d\Wbar^k_s \Big\|^2 \\
		& \quad \le \big( \EmbPbar \| \Emc_t \|^4 \big)^{1/2} \Big( \EmbPbar \Big\|  \sum_{k=1}^m \int_0^t \Emc_s^{-1} \sum_{\gamma=1}^K \frac{1}{N_\gamma} \sum_{j \in \Ngamma} \sigmabar^c_{k,(2),\gamma}(\Rbd_s,X^j_s) \, d\Wbar^k_s \Big\|^4 \Big)^{1/2} \\
		& \quad \le \kappa_2 \Big[  \sum_{k=1}^m \int_0^t \EmbPbar \Big( \| \Emc_s^{-1} \|^4 \Big\| \sum_{\gamma=1}^K \frac{1}{N_\gamma} \sum_{j \in \Ngamma} \sigmabar^c_{k,(2),\gamma}(\Rbd_s,X^j_s) \Big\|^4 \Big) \, ds \Big]^{1/2} \\
		& \quad \le \kappa_2 \Big[ \sum_{k=1}^m \int_0^t \big( \EmbPbar \| \Emc_s^{-1} \|^8 \big)^{1/2} \Big( \EmbPbar \Big\| \sum_{\gamma=1}^K \frac{1}{N_\gamma} \sum_{j \in \Ngamma} \sigmabar^c_{k,(2),\gamma}(\Rbd_s,X^j_s) \Big\|^8 \Big)^{1/2} \, ds \Big]^{1/2} \\
		& \quad \le \frac{\kappa_3}{N},
	\end{align*}
	where the last inequality is from Lemma~\ref{lemchap3commonfactor:order}.
	Similarly, by boundedness of $\bbar_{(2),\cdot}, \sigmabar_{\cdot,(1)}, \sigmabar_{\cdot,(2),\cdot}$ and Lemma~\ref{lemchap3commonfactor:order}, we have that
	\begin{align*}
		& \EmbPbar \Big\| \sum_{\gamma=1}^K \frac{1}{N_\gamma} \sum_{j \in \Ngamma} \Big( \Emc_t \int_0^t \Emc_s^{-1} \bbar^c_{(2),\gamma}(\Rbd_s,X^j_s) \, ds \\
		& \hspace{20 mm} -  \sum_{k=1}^m \Emc_t \int_0^t \Emc_s^{-1} \sigmabar_{k,(1)}(\Rbd_s) \sigmabar^c_{k,(2),\gamma}(\Rbd_s,X^j_s) \, ds \Big) \Big\|^2 \le \frac{\kappa_4}{N}.
	\end{align*}
	Combining the above two observations and recalling the definition of $\sbd_{\gamma,t}$ from~\eqref{eqchap3commonfactor:s gamma t}, we have
	\begin{equation*}
		\EmbPbar \Big\| \sum_{\gamma=1}^K \frac{1}{N_\gamma} \sum_{j \in \Ngamma} \sbd_{\gamma,t}(X^j_{[0,t]},\Dbd_{[0,t]}) \Big\|^2 \le \frac{\kappa_5}{N}.
	\end{equation*}
	A similar argument using Condition~\ref{condchap3commonfactor:cond2},~\eqref{eqchap3commonfactor:bound of Emb Emc},~\eqref{eqchap3commonfactor:bound of Emb Emc inverse}, Lemmas~\ref{lemchap3commonfactor:order} and~\ref{lemchap3commonfactor:expectation of difference of Y} shows that
$\EmbPbar \| \Tmc^N_1(t) \|^2 \le \frac{\kappa_6}{N^2}$.
	The result follows.
\end{proof}


\subsection{Asymptotics of $J^{N,1}(T)$} \label{secchap3commonfactor:asymptotics of JN1}

In this section we analyze the term $J^{N,1}(T)$ defined in~\eqref{eqchap3commonfactor:JN1}.
Recall $\Smc_{\alpha\gamma} = \{(i,k) \in \Nalpha \times \Ngamma : i \ne k\}$ defined in Section~\ref{secchap2multitype:asymptotics of JN} for $\alpha, \gamma \in \Kbd$.

\begin{lemma} \label{lemchap3commonfactor:JN1}
	\begin{equation*}
		J^{N,1}(T) = \sum_{\alpha,\gamma=1}^K \frac{1}{N_\gamma} \sum_{(i,j) \in \Smc_{\alpha\gamma}} \int_0^T \fbd_{\alpha\gamma,t}(X^i_t,X^j_{[0,t]},\Dbd_{[0,t]}) \cdot d\Wit + \Rmc^N_1,
	\end{equation*}
	where $\Rmc^N_1 \to 0$ in probability under $\Pmbbar^N$, as $N \to \infty$.
\end{lemma}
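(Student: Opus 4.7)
The plan is to expand $b_\alpha(\Xis,\Rbd^N_s)-b_\alpha(\Xis,\Rbd_s)$ via the Taylor-type expansion of Condition~\ref{condchap3commonfactor:cond3}, substitute the representation of $Y^N-Y$ from Lemma~\ref{lemchap3commonfactor:difference of Y}, and recognise the resulting integrand against $d\Wis$ as the kernel $\fbd_{\alpha\gamma,s}$ plus explicit remainders. Condition~\ref{condchap3commonfactor:cond3} gives
\begin{equation*}
b_\alpha(\Xis,\Rbd^N_s) - b_\alpha(\Xis,\Rbd_s) = b_{\alpha,(1)}(\Xis,\Rbd_s)(Y^N_s - Y_s) + \sum_{\gamma=1}^K \langle b_{\alpha\gamma,(2)}(\Xis,\Rbd_s,\cdot),(\mu^{\gamma,N}_s - \mu^\gamma_s)\rangle + \theta_{b_\alpha}(\Xis,\Rbd_s,\Rbd^N_s);
\end{equation*}
the middle term equals $\sum_\gamma N_\gamma^{-1}\sum_{j\in\Ngamma} b^c_{\alpha\gamma,(2)}(\Xis,\Rbd_s,X^j_s)$ by the definition of $\mu^{\gamma,N}_s$ and of the centered coefficient, and substituting Lemma~\ref{lemchap3commonfactor:difference of Y} into the first term together with~\eqref{eqchap3commonfactor:f alpha gamma t} merges the first two terms into
\begin{equation*}
\sum_{\gamma=1}^K \frac{1}{N_\gamma}\sum_{j\in\Ngamma}\fbd_{\alpha\gamma,s}(\Xis,X^j_{[0,s]},\Dbd_{[0,s]}) + b_{\alpha,(1)}(\Xis,\Rbd_s)\Tmc^N_1(s).
\end{equation*}

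After multiplying by $d\Wis$ and summing over $\alpha\in\Kbd$ and $i\in\Nalpha$, the principal piece becomes a double sum over all $(i,j)\in\Nalpha\times\Ngamma$. Since $\Nalpha\cap\Ngamma=\emptyset$ when $\alpha\neq\gamma$, this already equals the sum over $\Smc_{\alpha\gamma}$; for $\alpha=\gamma$ the difference is the diagonal correction $\Rmc^{N,a}_1 = \sum_{\alpha}N_\alpha^{-1}\sum_{i\in\Nalpha}\int_0^T\fbd_{\alpha\alpha,s}(\Xis,X^i_{[0,s]},\Dbd_{[0,s]})\cdot d\Wis$. Writing $\Rmc^{N,b}_1$ and $\Rmc^{N,c}_1$ for the stochastic integrals of $b_{\alpha,(1)}(\Xis,\Rbd_s)\Tmc^N_1(s)$ and $\theta_{b_\alpha}(\Xis,\Rbd_s,\Rbd^N_s)$ against $d\Wis$, summed over $\alpha$ and $i$, this yields the claimed identity with $\Rmc^N_1 = \Rmc^{N,a}_1 + \Rmc^{N,b}_1 + \Rmc^{N,c}_1$.

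It remains to show each remainder vanishes in probability under $\Pmbbar^N$, which I would do via an $L^2(\Pmbbar^N)$ estimate. Independence of $\{W^i\}_{i\in\Nbd}$ makes the cross-variations $\langle W^i, W^{i'}\rangle\equiv 0$ for $i\neq i'$, so by It\^o's isometry the squared $L^2$ norm of each $\Rmc^{N,\cdot}_1$ collapses to a single sum $\sum_\alpha\sum_{i\in\Nalpha}\EmbPbar\int_0^T\|\cdot\|^2\,ds$. Boundedness of $b^c_{\alpha\alpha,(2)}$ and $b_{\alpha,(1)}$ together with the uniform $L^2$-bound on $\sbd_{\gamma,s}(X^j_{[0,s]},\Dbd_{[0,s]})$ from Lemma~\ref{lemchap3commonfactor:expectation of Emc Smc and T1} yields $\Rmc^{N,a}_1 = O_{L^2}(N^{-1/2})$, while boundedness of $b_{\alpha,(1)}$ and the estimate $\EmbPbar\|\Tmc^N_1(s)\|^2\le a_0/N^2$ from the same lemma yield $\Rmc^{N,b}_1 = O_{L^2}(N^{-1/2})$. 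The hard part will be $\Rmc^{N,c}_1$: the $N$-fold sum over particles forces me to control $\theta_{b_\alpha}$ at the \emph{quadratic} level, and this is exactly where the second-order bound~\eqref{eqchap3commonfactor:assumptions of theta b} (rather than mere Lipschitz continuity) is indispensable. Combining it with Lemma~\ref{lemchap3commonfactor:expectation of difference of Y} and Lemma~\ref{lemchap3commonfactor:order} applied at $r=4$ gives $\EmbPbar\|\theta_{b_\alpha}(\Xis,\Rbd_s,\Rbd^N_s)\|^2=O(1/N^2)$ uniformly in $s$ and $i$, so summing over the $N$ particles still produces $\Rmc^{N,c}_1=O_{L^2}(N^{-1/2})$, completing the argument.
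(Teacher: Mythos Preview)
Your proposal is correct and follows essentially the same route as the paper: expand via Condition~\ref{condchap3commonfactor:cond3}, insert the representation from Lemma~\ref{lemchap3commonfactor:difference of Y}, separate the diagonal, and bound the three remainders in $L^2(\Pmbbar^N)$ using It\^o isometry together with Lemmas~\ref{lemchap3commonfactor:order},~\ref{lemchap3commonfactor:expectation of difference of Y} and~\ref{lemchap3commonfactor:expectation of Emc Smc and T1}. The only organizational difference is that you first merge the $b^c_{\alpha\gamma,(2)}$ and $b_{\alpha,(1)}\sbd_{\gamma,t}$ pieces into $\fbd_{\alpha\gamma,t}$ and then peel off the diagonal, whereas the paper extracts the diagonal from each piece separately before combining; the estimates and rates are identical.
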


\begin{proof}
	Note that for each $\alpha \in \Kbd$ and $i \in \Nalpha$,
	\begin{align} \label{eqchap3commonfactor:difference of b alpha}
		\begin{aligned}
			& b_\alpha(\Xit,\Rbd^N_t) - b_\alpha(\Xit,\Rbd_t) \\
			& \quad = b_{\alpha,(1)}(\Xit,\Rbd_t) (Y^N_t - Y_t) + \sum_{\gamma=1}^K \langle b_{\alpha\gamma,(2)}(\Xit,\Rbd_t,\cdot), (\mu^{\gamma,N}_t - \mu^\gamma_t) \rangle + \theta_{b_\alpha}(\Xit,\Rbd_t,\Rbd^N_t).
		\end{aligned}
	\end{align}
	For the last term in above display, we have
	\begin{align}
		& \EmbPbar \Big( \sum_{\alpha=1}^K \sum_{i \in \Nalpha} \int_0^T \theta_{b_\alpha}(\Xit,\Rbd_t,\Rbd^N_t) \cdot d\Wit \Big)^2 \notag \\
		& \quad = \sum_{\alpha=1}^K \sum_{i \in \Nalpha} \int_0^T \EmbPbar \| \theta_{b_\alpha}(\Xit,\Rbd_t,\Rbd^N_t) \|^2 \, dt \notag \\
		& \quad \le \kappa_1 \sum_{\alpha=1}^K \sum_{i \in \Nalpha} \int_0^T \EmbPbar \| Y^N_t - Y_t \|^4 \, dt \notag \\
		& \qquad + \kappa_1 \sum_{\alpha,\gamma=1}^K \sum_{i \in \Nalpha} \sum_{f \in \Jmc_b} \int_0^T \EmbPbar | \langle f(\Xit,Y_t,\cdot),(\mu^{\gamma,N}_t - \mu^\gamma_t) \rangle |^4 \, dt \notag \\
		& \quad \le \frac{\kappa_2}{N} \to 0,  \label{eqchap3commonfactor:first term in difference of b alpha}
	\end{align}
	where the first inequality is from~\eqref{eqchap3commonfactor:assumptions of theta b} and the last inequality follows from Lemmas~\ref{lemchap3commonfactor:order} and~\ref{lemchap3commonfactor:expectation of difference of Y}.
	
	Now consider the second term on the right side of~\eqref{eqchap3commonfactor:difference of b alpha}:
	\begin{align*}
		& \sum_{\alpha=1}^K \sum_{i \in \Nalpha} \int_0^T \sum_{\gamma=1}^K \langle b_{\alpha\gamma,(2)}(\Xit,\Rbd_t,\cdot), (\mu^{\gamma,N}_t - \mu^\gamma_t) \rangle \cdot d\Wit \\
		& \, = \sum_{\alpha,\gamma=1}^K \frac{1}{N_\gamma} \sum_{(i,j) \in \Smc_{\alpha\gamma}} \int_0^T b^c_{\alpha\gamma,(2)}(\Xit,\Rbd_t,\Xjt) \cdot d\Wit + \sum_{\alpha=1}^K \frac{1}{N_\alpha} \sum_{i \in \Nalpha} \int_0^T b^c_{\alpha\alpha,(2)}(\Xit,\Rbd_t,\Xit) \cdot d\Wit.
	\end{align*}
	Using the boundedness of $b_{\cdot \cdot,(2)}$ it follows that,
	\begin{equation*}
		\EmbPbar \Big( \sum_{\alpha=1}^K \frac{1}{N_\alpha} \sum_{i \in \Nalpha} \int_0^T b^c_{\alpha\alpha,(2)}(\Xit,\Rbd_t,\Xit) \cdot d\Wit \Big)^2 \to 0.
	\end{equation*}
	
	Finally consider the first term on the right side of~\eqref{eqchap3commonfactor:difference of b alpha}.
	It follows from Lemma~\ref{lemchap3commonfactor:difference of Y} that
	\begin{align*}
		& \quad \sum_{\alpha=1}^K \sum_{i \in \Nalpha} \int_0^T b_{\alpha,(1)}(\Xit,\Rbd_t) (Y^N_t - Y_t) \cdot d\Wit \\
		& = \sum_{\alpha,\gamma=1}^K \frac{1}{N_\gamma} \sum_{(i,j) \in \Smc_{\alpha\gamma}} \int_0^T b_{\alpha,(1)}(\Xit,\Rbd_t) \sbd_{\gamma,t}(X^j_{[0,t]},\Dbd_{[0,t]}) \cdot d\Wit \\
		& \quad + \sum_{\alpha=1}^K \frac{1}{N_\alpha} \sum_{i \in \Nalpha} \int_0^T b_{\alpha,(1)}(\Xit,\Rbd_t) \sbd_{\alpha,t}(X^i_{[0,t]},\Dbd_{[0,t]}) \cdot d\Wit \\
		& \quad + \sum_{\alpha=1}^K \sum_{i \in \Nalpha} \int_0^T b_{\alpha,(1)}(\Xit,\Rbd_t) \Tmc^N_1(t) \cdot d\Wit.
	\end{align*}
	By boundedness of $b_{\cdot,(1)}$ and Lemma~\ref{lemchap3commonfactor:expectation of Emc Smc and T1}, we have
	\begin{align*}
		 \EmbPbar \Big\| \sum_{\alpha=1}^K \sum_{i \in \Nalpha} \int_0^T b_{\alpha,(1)}(\Xit,\Rbd_t) \Tmc^N_1(t) \cdot d\Wit \Big\|^2 
		\le \frac{\kappa_3}{N}.
	\end{align*}
	and
	\begin{align*}
		 \EmbPbar \Big\| \sum_{\alpha=1}^K \frac{1}{N_\alpha} \sum_{i \in \Nalpha} \int_0^T b_{\alpha,(1)}(\Xit,\Rbd_t) \sbd_{\alpha,t}(X^i_{[0,t]},\Dbd_{[0,t]}) \cdot d\Wit \Big\|^2 
		\le \frac{\kappa_4}{N}.
	\end{align*}
	Result now follows by combining above observations and recalling $\fbd_{\alpha\gamma,t}$ defined in~\eqref{eqchap3commonfactor:f alpha gamma t}.
\end{proof}


\subsection{Asymptotics of $J^{N,2}(T)$} \label{secchap3commonfactor:asymptotics of JN2} 

In this section we analyze the term $J^{N,2}(T)$ defined in~\eqref{eqchap3commonfactor:JN2}.
We will need some notations.

Let $x \in \Cmc_d$ and $\zbd = (x^{(1)},x^{(2)},\dbd) \in \Cmb_{\Rmb^{2d+2m} \times [\Pmc(\Rmb^d)]^K}[0,T]$ with $\dbd = (w,\rbd) = (w,y,\nubd)$.
Define for $\alpha, \beta, \gamma \in \Kbd$, functions $\sbd_{\alpha\beta\gamma,i}$, $i=1,2,3$, and $\sbd_{\alpha\beta\gamma}$ from $\Cmb_{\Rmb^{3d+2m} \times [\Pmc(\Rmb^d)]^K}[0,T]$ to $\Rmb$ as follows:
\begin{align}
	\sbd_{\alpha\beta\gamma,1}(x,\zbd) & = \int_0^T b_{\alpha,(1)}(x_t,\rbd_t)\sbd_{\beta,t}(x^{(1)}_{[0,t]},\dbd_{[0,t]}) \cdot b_{\alpha,(1)}(x_t,\rbd_t)\sbd_{\gamma,t}(x^{(2)}_{[0,t]},\dbd_{[0,t]}) \, dt, \notag \\
	\sbd_{\alpha\beta\gamma,2}(x,\zbd) & = \int_0^T \Big( b_{\alpha,(1)}(x_t,\rbd_t)\sbd_{\beta,t}(x^{(1)}_{[0,t]},\dbd_{[0,t]}) \cdot b^c_{\alpha\gamma,(2)}(x_t,\rbd_t,x^{(2)}_t) \notag \\
	& \hspace{20 mm} + b_{\alpha,(1)}(x_t,\rbd_t)\sbd_{\gamma,t}(x^{(2)}_{[0,t]},\dbd_{[0,t]}) \cdot b^c_{\alpha\beta,(2)}(x_t,\rbd_t,x^{(1)}_t) \Big) \, dt, \notag \\
	\sbd_{\alpha\beta\gamma,3}(x,\zbd) & = \int_0^T b^c_{\alpha\beta,(2)}(x_t,\rbd_t,x^{(1)}_t) \cdot b^c_{\alpha\gamma,(2)}(x_t,\rbd_t,x^{(2)}_t) \, dt, \notag \\
	\sbd_{\alpha\beta\gamma}(x,\zbd) & = \sum_{i=1}^3 \sbd_{\alpha\beta\gamma,i}(x,\zbd). \label{eqchap3commonfactor:s alpha beta gamma}
\end{align}
Note that 
\begin{equation} \label{eqchap3commonfactor:inner product of f}
	\sbd_{\alpha\beta\gamma}(x,\zbd) = \int_0^T \fbd_{\alpha\beta,t}(x_t,x^{(1)}_{[0,t]},\dbd_{[0,t]}) \cdot \fbd_{\alpha\gamma,t}(x_t,x^{(2)}_{[0,t]},\dbd_{[0,t]}) \, dt,
\end{equation}
where $\fbd_{\alpha\beta,t}$ is as in~\eqref{eqchap3commonfactor:f alpha gamma t}.
Define $\mbd_{\alpha\beta\gamma} : \Cmb_{\Rmb^{2d+2m} \times [\Pmc(\Rmb^d)]^K}[0,T] \to \Rmb$ as
\begin{equation} \label{eqchap3commonfactor:m alpha beta gamma}
	\mbd_{\alpha\beta\gamma}(\zbd) = \int_0^T \int_{\Rmb^d} \fbd_{\alpha\beta,t}(x',x^{(1)}_{[0,t]},\dbd_{[0,t]}) \cdot \fbd_{\alpha\gamma,t}(x',x^{(2)}_{[0,t]},\dbd_{[0,t]}) \, \nu_{\alpha,t}(dx') \, dt,
\end{equation}
and let $\sbd_{\alpha\beta\gamma}^c : \Cmb_{\Rmb^{3d+2m} \times [\Pmc(\Rmb^d)]^K}[0,T] \to \Rmb$ be given as
\begin{equation*}
	\sbd_{\alpha\beta\gamma}^c(x,\zbd) = \sbd_{\alpha\beta\gamma}(x,\zbd) - \mbd_{\alpha\beta\gamma}(\zbd).
\end{equation*}
Recall $\Dbd$ introduced above Lemma~\ref{lemchap3commonfactor:difference of Y}.
The following lemma gives a useful representation for $J^{N,2}(T)$.

\begin{lemma} \label{lemchap3commonfactor:JN2}
	\begin{equation*}
		J^{N,2}(T) = \sum_{\alpha,\beta,\gamma=1}^K \frac{N_\alpha}{N_\beta N_\gamma} \sum_{(j,k) \in \Smc_{\beta\gamma}} \mbd_{\alpha\beta\gamma}(X^j,X^k,\Dbd) + \sum_{\alpha,\gamma=1}^K \frac{N_\alpha}{N_\gamma^2} \sum_{j \in \Ngamma} \mbd_{\alpha\gamma\gamma}(X^j,X^j,\Dbd) + \Rmc^N_2,
	\end{equation*}
	where $\Rmc^N_2 \to 0$ in probability under $\Pmbbar^N$, as $N \to \infty$.
\end{lemma}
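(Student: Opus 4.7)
My plan is to mirror the analysis of $J^{N,1}(T)$ in Lemma~\ref{lemchap3commonfactor:JN1}, starting from the first-order decomposition established there,
\begin{equation*}
b_\alpha(\Xit,\Rbd^N_t)-b_\alpha(\Xit,\Rbd_t) = \sum_{\gamma=1}^K \frac{1}{N_\gamma}\sum_{j\in\Ngamma}\fbd_{\alpha\gamma,t}(\Xit,X^j_{[0,t]},\Dbd_{[0,t]}) + \Rmc^{N,0}_{i,\alpha}(t),
\end{equation*}
where $\Rmc^{N,0}_{i,\alpha}(t) := b_{\alpha,(1)}(\Xit,\Rbd_t)\Tmc^N_1(t)+\theta_{b_\alpha}(\Xit,\Rbd_t,\Rbd^N_t)$. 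Squaring this identity, integrating over $[0,T]$, and summing over $i\in\Nalpha$ and $\alpha\in\Kbd$ produces a bilinear main term, a cross term, and a quadratic remainder. I would then recognize the bilinear main term as the two explicit sums in the statement modulo a vanishing law-of-large-numbers error, and show that both the cross term and the quadratic remainder vanish in $\Pmbbar^N$-probability.

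For the main term, the identity~\eqref{eqchap3commonfactor:inner product of f} gives $\sum_{\alpha,\beta,\gamma}\frac{1}{N_\beta N_\gamma}\sum_{i\in\Nalpha,j\in\Nbeta,k\in\Ngamma}\sbd_{\alpha\beta\gamma}(X^i,X^j,X^k,\Dbd)$. Splitting $\sum_{j\in\Nbeta,k\in\Ngamma}$ into $\sum_{(j,k)\in\Smc_{\beta\gamma}}$ plus $\one_{\{\beta=\gamma\}}\sum_{j\in\Ngamma}$ (with $k=j$) exposes the off-diagonal and diagonal structure of the target. For each such $(j,k)$, writing $\sbd_{\alpha\beta\gamma}=\mbd_{\alpha\beta\gamma}+\sbd^c_{\alpha\beta\gamma}$ and using $\sum_{i\in\Nalpha}\mbd_{\alpha\beta\gamma}(X^j,X^k,\Dbd)=N_\alpha\mbd_{\alpha\beta\gamma}(X^j,X^k,\Dbd)$ produces the target $N_\alpha$ prefactors. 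The residuals $\sum_{i\in\Nalpha}\sbd^c_{\alpha\beta\gamma}(X^i,X^j,X^k,\Dbd)$ are handled by conditioning on $\Gmc$: for $i\in\Nalpha\setminus\{j,k\}$ the $X^i$ are conditionally i.i.d.\ with law $\mu^\alpha$, and by~\eqref{eqchap3commonfactor:m alpha beta gamma} the $\Gmc\vee\sigma(X^j,X^k)$-conditional mean of $\sbd_{\alpha\beta\gamma}(X^i,X^j,X^k,\Dbd)$ equals $\mbd_{\alpha\beta\gamma}(X^j,X^k,\Dbd)$. A second-moment estimate using boundedness of $\fbd_{\alpha\beta,t}$ (which follows from boundedness of $b^c_{\alpha\gamma,(2)}$, $b_{\alpha,(1)}$, and Lemma~\ref{lemchap3commonfactor:expectation of Emc Smc and T1} for $\sbd_{\gamma,t}$) then yields $O(1/\sqrt{N})$ for this residual in $L^1(\Pmbbar^N)$.

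The quadratic remainder $\sum_\alpha\sum_{i\in\Nalpha}\int_0^T\|\Rmc^{N,0}_{i,\alpha}(t)\|^2\,dt$ I would bound in $L^1$ by $N\cdot O(1/N^2)=O(1/N)$, using Lemma~\ref{lemchap3commonfactor:expectation of Emc Smc and T1} for the $\Tmc^N_1$ contribution and the quadratic estimate~\eqref{eqchap3commonfactor:assumptions of theta b} together with Lemmas~\ref{lemchap3commonfactor:order} and~\ref{lemchap3commonfactor:expectation of difference of Y} (applied with $r=4$) for the $\theta_{b_\alpha}$ piece. The hard part will be the cross term $2\sum_\alpha\sum_{i\in\Nalpha}\int_0^T\bigl(\sum_{\gamma,j}\frac{1}{N_\gamma}\fbd_{\alpha\gamma,t}(\Xit,X^j_{[0,t]},\Dbd_{[0,t]})\bigr)\cdot\Rmc^{N,0}_{i,\alpha}(t)\,dt$: a direct Cauchy--Schwarz bound only gives $O(1)$. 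To do better I would exploit that $\fbd_{\alpha\gamma,t}$ is built from centered functions---$b^c_{\alpha\gamma,(2)}$ is $\mu^\gamma$-centered by construction, and $\sbd_{\gamma,t}(\cdot,\Dbd)$ has vanishing $\Gmc$-conditional mean because $\bbar^c_{(2),\gamma}$ and $\sigmabar^c_{k,(2),\gamma}$ integrate to zero against $\mu^\gamma_s$. Conditioning on $\Gmc$ and on $X^i$ leaves only the diagonal $j=i$ contribution, reducing the inner bracket to size $O(1/N)$; combining this with the $O(1/N)$ $L^2$-bound on $\Rmc^{N,0}_{i,\alpha}$ via an iterated conditioning that uses the explicit representation of $\Tmc^N_1$ from Lemma~\ref{lemchap3commonfactor:difference of Y} should yield the desired $o(1)$ decay, completing the proof that $\Rmc^N_2\to 0$ in $\Pmbbar^N$-probability.
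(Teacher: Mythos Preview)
Your strategy is essentially the paper's, just packaged around the $\fbd$-decomposition rather than first expanding the square in the three pieces $b_{\alpha,(1)}(Y^N-Y)$, $\sum_\gamma\langle b_{\alpha\gamma,(2)},\cdot\rangle$, $\theta_{b_\alpha}$ (which is what the paper does before reassembling into $\sbd_{\alpha\beta\gamma}$). Two points need repair, however.

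\emph{The cross term is not hard.} Conditioning on $\Gmc$ and $X^i$ shows the off-diagonal summands in the $\fbd$-sum are \emph{centered}, not zero; the $\fbd$-sum therefore has $L^2$ size $O(N^{-1/2})$, not $O(N^{-1})$. This is immediate from Lemma~\ref{lemchap3commonfactor:order} for the $b^c_{\alpha\gamma,(2)}$ contribution and from Lemma~\ref{lemchap3commonfactor:expectation of Emc Smc and T1} for the $b_{\alpha,(1)}\sbd_{\gamma,t}$ contribution. Since $\|\Rmc^{N,0}_{i,\alpha}(t)\|_{L^2}=O(N^{-1})$, a single Cauchy--Schwarz already gives $N\cdot O(N^{-1/2})\cdot O(N^{-1})=O(N^{-1/2})$ for the whole cross term; no iterated conditioning or explicit representation of $\Tmc^N_1$ is needed. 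This is exactly how the paper dispatches the corresponding pieces $\Tmc^{N,i}_2$, $\Tmc^{N,i}_3$ and $\Tmc^N_5$.

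\emph{The $\sbd^c$ residual needs centering in all three slots.} Your second-moment argument for $\frac{1}{N_\beta N_\gamma}\sum_{i,j,k}\sbd^c_{\alpha\beta\gamma}(X^i,X^j,X^k,\Dbd)$ invokes only the centering in the $i$-slot. That alone does not deliver the bound: in the variance expansion the terms with $i=i'$ but $(j,k)\ne(j',k')$ must also be killed, and for that you need $\sbd^c_{\alpha\beta\gamma}$ to be $\Gmc$-conditionally centered in the $j$- and $k$-slots as well. This extra centering follows from the centering of $\fbd_{\alpha\beta,t}(\cdot,x',\Dbd)$ in $x'$ under $\mu^\beta$ (which you already identified for the cross term), carried through \eqref{eqchap3commonfactor:inner product of f} and \eqref{eqchap3commonfactor:m alpha beta gamma}. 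The paper states all three vanishing properties explicitly before concluding.
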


\begin{proof}
	Note that for each $\alpha \in \Kbd$ and $i \in \Nalpha$,
	\begin{align}
		& \| b_\alpha(\Xit,\Rbd^N_t) - b_\alpha(\Xit,\Rbd_t) \|^2 \notag \\
		& \: \: = \Big\| b_{\alpha,(1)}(\Xit,\Rbd_t) (Y^N_t - Y_t) + \sum_{\gamma=1}^K \langle b_{\alpha\gamma,(2)}(\Xit,\Rbd_t,\cdot), (\mu^{\gamma,N}_t - \mu^\gamma_t) \rangle + \theta_{b_\alpha}(\Xit,\Rbd_t,\Rbd^N_t) \Big\|^2 \notag \\
		& \: \: = \| b_{\alpha,(1)}(\Xit,\Rbd_t) (Y^N_t - Y_t) \|^2 + \Big\| \sum_{\gamma=1}^K \langle b_{\alpha\gamma,(2)}(\Xit,\Rbd_t,\cdot), (\mu^{\gamma,N}_t - \mu^\gamma_t) \rangle \Big\|^2 + \|\theta_{b_\alpha}(\Xit,\Rbd_t,\Rbd^N_t) \|^2 \notag \\
		& \: \: \quad + 2 b_{\alpha,(1)}(\Xit,\Rbd_t) (Y^N_t - Y_t) \cdot \sum_{\gamma=1}^K \langle b_{\alpha\gamma,(2)}(\Xit,\Rbd_t,\cdot), (\mu^{\gamma,N}_t - \mu^\gamma_t) \rangle + \Tmc^{N,i}_2(t), \label{eqchap3commonfactor:square of difference of b alpha}
	\end{align}
	where $\Tmc^{N,i}_2(t)$ consists of the remaining two crossproduct terms.
	Using~\eqref{eqchap3commonfactor:assumptions of theta b}, Lemma~\ref{lemchap3commonfactor:order} and~\ref{lemchap3commonfactor:expectation of difference of Y}, as for the proof of~\eqref{eqchap3commonfactor:first term in difference of b alpha}, we see that
	\begin{equation} \label{eqchap3commonfactor:estimates of third term in JN2}
		\EmbPbar \sum_{\alpha=1}^K \sum_{i \in \Nalpha} \int_0^T \|\theta_{b_\alpha}(\Xit,\Rbd_t,\Rbd^N_t) \|^2 \, dt \le \frac{\kappa_1}{N} \to 0 \text{ as } N \to \infty.
	\end{equation}
	Similar estimates together with Cauchy-Schwarz inequality show that
	\begin{equation} \label{eqchap3commonfactor:estimates of fifth term in JN2}
		\EmbPbar \sum_{\alpha=1}^K \sum_{i \in \Nalpha} \int_0^T | \Tmc^{N,i}_2(t) | \, dt \le \frac{\kappa_2}{\sqrt{N}}\to 0 \text{ as } N \to \infty.
	\end{equation}

	Next we study the first term on the right side of~\eqref{eqchap3commonfactor:square of difference of b alpha}.
	Using Lemma~\ref{lemchap3commonfactor:difference of Y}, we have
	\begin{align*}
		& \| b_{\alpha,(1)}(\Xit,\Rbd_t) (Y^N_t - Y_t) \|^2 \\
		& \quad = \Big\| b_{\alpha,(1)}(\Xit,\Rbd_t) \Big( \sum_{\gamma=1}^K \frac{1}{N_\gamma} \sum_{j \in \Ngamma} \sbd_{\gamma,t}(X^j_{[0,t]},\Dbd_{[0,t]}) + \Tmc^N_1(t) \Big) \Big\|^2 \\
		& \quad = \Big\| b_{\alpha,(1)}(\Xit,\Rbd_t) \sum_{\gamma=1}^K \frac{1}{N_\gamma} \sum_{j \in \Ngamma} \sbd_{\gamma,t}(X^j_{[0,t]},\Dbd_{[0,t]}) \Big\|^2 + \| b_{\alpha,(1)}(\Xit,\Rbd_t) \Tmc^N_1(t) \big) \|^2 + \Tmc^{N,i}_3(t),
	\end{align*}
	where $\Tmc^{N,i}_3(t)$ is the corresponding crossproduct term.
	Making use of the boundedness of $b_{\cdot,(1)}$ and Lemma~\ref{lemchap3commonfactor:expectation of Emc Smc and T1}, we can show that
	\begin{gather*}
		\EmbPbar \sum_{\alpha=1}^K \sum_{i \in \Nalpha} \int_0^T \| b_{\alpha,(1)}(\Xit,\Rbd_t) \Tmc^N_1(t) \|^2 \, dt \le \frac{\kappa_3}{N}, \\
		\EmbPbar \sum_{\alpha=1}^K \sum_{i \in \Nalpha} \int_0^T | \Tmc^{N,i}_3(t) | \, dt \le \frac{\kappa_4}{\sqrt{N}}.
	\end{gather*}
	Thus recalling the definition of $\sbd_{\alpha\beta\gamma,1}$, we have
	\begin{align} \label{eqchap3commonfactor:studying first term in JN2}
		\begin{aligned} 
			& \sum_{\alpha=1}^K \sum_{i \in \Nalpha} \int_0^T \| b_{\alpha,(1)}(\Xit,\Rbd_t) (Y^N_t - Y_t) \|^2 \, dt \\
			& \quad = \sum_{\alpha=1}^K \sum_{i \in \Nalpha} \int_0^T \Big\| b_{\alpha,(1)}(\Xit,\Rbd_t) \sum_{\gamma=1}^K \frac{1}{N_\gamma} \sum_{j \in \Ngamma} \sbd_{\gamma,t}(X^j_{[0,t]},\Dbd_{[0,t]}) \Big\|^2 \, dt + \tilde{\Rmc}^N_1 \\
			& \quad = \sum_{\alpha,\beta,\gamma=1}^K \frac{1}{N_\beta N_\gamma} \sum_{i \in \Nalpha, j \in \Nbeta, k \in \Ngamma} \sbd_{\alpha\beta\gamma,1}(X^i,X^j,X^k,\Dbd) + \tilde{\Rmc}^N_1,
		\end{aligned}
	\end{align}
	where $\EmbPbar | \tilde{\Rmc}^N_1 | \to 0$ as $N \to \infty$.

	We now consider the second term on the right side of~\eqref{eqchap3commonfactor:square of difference of b alpha}.
	Recalling the definition of $\sbd_{\alpha\beta\gamma,3}$, we have
	\begin{align} \label{eqchap3commonfactor:studying second term in JN2}
		\begin{aligned} 
			& \sum_{\alpha=1}^K \sum_{i \in \Nalpha} \int_0^T \Big\| \sum_{\gamma=1}^K \langle b_{\alpha\gamma,(2)}(\Xit,\Rbd_t,\cdot), (\mu^{\gamma,N}_t - \mu^\gamma_t) \rangle \Big\|^2 \, dt \\
			& \qquad = \sum_{\alpha,\beta,\gamma=1}^K \frac{1}{N_\beta N_\gamma} \sum_{i \in \Nalpha, j \in \Nbeta, k \in \Ngamma} \sbd_{\alpha\beta\gamma,3}(X^i,X^j,X^k,\Dbd).
		\end{aligned}
	\end{align}
	
	Finally we consider the crossproduct term on the right side of~\eqref{eqchap3commonfactor:square of difference of b alpha}.
	Using Lemma~\ref{lemchap3commonfactor:difference of Y},
	\begin{align*}
		& \sum_{\alpha=1}^K \sum_{i \in \Nalpha} 2 b_{\alpha,(1)}(\Xit,\Rbd_t) (Y^N_t - Y_t) \cdot \sum_{\gamma=1}^K \langle b_{\alpha\gamma,(2)}(\Xit,\Rbd_t,\cdot), (\mu^{\gamma,N}_t - \mu^\gamma_t) \rangle \\
		& \quad = \sum_{\alpha,\beta,\gamma=1}^K \frac{2}{N_\beta N_\gamma} \sum_{i \in \Nalpha, j \in \Nbeta, k \in \Ngamma} b_{\alpha,(1)}(\Xit,\Rbd_t) \sbd_{\beta,t}(X^j_{[0,t]},\Dbd_{[0,t]}) \cdot b_{\alpha\gamma,(2)}^c(\Xit,\Rbd_t,\Xkt) \\
		& \qquad + \sum_{\alpha,\gamma=1}^K \sum_{i \in \Nalpha} \Big( b_{\alpha,(1)}(\Xit,\Rbd_t) \Tmc^N_1(t) \cdot \frac{2}{N_\gamma} \sum_{k \in \Ngamma} b_{\alpha\gamma,(2)}^c(\Xit,\Rbd_t,\Xkt) \Big) \\
		& \quad \equiv \Tmc^N_4(t) + \Tmc^N_5(t).
	\end{align*}
	Using boundedness of $b_{\cdot,(1)}, b_{\cdot\cdot,(2)}$, Lemma~\ref{lemchap3commonfactor:order},~\ref{lemchap3commonfactor:expectation of Emc Smc and T1} and Cauchy-Schwarz inequality, we see that
	\begin{equation*}
		\EmbPbar \int_0^T | \Tmc^N_5(t) | \, dt \le \frac{\kappa_5}{\sqrt{N}} \to 0 \text{ as } N \to \infty.
	\end{equation*}
	For the term $\Tmc^N_4(t)$, recalling the definition of $\sbd_{\alpha\beta\gamma,2}$ and using elementary symmetry properties, we have
	\begin{align*}
		\int_0^T \Tmc^N_4(t) \, dt
		= \sum_{\alpha,\beta,\gamma=1}^K \frac{1}{N_\beta N_\gamma} \sum_{i \in \Nalpha, j \in \Nbeta, k \in \Ngamma} \sbd_{\alpha\beta\gamma,2}(X^i,X^j,X^k,\Dbd).
	\end{align*}
	Thus we have
	\begin{align}
		& \sum_{\alpha=1}^K \sum_{i \in \Nalpha} \int_0^T 2 b_{\alpha,(1)}(\Xit,\Rbd_t) (Y^N_t - Y_t) \cdot \sum_{\gamma=1}^K \langle b_{\alpha\gamma,(2)}(\Xit,\Rbd_t,\cdot), (\mu^{\gamma,N}_t - \mu^\gamma_t) \rangle \, dt \notag \\
		& \qquad = \sum_{\alpha,\beta,\gamma=1}^K \frac{1}{N_\beta N_\gamma} \sum_{i \in \Nalpha, j \in \Nbeta, k \in \Ngamma} \sbd_{\alpha\beta\gamma,2}(X^i,X^j,X^k,\Dbd) + \tilde{\Rmc}^N_2, \label{eqchap3commonfactor:estimates of forth term in JN2}
	\end{align}
	where $\tilde{\Rmc}^N_2 \to 0$ in probability as $N \to \infty$.
			
	Combining~\eqref{eqchap3commonfactor:square of difference of b alpha} -~\eqref{eqchap3commonfactor:estimates of forth term in JN2} and recalling the definition of $\sbd_{\alpha\beta\gamma}$ in~\eqref{eqchap3commonfactor:s alpha beta gamma}, we have
	\begin{align}
		J^{N,2}(T) & = \sum_{\alpha,\beta,\gamma=1}^K \frac{1}{N_\beta N_\gamma} \sum_{i \in \Nalpha, j \in \Nbeta, k \in \Ngamma} \sbd_{\alpha\beta\gamma}(X^i,X^j,X^k,\Dbd) + \tilde{\Rmc}^N_3 \notag \\
		& = \sum_{\alpha,\beta,\gamma=1}^K \frac{1}{N_\beta N_\gamma} \sum_{i \in \Nalpha, j \in \Nbeta, k \in \Ngamma} \sbd^c_{\alpha\beta\gamma}(X^i,X^j,X^k,\Dbd) + \tilde{\Rmc}^N_3 \label{eqchap3commonfactor:JN2 estimate} \\
		& \quad + \sum_{\alpha,\beta,\gamma=1}^K \frac{N_\alpha}{N_\beta N_\gamma} \sum_{(j,k) \in \Smc_{\beta\gamma}} \mbd_{\alpha\beta\gamma}(X^j,X^k,\Dbd) + \sum_{\alpha,\gamma=1}^K \frac{N_\alpha}{N_\gamma^2} \sum_{j \in \Ngamma} \mbd_{\alpha\gamma\gamma} (X^j,X^j,\Dbd), \notag
	\end{align}
	where $\mbd_{\alpha\beta\gamma}$ is as defined in~\eqref{eqchap3commonfactor:m alpha beta gamma} and $\tilde{\Rmc}^N_3 \to 0$ in probability as $N \to \infty$.
	From the boundedness of second moment of $\sbd^c_{\alpha\beta\gamma}$ (which follows from Lemma~\ref{lemchap3commonfactor:expectation of Emc Smc and T1}), conditional independence of $X^i, X^j, X^k$ for distinct indices $i \in \Nalpha, j \in \Nbeta , k \in \Ngamma$, and the fact that for all $(x,\zbd) \in \Cmb_{\Rmb^{3d+2m} \times [\Pmc(\Rmb^d)]^K}[0,T]$
	\begin{equation*}
		\EmbPbar \sbd^c_{\alpha\beta\gamma}(X^i,x^{(1)},x^{(2)},\dbd) = \EmbPbar \sbd^c_{\alpha\beta\gamma}(x,X^j,x^{(2)},\dbd_{[0,t]}) = \EmbPbar \sbd^c_{\alpha\beta\gamma}(x,x^{(1)},X^k,\dbd) = 0,
	\end{equation*}
	it follows that the first term on right side of~\eqref{eqchap3commonfactor:JN2 estimate} converges to $0$ in probability as $N \to \infty$, which completes the proof.
\end{proof}


\subsection{Combining contributions from $J^{N,1}(T)$ and $J^{N,2}(T)$} \label{secchap3commonfactor:combining contributions from JN1 and JN2}

In this section we will combine Lemmas~\ref{lemchap3commonfactor:JN1} and~\ref{lemchap3commonfactor:JN2} to study the asymptotics of the exponent on the left side of~\eqref{eqchap3commonfactor:convergence of characteristic function}.
Recall $\mbd_{\alpha\beta\gamma}$ defined in~\eqref{eqchap3commonfactor:m alpha beta gamma} and canonical maps $X_*$, $Y_*$, $\Wbar_*$, $\mubd_*$ and $\Dbd_*$ defined in Section~\ref{secchap3commonfactor:canonical processes}.
For fixed $\omegabar \in \Omega_m$, define functions $l_{\omegabar}^{\alpha,\beta\gamma} \in L^2(\Omega_d \times \Omega_d, \rho_\alpha(\omegabar,\cdot) \times \rho_\gamma(\omegabar,\cdot))$ as
\begin{equation}
	l_{\omegabar}^{\alpha,\beta\gamma}(\omega,\omega') = \frac{\lambda_\alpha}{\sqrt{\lambda_\beta \lambda_\gamma}} \mbd_{\alpha\beta\gamma}(X_*(\omega),X_*(\omega'),\Dbd_*(\omegabar)), \quad (\omega,\omega') \in \Omega_d \times \Omega_d.
\end{equation}
Let $\lhat_{\omegabar}^{\alpha,\beta\gamma} \in L^2(\Omega_d^K \times \Omega_d^K, \rhohat(\omegabar,\cdot) \times \rhohat(\omegabar,\cdot))$ be {\em lifted} versions of $l_{\omegabar}^{\alpha,\beta\gamma}$, namely
\begin{equation*}
	\lhat_{\omegabar}^{\alpha,\beta\gamma}(\omega,\omega') = \lhat_{\omegabar}^{\alpha,\beta\gamma}(\omega_\beta,\omega_\gamma'), \quad \omega = (\omega_1,\dotsc,\omega_K) \in \Omega_d^K, \omega' = (\omega_1',\dotsc,\omega_K') \in \Omega_d^K,
\end{equation*}
and let $\lhat_{\omegabar} = \sum_{\alpha,\beta,\gamma=1}^K \lhat_{\omegabar}^{\alpha,\beta\gamma}$.
Recall $\rhohat$ and functions $\hhat_{\omegabar}^{\alpha\gamma}$ introduced in Section~\ref{secchap3commonfactor:some random integral operators}.
It follows from~\eqref{eqchap3commonfactor:inner product of f} and~\eqref{eqchap3commonfactor:m alpha beta gamma} that for $\alpha, \alpha', \beta, \gamma \in \Kbd$ and $\omega = (\omega_1,\dotsc,\omega_K)$, $\omega' = (\omega_1',\dotsc,\omega_K') \in \Omega_d^K$,
\begin{align}
	& \int_{\Omega_d^K} \hhat^{\alpha\beta}_\omegabar(\omega'',\omega) \hhat^{\alpha'\gamma}_\omegabar(\omega'',\omega') \, \rhohat(\omegabar,d\omega'') \notag \\
	& \quad = \one_{\{ \alpha = \alpha' \}} \frac{\lambda_\alpha}{\sqrt{\lambda_\beta \lambda_\gamma}} \int_{\Omega_d} \int_0^T \Big( \fbd_{\alpha\beta,t}(X_{*,t}(\omega_\alpha''), X_{*,[0,t]}(\omega_\beta), \Dbd_{*,[0,t]}(\omegabar)) \notag \\
	& \qquad \cdot \fbd_{\alpha\gamma,t}(X_{*,t}(\omega_\alpha''), X_{*,[0,t]}(\omega_\gamma'), \Dbd_{*,[0,t]}(\omegabar)) \Big) \, dt \, \rho_\alpha(\omegabar,d\omega_\alpha'') \notag \\
	& \quad = \one_{\{ \alpha = \alpha' \}} \frac{\lambda_\alpha}{\sqrt{\lambda_\beta \lambda_\gamma}} \mbd_{\alpha\beta\gamma}(X_*(\omega_\beta),X_*(\omega_\gamma'),\Dbd_*(\omegabar)) \notag \\
	& \quad = \one_{\{ \alpha = \alpha' \}} \lhat_{\omegabar}^{\alpha,\beta\gamma}(\omega,\omega'). \label{eqchap3commonfactor:relation between lhat alpha beta gamma and hhat}
\end{align}
Thus, with $\hhat_\omegabar$ as in Section~\ref{secchap3commonfactor:some random integral operators}, 
\begin{equation} \label{eqchap3commonfactor:relation between lhat and hhat}
	\lhat_{\omegabar}(\omega,\omega') = \int_{\Omega_d^K} \hhat_{\omegabar}(\omega'',\omega) \hhat_{\omegabar}(\omega'',\omega') \, \rhohat(\omegabar,d\omega'').
\end{equation}

Recall the integral operators $A_{\omegabar}^{\alpha\gamma}$ defined on $\Hmc_\omegabar$ introduced in Section~\ref{secchap3commonfactor:some random integral operators}. 
Then for $\alpha$, $\alpha'$, $\beta$, $\gamma \in \Kbd$, the operators $A_\omegabar^{\alpha\beta} (A_\omegabar^{\alpha'\gamma})^* : \Hmc_\omegabar \to \Hmc_\omegabar$ are given as follows:
For $g \in \Hmc_\omegabar$ and $\omega \in \Omega_d^K$,
\begin{align*}
	A_\omegabar^{\alpha\beta} (A_\omegabar^{\alpha'\gamma})^* g(\omega) 
	& = \int_{\Omega_d^K} \Big( \int_{\Omega_d^K} \hhat_\omegabar^{\alpha\beta}(\omega',\omega) \hhat_\omegabar^{\alpha'\gamma}(\omega',\omega'') \, \rhohat(\omegabar,d\omega') \Big) g(\omega'') \, \rhohat(\omegabar,d\omega'') \\
	& = \one_{\{\alpha = \alpha'\}} \int_{\Omega_d^K} \lhat_{\omegabar}^{\alpha,\beta\gamma}(\omega,\omega'') g(\omega'') \, \rhohat(\omegabar,d\omega''),
\end{align*}
where the last equality is from~\eqref{eqchap3commonfactor:relation between lhat alpha beta gamma and hhat}.
In particular, we have $A_\omegabar^{\alpha\beta} (A_\omegabar^{\alpha'\gamma})^* = 0$ if $\alpha \ne \alpha'$. 
Moreover, it follows from the display in~\eqref{eqchap3commonfactor:relation between lhat alpha beta gamma and hhat} that for $\alpha, \beta, \gamma \in \Kbd$,
\begin{align}
	& \textnormal{Trace}(A_\omegabar^{\alpha\beta} (A_\omegabar^{\alpha\gamma})^*) \notag \\
	& \quad = \int_{\Omega_d^K \times \Omega_d^K} \hhat_\omegabar^{\alpha\beta}(\omega,\omega') \hhat_\omegabar^{\alpha\gamma}(\omega,\omega') \, \rhohat(\omegabar,d\omega) \, \rhohat(\omegabar,d\omega') \notag \\
	& \quad = \frac{\lambda_\alpha}{\sqrt{\lambda_\beta \lambda_\gamma}} \int_{\Omega_d^K} \mbd_{\alpha\beta\gamma}(X_*(\omega_\beta'),X_*(\omega_\gamma'),\Dbd_*(\omegabar)) \, \rhohat(\omegabar,d\omega') \notag \\
	& \quad = \one_{\{\beta = \gamma\}} \frac{\lambda_\alpha}{\lambda_\gamma} \int_{\Omega_d} \mbd_{\alpha\gamma\gamma}(X_*(\omega_\gamma'),X_*(\omega_\gamma'),\Dbd_*(\omegabar)) \, \rho_\gamma(\omegabar,d\omega_\gamma'), \label{eqchap3commonfactor:Trace of AA*}
\end{align}
where the last equality holds because of the centered terms in the definition of $\sbd_{\gamma,t}$ in~\eqref{eqchap3commonfactor:s gamma t} and the definition of $\sbd_{\alpha\beta\gamma}$ in~\eqref{eqchap3commonfactor:s alpha beta gamma}.
Thus Trace$(A_\omegabar^{\alpha\beta} (A_\omegabar^{\alpha\gamma})^*) = 0$ if $\beta \ne \gamma$.
Define $\tau : \Omega_m \to \Rmb$ as $\tau(\omegabar) =$ Trace$(A_{\omegabar}A_{\omegabar}^*)$, where $A_\omegabar$ is the operator introduced below~\eqref{eqchap3commonfactor:A alpha gamma}.
The following lemma is immediate from the above calculations.

\begin{lemma} \label{lemchap3commonfactor:converging to Trace}
	\begin{equation*}
		\sum_{\alpha,\gamma=1}^K \frac{N_\alpha}{N_\gamma^2} \sum_{j \in \Ngamma} \mbd_{\alpha\gamma\gamma}(X^j,X^j,\Dbd) - \tau(\Vbar) \to 0
	\end{equation*}
	in probability under $\Pmbbar^N$ as $N \to \infty$.
\end{lemma}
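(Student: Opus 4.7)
The plan is to combine the explicit trace formula derived in~\eqref{eqchap3commonfactor:Trace of AA*} with a conditional law of large numbers argument exploiting that, under $\Pmbbar^N$ and conditional on $\Gmc$, the processes $\{X^j\}_{j\in\Ngamma}$ are i.i.d.\ with common conditional law $\rho_\gamma(\Vbar,\cdot)$, while $\Dbd=(\Wbar,Y,\mubd)$ and $\Vbar$ are $\Gmc$-measurable. Since $A_\omegabar^{\alpha\beta}(A_\omegabar^{\alpha'\gamma})^*=0$ unless $\alpha=\alpha'$ and $\beta=\gamma$, applying~\eqref{eqchap3commonfactor:Trace of AA*} yields
\begin{equation*}
\tau(\omegabar)=\textnormal{Trace}(A_\omegabar A_\omegabar^*)=\sum_{\alpha,\gamma=1}^{K}\frac{\lambda_\alpha}{\lambda_\gamma}\int_{\Omega_d}\mbd_{\alpha\gamma\gamma}\big(X_*(\omega'),X_*(\omega'),\Dbd_*(\omegabar)\big)\,\rho_\gamma(\omegabar,d\omega').
\end{equation*}
Since the sum on the left side of the lemma is over a finite index set, it suffices to prove, for every fixed $\alpha,\gamma\in\Kbd$, that
\begin{equation*}
\frac{N_\alpha}{N_\gamma^2}\sum_{j\in\Ngamma}\mbd_{\alpha\gamma\gamma}(X^j,X^j,\Dbd)\;\longrightarrow\;\frac{\lambda_\alpha}{\lambda_\gamma}\int_{\Omega_d}\mbd_{\alpha\gamma\gamma}\big(X_*(\omega'),X_*(\omega'),\Dbd_*(\Vbar)\big)\,\rho_\gamma(\Vbar,d\omega')
\end{equation*}
in $\Pmbbar^N$-probability.

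First I would factor $N_\alpha/N_\gamma^2=(N_\alpha/N_\gamma)\cdot(1/N_\gamma)$ and use $N_\alpha/N_\gamma\to\lambda_\alpha/\lambda_\gamma$. The remaining task reduces to showing the conditional average
\begin{equation*}
M^N_{\alpha\gamma}\doteq\frac{1}{N_\gamma}\sum_{j\in\Ngamma}\mbd_{\alpha\gamma\gamma}(X^j,X^j,\Dbd)
\end{equation*}
converges in probability to its conditional mean $\Emb_{\Pmbbar^N}[\mbd_{\alpha\gamma\gamma}(X^j,X^j,\Dbd)\mid\Gmc]$, which equals precisely the integral $\int_{\Omega_d}\mbd_{\alpha\gamma\gamma}(X_*(\omega'),X_*(\omega'),\Dbd_*(\Vbar))\rho_\gamma(\Vbar,d\omega')$ by the disintegration of $\Pmbbar^N$ recorded just below~\eqref{eqchap3commonfactor:P N}. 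Conditional on $\Gmc$, the summands indexed by $j\in\Ngamma$ are i.i.d., so
\begin{equation*}
\Emb_{\Pmbbar^N}\big[\big(M^N_{\alpha\gamma}-\Emb_{\Pmbbar^N}[M^N_{\alpha\gamma}\mid\Gmc]\big)^2\mid\Gmc\big]\;\le\;\frac{1}{N_\gamma}\Emb_{\Pmbbar^N}\big[\mbd_{\alpha\gamma\gamma}(X^j,X^j,\Dbd)^2\mid\Gmc\big],
\end{equation*}
and taking expectations gives a bound of order $1/N_\gamma$ provided that $\mbd_{\alpha\gamma\gamma}(X^j,X^j,\Dbd)$ has finite second moment under $\Pmbbar^N$.

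The second moment bound is where the main bookkeeping lies, but no new ideas are needed: by the definition~\eqref{eqchap3commonfactor:m alpha beta gamma} of $\mbd_{\alpha\gamma\gamma}$, its integrand is a pointwise inner product of $\fbd_{\alpha\gamma,t}$'s whose components are either bounded (the $b^c_{\alpha\gamma,(2)}$ and $b_{\alpha,(1)}$ factors, by Condition~\ref{condchap3commonfactor:cond3}) or are $\sbd_{\gamma,t}$, which has finite moments of all orders by Lemma~\ref{lemchap3commonfactor:expectation of Emc Smc and T1}. Hence Cauchy--Schwarz in the $\nu_{\alpha,t}(dx')\,dt$ integration yields the required $L^2$ bound on $\mbd_{\alpha\gamma\gamma}(X^j,X^j,\Dbd)$, completing the conditional $L^2$ estimate and hence the proof.

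The main obstacle, though minor, is reconciling the definition of the conditional law of $X^j$ with the kernel $\rho_\gamma(\Vbar,\cdot)$ so that the conditional expectation of $\mbd_{\alpha\gamma\gamma}(X^j,X^j,\Dbd)$ given $\Gmc$ really equals the stated integral against $\rho_\gamma(\Vbar,\cdot)$; this follows from Remark~\ref{rmkchap3commonfactor:rmk1}(ii) together with the disintegration formula for $\Pmbbar^N$ and the $\Gmc$-measurability of $\Dbd_*(\Vbar)$, so no substantive difficulty arises.
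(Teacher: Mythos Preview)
Your proposal is correct and follows essentially the same route as the paper: both decompose $\tau(\omegabar)$ termwise via~\eqref{eqchap3commonfactor:Trace of AA*}, reduce to convergence for each fixed pair $(\alpha,\gamma)$, and invoke the conditional law of large numbers using that $\{X^j\}_{j\in\Ngamma}$ are conditionally i.i.d.\ given $\Gmc$ together with square integrability of $\mbd_{\alpha\gamma\gamma}(X^j,X^j,\Dbd)$. You supply slightly more detail (the explicit conditional variance bound and the justification of the second moment via Lemma~\ref{lemchap3commonfactor:expectation of Emc Smc and T1}), but the argument is the same.
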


\begin{proof}
	Note that for fixed $\omegabar \in \Omega_m$,
	\begin{align*}
		\tau(\omegabar) = \textnormal{Trace}(A_{\omegabar}A_{\omegabar}^*) & = \textnormal{Trace} \Big( (\sum_{\alpha=1}^K \sum_{\gamma=1}^K A_{\omegabar}^{\alpha\gamma})(\sum_{\alpha=1}^K \sum_{\gamma=1}^K A_{\omegabar}^{\alpha\gamma})^* \Big) = \sum_{\alpha=1}^K \sum_{\gamma=1}^K \textnormal{Trace}(A_{\omegabar}^{\alpha\gamma} (A_{\omegabar}^{\alpha\gamma})^*).
	\end{align*}
	It suffices to show for each pair of $\alpha, \gamma \in \Kbd$,
	\begin{equation*}
		\frac{N_\alpha}{N_\gamma^2} \sum_{j \in \Ngamma} \mbd_{\alpha\gamma\gamma}(X^j,X^j,\Dbd) - \textnormal{Trace}(A_{\Vbar}^{\alpha\gamma} (A_{\Vbar}^{\alpha\gamma})^*)
	\end{equation*}
	converges to $0$ in probability as $N \to \infty$.
	However, this property is immediate from~\eqref{eqchap3commonfactor:Trace of AA*} and the law of large numbers, since $\mbd_{\alpha\gamma\gamma}(X^j,X^j,\Dbd)$ is square integrable and conditional on $\Gmc$, $\{ X^j, j \in \Ngamma \}$ are i.i.d. with common distribution $\rho_\gamma(\Vbar,\cdot) \circ X_*^{-1}$.
\end{proof}

We will now use the results from Section~\ref{secchap2multitype:asymptotics of symmetric statistics} with $\Smb = \Omega_d^K$ and $\nu = \rhohat(\omegabar,\cdot), \omegabar \in \Omega_m$.
For each $\omegabar \in \Omega_m, k \ge 1$ and $f \in L^2_{sym}(\rhohat(\omegabar,\cdot)^{\otimes k})$ the MWI $I_k^\omegabar(f)$ is defined as in Section~\ref{secchap2multitype:asymptotics of symmetric statistics}. 
More precisely, let $\Amc^k$ be the collection of all measurable $f : \Omega_m \times (\Omega_d^K)^k \to \Rmb$ such that
\begin{equation*}
	\int_{(\Omega_d^K)^k} | f(\omegabar,\omega_1,\dotsc,\omega_k) |^2 \rhohat(\omegabar,d\omega_1) \dotsc \rhohat(\omegabar,d\omega_k) < \infty, \quad \Pbar \text{ a.e.\ } \omegabar
\end{equation*}
and for every permutation $\pi$ on $\{ 1,\dotsc,k \}$, $f(\omegabar,\omega_1,\dotsc,\omega_k) = f(\omegabar,\omega_{\pi(1)},\dotsc,\omega_{\pi(k)})$, $\Pbar \otimes \rhohat^{\otimes k}$ a.s., where $\Pbar \otimes \rhohat^{\otimes k}(d\omegabar,d\omega_1,\dotsc,d\omega_k) = \Pbar(d\omegabar) \prod_{i=1}^k \rhohat(\omegabar,d\omega_i)$.
Then there is a measurable space $(\Omega^*,\Fmc^*)$ and a regular conditional probability distribution $\lambda^* : \Omega_m \times \Fmc^* \to [0,1]$ such that on the probability space $(\Omega_m \times \Omega^*, \Bmc(\Omega_m) \otimes \Fmc^*, \Pbar \otimes \lambda^*)$, where
\begin{equation*}
	\Pbar \otimes \lambda^* (A \times B) = \int_A \lambda^* (\omegabar,B) \, \Pbar(d\omegabar), \quad A \times B \in \Bmc(\Omega_m) \otimes \Fmc^*,
\end{equation*}
there is a collection of real valued random variables $\{ I_k(f) : f \in \Amc^k, k \ge 1 \}$ with the properties that

$(a)$ For all $f \in \Amc^1$ the conditional distribution of $I_1(f)$ given $\Gmc^* = \Bmc(\Omega_m) \otimes \{ \emptyset, \Omega^* \}$ is Normal with mean $0$ and variance $\int_{\Omega_d^K} f^2(\omegabar,\omega) \, \rhohat(\omegabar,d\omega)$.

$(b)$ $I_k$ is (a.s.) linear map on $\Amc^k$.

$(c)$ For $f \in \Amc^k$ of the form
\begin{equation*}
	f(\omegabar,\omega_1,\dotsc,\omega_k) = \prod_{i=1}^k h(\omegabar,\omega_i), \quad \text{ s.t. } \quad \int_{\Omega_d^K} h^2(\omegabar,\omega) \, \rhohat(\omegabar,d\omega) < \infty, \quad \Pbar \text{ a.e.\ } \omegabar,
\end{equation*}
we have
\begin{equation*}
	I_k(f)(\omegabar,\omega^*) = \sum_{j=0}^{\lfloor k/2 \rfloor} (-1)^j C_{k,j} \Big( \int_{\Omega_d^K} h^2(\omegabar,\omega) \, \rhohat(\omegabar,d\omega) \Big)^j (I_1(h)(\omegabar,\omega^*))^{k-2j}, \quad \Pbar \otimes \lambda^* \text{ a.e.\ } (\omegabar,\omega^*)
\end{equation*}
and
\begin{equation*}
	\int_{\Omega^*} (I_k(f)(\omegabar,\omega^*))^2 \lambda^*(\omegabar,d\omega^*) = k! \Big( \int_{\Omega_d^K} h^2(\omegabar,\omega) \, \rhohat(\omegabar,d\omega) \Big)^k, \quad \Pbar \text{ a.e.\ } \omegabar,
\end{equation*}
where $C_{k,j}$ are as in~\eqref{eqchap2multitype:MWI formula}. 
We write $I_k(f)(\omegabar,\cdot)$ as $I_k^\omegabar(f)$.
With an abuse of notation, we will denote once more by $\Vbar_*$ the canonical process on $\Omega_m \times \Omega^*$, i.e.\ $\Vbar_*(\omegabar,\omega^*) = \omegabar$, for $(\omegabar,\omega^*) \in \Omega_m \times \Omega^*$.

From Lemmas~\ref{lemchap3commonfactor:JN1},~\ref{lemchap3commonfactor:JN2} and~\ref{lemchap3commonfactor:converging to Trace} it follows that
\begin{equation} \label{eqchap3commonfactor:representation of JN}
	J^N(T) = J^{N,1}(T) - \frac{1}{2} J^{N,2}(T) = \Smcbar^N - \frac{1}{2} \tau(\Vbar) + \Rmcbar^N,
\end{equation}
where
\begin{align*}
	\Smcbar^N & = \sum_{\alpha,\gamma=1}^K \frac{1}{N_\gamma} \sum_{(i,j) \in \Smc_{\alpha\gamma}} \int_0^T \fbd_{\alpha\gamma,t}(X^i_t,X^j_{[0,t]},\Dbd_{[0,t]}) \cdot d\Wit \\
	& \quad - \frac{1}{2} \sum_{\alpha,\beta,\gamma=1}^K \frac{N_\alpha}{N_\beta N_\gamma} \sum_{(j,k) \in \Smc_{\beta\gamma}} \mbd_{\alpha\beta\gamma}(X^j,X^k,\Dbd),
\end{align*}
and $\Rmcbar^N \to 0$ in probability as $N \to \infty$ under $\Pmbbar^N$.

Define $F : \Omega_m \times \Omega_d^K \times \Omega_d^K \to \Rmb$ as follows:
For $(\omegabar,\omega,\omega') \in \Omega_m \times \Omega_d^K \times \Omega_d^K$,
\begin{align*}
	F(\omegabar,\omega,\omega') & = \hhat_{\omegabar}(\omega,\omega') + \hhat_{\omegabar}(\omega',\omega) - \lhat_\omegabar(\omega,\omega') \\
	& = \hhat_{\omegabar}(\omega,\omega') + \hhat_{\omegabar}(\omega',\omega) - \int_{\Omega_d^K} \hhat_{\omegabar}(\omega'',\omega) \hhat_{\omegabar}(\omega'',\omega') \, \rhohat(\omegabar,d\omega''),
\end{align*}
where the second equality is from~\eqref{eqchap3commonfactor:relation between lhat and hhat}.
Note that $F \in \Amc^2$ and so $I_2(F)$ is a well defined random variable on $(\Omega_m \times \Omega^*, \Bmc(\Omega_m) \otimes \Fmc^*, \Pbar \otimes \lambda^*)$.
Recall the collection $\Amcbar_\alpha$, $\alpha \in \Kbd$, introduced in Section~\ref{secchap3commonfactor:CLT}.
For $\phi_\alpha \in \Amcbar_\alpha$, $m^\alpha_{\phi_\alpha}$ is defined as in~\eqref{eqchap3commonfactor:m alpha phi alpha}.
For such a $\phi_\alpha \in \Amcbar_\alpha$, $\hat{\Phi}^\alpha_\omegabar(\omega)$ is as defined in~\eqref{eqchap3commonfactor:Phi hat alpha}.
We denote $\Phibar^\alpha : \Omega_m \times \Omega_d^K \to \Rmb$ as $\Phibar^\alpha(\omegabar,\omega) = \hat{\Phi}^\alpha_\omegabar(\omega)$, namely for $\omegabar \in \Omega_m$ and $\omega = (\omega_1,\dotsc,\omega_K) \in \Omega_d^K$,
\begin{equation*}
	\Phibar^\alpha(\omegabar,\omega) = \hat{\Phi}_\omegabar^\alpha(\omega) = \phi_\alpha(X_*(\omega_\alpha)) - m^\alpha_{\phi_\alpha}(\omegabar).
\end{equation*}
Note that $\Phibar^\alpha \in \Amc^1$ and so $I_1(\Phibar^\alpha)$ is well defined.
Let for $\phi_\alpha \in \Amcbar_\alpha$, $\Vmctil^N_\alpha(\phi_\alpha)$ be as in~\eqref{eqchap3commonfactor:Vtil alpha phi alpha}.
From the definition of $\Gmc$ and $\Gmc^*$ it follows that there are maps $\Lbd_N$ and $\Lbd$ from $\Omega_m$ to $\Pmc(\Rmb^{K+1})$ such that
\begin{align*}
	\Lmc \Big( \big( \Vmctil^N_1(\phi_1), \dotsc, \Vmctil^N_K(\phi_K), \Smcbar^N \big) \Big| \Gmc \Big) & = \Lbd_N(\Vbar), \quad \Pmbbar^N \text{ a.s.}, \\
	\Lmc \Big( \big( I_1(\Phibar^1), \dotsc, I_1(\Phibar^K), \frac{1}{2} I_2(F) \big) \Big| \Gmc^* \Big) & = \Lbd(\Vbar_*), \quad \Pbar \otimes \lambda^* \text{ a.s.}
\end{align*}
From conditional independence of $\{ X^i \}$ it follows using Lemma~\ref{lemchap2multitype:unbalanced populations} that 
\begin{equation} \label{eqchap3commonfactor:convergence of conditional distribution}
	\Lbd_N(\omegabar) \to \Lbd(\omegabar) \text{ weakly, for } \Pbar \text{ a.e.\ } \omegabar.
\end{equation}
Define $\taubar : \Omega_m \times \Omega^* \to \Rmb$ as $\taubar(\omegabar,\omega^*) = \textnormal{Trace}(A_\omegabar A_\omegabar^*)$.
The following lemma is the key step.
\begin{lemma} \label{lemchap3commonfactor:convergence of V and S}
	As $N \to \infty$, $i \sum_{\alpha=1}^{K} \Vmctil^N_\alpha(\phi_\alpha) + J^{N,1}(T) - \frac{1}{2} J^{N,2}(T)$ converges in distribution to $i \sum_{\alpha=1}^{K} I_1(\Phibar^\alpha) + \frac{1}{2} I_2(F) - \frac{1}{2} \taubar$.
\end{lemma}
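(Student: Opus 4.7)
The plan is to combine the decomposition~\eqref{eqchap3commonfactor:representation of JN}, which reads $J^{N,1}(T) - \tfrac{1}{2}J^{N,2}(T) = \Smcbar^N - \tfrac{1}{2}\tau(\Vbar) + \Rmcbar^N$ with $\Rmcbar^N \to 0$ in $\Pmbbar^N$-probability, with the pointwise weak convergence of conditional laws recorded in~\eqref{eqchap3commonfactor:convergence of conditional distribution}. By Slutsky's theorem the $\Rmcbar^N$ term may be dropped, and by continuity of the map $(x,y) \mapsto ix + y$ from $\Rmb^2$ to $\mathbb{C}$ the claim reduces to showing the joint convergence
\begin{equation*}
	\Big(\sum_{\alpha=1}^K \Vmctil^N_\alpha(\phi_\alpha),\; \Smcbar^N - \tfrac{1}{2}\tau(\Vbar)\Big) \Rightarrow \Big(\sum_{\alpha=1}^K I_1(\Phibar^\alpha),\; \tfrac{1}{2} I_2(F) - \tfrac{1}{2}\taubar\Big)
\end{equation*}
as $\Rmb^2$-valued random variables.

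For this, fix any bounded continuous $\phi : \Rmb^2 \to \Rmb$. Using the conditional distribution identity $\Lmc\big((\Vmctil^N_1(\phi_1), \dotsc, \Vmctil^N_K(\phi_K), \Smcbar^N) \mid \Gmc\big) = \Lbd_N(\Vbar)$ stated just above~\eqref{eqchap3commonfactor:convergence of conditional distribution}, together with the $\Gmc$-measurability of $\tau(\Vbar)$, the tower property yields
\begin{equation*}
	\Emb_{\Pmbbar^N}\, \phi\Big(\sum_\alpha \Vmctil^N_\alpha(\phi_\alpha),\; \Smcbar^N - \tfrac{1}{2}\tau(\Vbar)\Big) = \int_{\Omega_m} \Big(\int_{\Rmb^{K+1}} \phi\big(\textstyle\sum_\alpha x_\alpha,\; s - \tfrac{1}{2}\tau(\omegabar)\big) \, \Lbd_N(\omegabar)(dx\,ds)\Big) \Pbar(d\omegabar).
\end{equation*}
For each fixed $\omegabar$ the integrand $(x,s) \mapsto \phi(\sum_\alpha x_\alpha, s - \tfrac{1}{2}\tau(\omegabar))$ is bounded continuous on $\Rmb^{K+1}$, so~\eqref{eqchap3commonfactor:convergence of conditional distribution} gives pointwise ($\Pbar$ a.e.\ $\omegabar$) convergence of the inner integral to $\int \phi(\sum_\alpha x_\alpha, s - \tfrac{1}{2}\tau(\omegabar)) \, \Lbd(\omegabar)(dx\,ds)$. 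The inner integrals are uniformly bounded by $\|\phi\|_\infty$, so bounded convergence in $\omegabar$ delivers the limit, which equals $\Emb_{\Pbar \otimes \lambda^*}\, \phi(\sum_\alpha I_1(\Phibar^\alpha), \tfrac{1}{2}I_2(F) - \tfrac{1}{2}\taubar)$ by the definition of $\Lbd(\Vbar_*)$ and the identity $\taubar(\omegabar,\omega^*) = \tau(\omegabar)$.

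All the genuine work sits upstream: the decomposition~\eqref{eqchap3commonfactor:representation of JN} is built from Lemmas~\ref{lemchap3commonfactor:JN1}--\ref{lemchap3commonfactor:converging to Trace} via a careful analysis of $Y^N - Y$ through Lemma~\ref{lemchap3commonfactor:difference of Y}, and~\eqref{eqchap3commonfactor:convergence of conditional distribution} comes from Lemma~\ref{lemchap2multitype:unbalanced populations} applied to the families $\{X^i\}_{i \in \Nalpha}$, which are i.i.d.\ conditionally on $\Gmc$. The only subtle point in the synthesis above is that $\tau$ is merely a measurable function on $\Omega_m$; this is handled by conditioning on $\omegabar$ \emph{before} invoking weak convergence, so that $\tau(\omegabar)$ enters the test function as a fixed constant rather than being composed with a continuous-mapping step applied to the joint limit on $\Rmb^{K+1} \times \Omega_m$.
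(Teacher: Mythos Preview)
Your proof is correct and follows essentially the same approach as the paper: both use the decomposition~\eqref{eqchap3commonfactor:representation of JN} to reduce to $i\sum_\alpha \Vmctil^N_\alpha(\phi_\alpha) + \Smcbar^N - \tfrac{1}{2}\tau(\Vbar)$, and both lift the pointwise weak convergence of conditional laws~\eqref{eqchap3commonfactor:convergence of conditional distribution} to unconditional convergence by conditioning on $\omegabar$ and using that $\tau$ is $\Gmc$-measurable. The paper simply packages this by defining conditional-law maps $\lbd_N, \lbd : \Omega_m \to \Pmc(\Cmb)$ and asserting $\lbd_N(\omegabar) \to \lbd(\omegabar)$ weakly a.e., whereas you spell out the tower-property and bounded-convergence mechanics explicitly.
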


\begin{proof}
	Note that from~\eqref{eqchap3commonfactor:representation of JN},
	\begin{equation*}
		i \sum_{\alpha=1}^{K} \Vmctil^N_\alpha(\phi_\alpha) + J^{N,1}(T) - \frac{1}{2} J^{N,2}(T) = i \sum_{\alpha=1}^{K} \Vmctil^N_\alpha(\phi_\alpha) + \Smcbar^N - \frac{1}{2} \tau(\Vbar) + \Rmcbar^N,
	\end{equation*}
	where $\Rmcbar^N \to 0$ in probability as $N \to \infty$.
	Let $\lbd_N$ and $\lbd$ from $\Omega_m$ to $\Pmc(\Cmb)$ be such that
	\begin{align*}
		\lbd_N(\Vbar) & = \Lmc \Big( i \sum_{\alpha=1}^{K} \Vmctil^N_\alpha(\phi_\alpha) + \Smcbar^N - \frac{1}{2} \tau(\Vbar) \Big| \Gmc \Big), \quad \Pmbbar^N \text{ a.s.}, \\
		\lbd(\Vbar_*) & = \Lmc \Big( i \sum_{\alpha=1}^{K} I_1(\Phibar^\alpha) + \frac{1}{2} I_2(F) - \frac{1}{2} \taubar \Big| \Gmc^* \Big), \quad \Pbar \otimes \lambda^* \text{ a.s.}
	\end{align*}
	It follows from~\eqref{eqchap3commonfactor:convergence of conditional distribution} and definition of $\tau$, $\taubar$ that
	\begin{equation*}
		\lbd_N(\omegabar) \to \lbd(\omegabar) \text{ weakly, for } \Pbar \text{ a.e.\ } \omegabar.
	\end{equation*}
	The desired convergence is now immediate on combining above observations.
\end{proof}


%


\subsection{Completing the proof of Theorem~\ref{thmchap3commonfactor:CLT}} \label{secchap3commonfactor:completing the proof}

It follows from Lemma~$1.2$ of~\cite{ShigaTanaka1985} (cf.\ Lemma~\ref{lemappendixchap2multitype:Shiga and Tanaka} in Appendix~\ref{Appendixchap2multitype:Restating}) and Lemma~\ref{lemchap3commonfactor:Trace} that $\Pbar$ a.s.
\begin{equation*}
	\Emb_{\Pbar \otimes \lambda^*} \Big[ \exp \Big( \frac{1}{2} I_2(F) \Big) \Big| \Gmc^* \Big] = \exp \Big( \frac{1}{2} \textnormal{Trace} (A_{\Vbar_*}(A_{\Vbar_*})^*) \Big).
\end{equation*}
Recalling the definition of $\taubar$ below~\eqref{eqchap3commonfactor:convergence of conditional distribution} it follows that
\begin{equation*}
	\Emb_{\Pbar \otimes \lambda^*} \Big[ \exp \Big( \frac{1}{2} I_2(F) - \frac{1}{2} \taubar \Big) \Big]= 1.
\end{equation*}
Also, recall that
\begin{equation*}
	\EmbPbar \Big[ \exp \Big( J^{N,1}(T) - \frac{1}{2} J^{N,2}(T) \Big) \Big] = 1.
\end{equation*}
Using Lemma~\ref{lemchap3commonfactor:convergence of V and S} along with Scheff\'e's theorem we now have as in Section~\ref{secchap2multitype:completing the proof of CLT} that
\begin{align*}
	& \lim_{N \to \infty} \EmbPbar \Big[ \exp \Big( i \sum_{\alpha=1}^K \Vmctil^N_\alpha(\phi_\alpha) + J^{N,1}(T) - \frac{1}{2} J^{N,2}(T) \Big) \Big] \\
	& = \Emb_{\Pbar \otimes \lambda^*} \Big[ \exp \Big( i \sum_{\alpha=1}^{K} I_1(\Phibar^\alpha) + \frac{1}{2} I_2(F) - \frac{1}{2} \taubar \Big) \Big] \\
	& = \Emb_{\Pbar \otimes \lambda^*} \Big[ \Emb_{\Pbar \otimes \lambda^*} \Big( \exp(i \sum_{\alpha=1}^{K} I_1(\Phibar^\alpha) + \frac{1}{2} I_2(F) - \frac{1}{2} \taubar) \Big| \Gmc_* \Big) \Big] \\
	& = \int_{\Omega_m} \exp \Big( -\frac{1}{2} \| (I - A_\omegabar)^{-1} \sum_{\alpha = 1}^K \hat{\Phi}^\alpha_\omegabar \|^2_{L^2(\Omega_d^K,\rhohat(\omegabar,\cdot))} \Big) \, \Pbar(\omegabar),
\end{align*}
where the last equality is a consequence of Lemma~$1.3$ of~\cite{ShigaTanaka1985} (cf.\ Lemma~\ref{lemappendixchap2multitype:Shiga and Tanaka} in Appendix~\ref{Appendixchap2multitype:Restating}) and Lemma~\ref{lemchap3commonfactor:Trace}.
Thus we have proved~\eqref{eqchap3commonfactor:convergence of characteristic function}, which completes the proof of Theorem~\ref{thmchap3commonfactor:CLT}. \qed


\appendix

\section{A lemma on integral operators} \label{Appendixchap2multitype:Restating}

The following result is taken from Shiga-Tanaka~\cite{ShigaTanaka1985}.
Let $\Smb$ be a Polish space and $\nu \in \Pmc(\Smb)$.
Let $a(\cdot,\cdot) \in L^2(\nu \otimes \nu)$ and denote by $A$ the integral operator on $L^2(\nu)$ associated with $a$: $A \phi(x) = \int_{\Rmb^d} a(x,y) \phi(y) \, \nu(dy)$ for $x \in \Rmb^d$ and $\phi \in L^2(\nu)$.
Then $A$ is a Hilbert-Schmidt operator.
Also, $AA^*$, and for $n \ge 2$, $A^n$, are trace class operators.
The following lemma is taken from~\cite{ShigaTanaka1985}.

\begin{lemma} \label{lemappendixchap2multitype:Shiga and Tanaka}
	Suppose that \textnormal{Trace}$(A^n)=0$ for all $n \ge 2$.
	Then $\Emb [e^{\frac{1}{2} I_2(f)}] = e^{\frac{1}{2} \textnormal{Trace} (AA^*)}$, where $f(x,y) = a(x,y) + a(y,x) - \int_{\Rmb^d} a(x,z) a(y,z) \, \nu(dz)$, and $I_2(\cdot)$ is the MWI defined as in Section~\ref{secchap2multitype:asymptotics of symmetric statistics}.
	Moreover, $I-A$ is invertible and for any $\phi \in L^2(\nu)$, $\Emb [\exp(iI_1(\phi) + \frac{1}{2} I_2(f))] = \exp[-\frac{1}{2} (\| (I-A)^{-1} \phi \|^2_{L^2(\nu)} - \textnormal{Trace} (AA^*))]$, where $I$ is the identity operator on $L^2(\nu)$.
\end{lemma}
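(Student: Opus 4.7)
The plan is to reduce both identities to a single Gaussian computation via the spectral decomposition of the symmetric Hilbert--Schmidt operator $B$ on $L^2(\nu)$ associated to the kernel $f$, identify the resulting expressions with the Carleman--Fredholm determinant $\det_2(I-B)$, and then exploit the factorization of $I-B$ together with the hypothesis on $\mathrm{Trace}(A^n)$ to evaluate this determinant.

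The starting observation is that although $a$ need not be symmetric, $f$ is symmetric by construction, and a direct algebraic check gives the key identity $I-B=(I-A)(I-A^*)$, so $I-B$ is positive semidefinite. Let $\{\psi_k\}$ be an orthonormal system of eigenfunctions of $B$ with eigenvalues $\mu_k<1$ (counted with multiplicity) and let $\phi^\perp$ denote the component of $\phi$ orthogonal to the closed span of $\{\psi_k\}$, with coefficients $c_k=\langle\phi,\psi_k\rangle$. Setting $\xi_k=I_1(\psi_k)$, the $\xi_k$ are i.i.d.\ standard normal (and jointly Gaussian with $I_1(\phi^\perp)$, which is independent of them), and the MWI identity $I_2(\psi_k\otimes\psi_l)=\xi_k\xi_l-\delta_{kl}$ yields the $L^2$-expansion $I_2(f)=\sum_k\mu_k(\xi_k^2-1)$. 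A one-dimensional Gaussian computation gives $\Emb[e^{ic\xi+\frac{\mu}{2}(\xi^2-1)}]=(1-\mu)^{-1/2}e^{-\mu/2}\exp(-c^2/(2(1-\mu)))$, so by independence
\[
\Emb\Bigl[\exp\Bigl(iI_1(\phi)+\tfrac{1}{2}I_2(f)\Bigr)\Bigr]
  \;=\;\Bigl(\prod_k\tfrac{e^{-\mu_k/2}}{\sqrt{1-\mu_k}}\Bigr)\exp\Bigl(-\tfrac{1}{2}\|\phi^\perp\|^2-\tfrac{1}{2}\sum_k\tfrac{c_k^2}{1-\mu_k}\Bigr),
\]
and the leading infinite product is precisely $\det_2(I-B)^{-1/2}$. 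The case $\phi=0$ recovers the first identity modulo evaluating $\det_2(I-B)$.

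To evaluate $\det_2(I-B)$, I would invoke the standard multiplicativity identity $\det_2((I-A)(I-A^*))=\det_2(I-A)\det_2(I-A^*)\exp(-\mathrm{Trace}(AA^*))$, valid since $A$ is Hilbert--Schmidt and $AA^*$ is trace class. The hypothesis $\mathrm{Trace}(A^n)=0$ for $n\ge 2$ is the vanishing of all power sums $p_n=\sum_k\lambda_k^n$ of the (at most countable, square-summable) eigenvalues of $A$, which by Newton's identities forces every nonzero eigenvalue of $A$ to vanish; since $\log\det_2(I-A)=-\sum_{n\ge2}p_n/n$, this yields $\det_2(I-A)=\det_2(I-A^*)=1$, hence $\det_2(I-B)=e^{-\mathrm{Trace}(AA^*)}$. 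This proves the first identity, and the absence of nonzero eigenvalues of $A$ together with compactness and the Fredholm alternative give invertibility of $I-A$.

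For the second identity it remains to show $\|\phi^\perp\|^2+\sum_k c_k^2/(1-\mu_k)=\|(I-A)^{-1}\phi\|^2_{L^2(\nu)}$. Setting $\chi_k=(I-A^*)\psi_k$, the identity $(I-A)(I-A^*)\psi_k=(1-\mu_k)\psi_k$ gives $(I-A)^{-1}\psi_k=\chi_k/(1-\mu_k)$ and $\langle\chi_k,\chi_l\rangle=(1-\mu_k)\delta_{kl}$, while on the kernel of $B$ the same identity shows $(I-A^*)$ is an isometry, so $\|(I-A)^{-1}\phi^\perp\|=\|\phi^\perp\|$ and the cross terms $\langle(I-A)^{-1}\phi^\perp,\chi_k\rangle=\langle\phi^\perp,\psi_k\rangle$ vanish; expanding $(I-A)^{-1}\phi=(I-A)^{-1}\phi^\perp+\sum_k\frac{c_k}{1-\mu_k}\chi_k$ and taking norms yields the desired identification. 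The main obstacle will be the analytic bookkeeping at the level of Hilbert--Schmidt operators, specifically the multiplicativity identity for $\det_2$ and the argument that vanishing of all higher power sums forces all eigenvalues of $A$ to be zero; once these are in place the rest is a clean Gaussian calculation.
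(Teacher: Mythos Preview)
The paper does not prove this lemma; it is stated in Appendix~\ref{Appendixchap2multitype:Restating} as a result ``taken from Shiga--Tanaka~\cite{ShigaTanaka1985}'' (Lemmas~1.2 and~1.3 there) and no argument is given. So there is no proof in the paper to compare your proposal against.

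Your proposal is essentially the standard route to this result and is correct in outline. The key algebraic identity $I-B=(I-A)(I-A^*)$ is right (with $B=A+A^*-AA^*$), and the Carleman--Fredholm determinant identification together with the multiplicativity formula is exactly how one evaluates $\Emb[e^{\frac12 I_2(f)}]$. A few points worth tightening:
\begin{itemize}
\item The step ``vanishing of all power sums forces every nonzero eigenvalue of $A$ to vanish'' should not be phrased via Newton's identities, which are for finite sums. The clean argument is that $z\mapsto\det_2(I-zA)$ is entire with $\log\det_2(I-zA)=-\sum_{n\ge2}z^n p_n/n$ near $0$; if all $p_n=0$ then $\det_2(I-zA)\equiv1$ by analytic continuation, hence has no zeros, so $A$ has no nonzero eigenvalues.
\item Logically, you should secure invertibility of $I-A$ (hence $\mu_k<1$ strictly for all $k$) \emph{before} the Gaussian computation, since $\Emb[e^{\frac{\mu}{2}\xi^2}]<\infty$ requires $\mu<1$.
\item In the second identity, what you actually use is that on $\ker B$ one has $(I-A)^{-1}\phi^\perp=(I-A^*)\phi^\perp$: this follows because $(I-A)(I-A^*)\phi^\perp=\phi^\perp$ and $I-A$ is invertible, so $(I-A^*)\phi^\perp$ is the unique preimage. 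Your statement that ``$(I-A^*)$ is an isometry'' is correct but is one step removed from the conclusion you draw; making the intermediate equality explicit would close the gap.
\end{itemize}
With these clarifications your argument goes through.
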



\section{Proof of Lemma~\ref{lemchap2multitype:convergence via approximation}} \label{Appendixchap2multitype:Proof of Lemma of convergence via approximation}

Let $f$ be a bounded Lipschitz function from $\Rmb^d$ to $\Rmb$ such that $\|f\|_\infty \le 1$ and $\|f\|_L \le 1$, where $\|\cdot\|_L$ is the Lipschitz norm.
Then 
\begin{align*}
	& |\Emb_{\Pmb_0} f(\xi_n) - \Emb_{\Pmb_0'} f(\eta)| \\
	& \quad \le |\Emb_{\Pmb_0} f(\xi_n) - \Emb_{\Pmb_0} f(\xi_{mn})| + |\Emb_{\Pmb_0} f(\xi_{mn}) - \Emb_{\Pmb_0'} f(\eta_m)| + |\Emb_{\Pmb_0'} f(\eta_m) - \Emb_{\Pmb_0'} f(\eta)| \\
	& \quad \le 2 \sup_{k \ge 1} \Emb_{\Pmb_0} \big(\|\xi_k - \xi_{mk}\| \wedge 1 \big) + |\Emb_{\Pmb_0} f(\xi_{mn}) - \Emb_{\Pmb_0'} f(\eta_m)| + 2 \Emb_{\Pmb_0'} \big( \|\eta_m - \eta\| \wedge 1 \big).
\end{align*}
Letting $n \to \infty$ gives us
\begin{align*}
	& \limsup_{n \to \infty} |\Emb_{\Pmb_0} f(\xi_n) - \Emb_{\Pmb_0'} f(\eta)| \\
	& \quad \le 2 \sup_{k \ge 1} \Emb_{\Pmb_0} \big(\|\xi_k - \xi_{mk}\| \wedge 1 \big) + \limsup_{n \to \infty} |\Emb_{\Pmb_0} f(\xi_{mn}) - \Emb_{\Pmb_0'} f(\eta_m)| + 2 \Emb_{\Pmb_0'} \big( \|\eta_m - \eta\| \wedge 1 \big) \\
	& \quad = 2 \sup_{k \ge 1} \Emb_{\Pmb_0} \big(\|\xi_k - \xi_{mk}\| \wedge 1 \big) + 2 \Emb_{\Pmb_0'} \big( \|\eta_m - \eta\| \wedge 1 \big).
\end{align*}
The result now follows on sending $m \to \infty$. \qed

\bigskip
{\bf Acknowledgements.}  Research supported in part by the National Science Foundation (DMS-1016441, DMS-1305120) and the Army Research Office (W911NF- 14-1-0331).




\bibliographystyle{plain}
\bibliography{reference}

{\sc
\bigskip
\noi
A. Budhiraja\\
Department of Statistics and Operations Research\\
University of North Carolina\\
Chapel Hill, NC 27599, USA\\
email: budhiraj@email.unc.edu
\skp

\noi
R. Wu\\
Department of Statistics and Operations Research\\
University of North Carolina\\
Chapel Hill, NC 27599, USA\\
email: wuruoyu@live.unc.edu

}

\end{document}